\documentclass[reqno]{amsart}
\pdfoutput=1

\usepackage{thmtools}
\usepackage{amssymb}
\usepackage[hyphens]{url}
\usepackage[breaklinks=true,unicode]{hyperref}
\usepackage{amsmath} 
\usepackage{mathtools} 
\usepackage{todonotes}
\usepackage{tikz-cd}
\usepackage{mathrsfs} 
\usepackage{stmaryrd} 
\usepackage{cleveref}
\usepackage{enumitem}
\usepackage[top=3.5cm,bottom=3.5cm,left=4cm,right=4cm]{geometry}

\usepackage{scalerel}
\usepackage{stackengine}
\newlength\arrowheight

\newcommand\doubledownarrow{%
  \mathrel{\ThisStyle{%
    \setlength{\arrowheight}{\heightof{$\SavedStyle\downarrow$}}%
    \scalerel*{\stackengine{.3\arrowheight}{$\SavedStyle\downarrow$}%
      {$\SavedStyle\downarrow$}{O}{c}{F}{F}{L}}{\downarrow}}
}}

\expandafter\def\expandafter\UrlBreaks\expandafter{\UrlBreaks
  \do\a\do\b\do\c\do\d\do\e\do\f\do\g\do\h\do\i\do\j%
  \do\k\do\l\do\m\do\n\do\o\do\p\do\q\do\r\do\s\do\t%
  \do\u\do\v\do\w\do\x\do\y\do\z\do\A\do\B\do\C\do\D%
  \do\E\do\F\do\G\do\H\do\I\do\J\do\K\do\L\do\M\do\N%
  \do\O\do\P\do\Q\do\R\do\S\do\T\do\U\do\V\do\W\do\X%
  \do\Y\do\Z}



\hypersetup{
  linkcolor  = black, 
  citecolor  = magenta!90!black, 
  urlcolor   = black, 
  colorlinks = true,
}


\mathchardef\mhyphen="2D 
\newcommand{\op}{\mathrm{op}}

\renewcommand{\phi}{\varphi}
\renewcommand{\theta}{\vartheta}
\renewcommand{\epsilon}{\varepsilon}

\newcommand{\1}{\mathbf{1}} 

\newcommand{\D}{D} 

\renewcommand{\inf}{\bigwedge}
\renewcommand{\sup}{\bigvee}

\newcommand{\dd}{{\doubledownarrow}\,}

\renewcommand{\d}{{\downarrow}\,}
\renewcommand{\u}{{\uparrow}\,}

\newcommand{\repi}{\twoheadrightarrow}
\newcommand{\emb}{\rightarrowtail}

\newcommand{\cat}[1]{\mathsf{#1}}

\newcommand{\Set}{\cat{Set}}

\newcommand{\CRing}{\cat{CRing}}
\newcommand{\CompOrd}{\cat{CompOrd}}
\DeclareMathOperator{\RegEpi}{RegEpi}

\DeclareMathOperator{\KSh}{Sh_{\mathcal{K}}} 
\DeclareMathOperator{\OSh}{Sh_{\Omega}} 
\newcommand{\KShr}[1]{\mathrm{Sh}^{#1}_{\mathcal{K}}} 
\newcommand{\OmShr}[1]{\mathrm{Sh}^{#1}_{\Omega}} 
\newcommand{\sKShr}[1]{\mathrm{s\mhyphen Sh}^{#1}_{\mathcal{K}}} 
\newcommand{\sOmShr}[1]{\mathrm{s\mhyphen Sh}^{#1}_{\Omega}} 

\DeclareMathOperator{\Lan}{Lan}
\DeclareMathOperator{\Ran}{Ran}

\newcommand{\ksat}{\mathcal{K}}
\DeclareMathOperator{\Equiv}{Equiv}
\DeclareMathOperator{\Sub}{Sub}
\DeclareMathOperator{\Quo}{Quo}

\DeclareMathOperator*{\colim}{colim}
\DeclareMathOperator{\coeq}{coeq}

\newcommand{\ros}[3]{{#1}_{{#2},{#3}}} 

\DeclareMathOperator{\ID}{Id}
\DeclareMathOperator{\RID}{RId}
\DeclareMathOperator{\JRID}{JRId}
\DeclareMathOperator{\Max}{Max}

\newcommand{\Filt}{\mathrm{Filt}}
\newcommand{\Scott}{\sigma\mathrm{Filt}}
\newcommand{\F}{\mathcal{F}}

\newcommand{\K}{\mathcal{K}}



\declaretheorem[name=Theorem, refname={Theorem,Theorems}, Refname={Theorem,Theorems}, numberwithin=section]{theorem}

\declaretheorem[name=Theorem, numbered = no]{theorem*}

\declaretheorem[name=Proposition, refname={Proposition,Propositions}, Refname={Proposition,Propositions}, sibling=theorem]{proposition}

\declaretheorem[name=Proposition, numbered = no]{proposition*}

\declaretheorem[name=Lemma, refname={Lemma,Lemmas}, Refname={Lemma,Lemmas}, sibling=theorem]{lemma}

\declaretheorem[name=Lemma, numbered = no]{lemma*}

\declaretheorem[name=Corollary, refname={Corollary,Corollaries}, Refname={Corollary,Corollaries}, sibling=theorem]{corollary}

\declaretheorem[name=Corollary, numbered = no]{corollary*}


\declaretheorem[name=Definition, refname={Definition,Definitions}, Refname={Definition,Definitions}, sibling=theorem, style=definition]{definition}


\declaretheorem[name=Example, refname={Example,Examples}, Refname={Example,Examples}, sibling=theorem, style=definition]{example}


\declaretheorem[name=Remark, refname={Remark,Remarks}, Refname={Remark,Remarks}, sibling=theorem, style=remark]{remark}

\declaretheorem[name=Remark, style=remark, numbered=no]{remark*}


\declaretheorem[name=Claim, numbered=no, style=remark]{claim*}



\crefname{section}{Section}{Sections}
\Crefname{section}{Section}{Sections}

\crefname{axiom}{Axiom}{Axioms}
\Crefname{axiom}{Axiom}{Axioms}

\crefname{item}{Item}{Items}
\Crefname{item}{Item}{Items}


\title{Barr-Exact Categories and Soft Sheaf Representations}

\author{Marco Abbadini}
\address{Dipartimento di Matematica, Universit\`{a} degli Studi di Salerno, Italy}\email{mabbadini@unisa.it}

\author{Luca Reggio}
\address{Department of Computer Science, University College London, United Kingdom}
\email{l.reggio@ucl.ac.uk}

\thanks{Research supported by the European Union's Horizon 2020 research and innovation programme under the Marie Sk{\l}odowska-Curie grant agreement No 837724 and by the Italian Ministry of University and Research through the PRIN project n.\ 20173WKCM5 \emph{Theory and applications of resource sensitive logics}.}

\keywords{Soft sheaves, sheaf representations, regular categories, Barr-exact categories, $\mathcal{K}$-sheaves, sheaves over complete lattices.}
\subjclass[2020]{Primary~54B40; Secondary~18E08, 16S60, 18F20.}
%
%
%
%

\begin{document}

\begin{abstract}
It has long been known that a key ingredient for a sheaf representation of a universal algebra $A$ consists in a distributive lattice of commuting congruences on~$A$. The sheaf representations of universal algebras (over stably compact spaces) that arise in this manner have been recently characterised by Gehrke and van Gool (J.~Pure Appl.~Algebra, 2018), who identified the central role of the notion of \emph{softness}. 

In this paper, we extend the scope of this theory by replacing varieties of algebras with Barr-exact categories, thus encompassing a number of ``non-algebraic'' examples. Our approach is based on the notion of \emph{$\ksat$-sheaf}: intuitively, whereas sheaves are defined on open subsets, $\ksat$-sheaves are defined on compact ones. Throughout, we consider sheaves on complete lattices rather than spaces; this allows us to obtain point-free versions of sheaf representations whereby spaces are replaced with frames.

These results are used to construct sheaf representations for the dual of the category of compact ordered spaces, and to recover Banaschewski and Vermeulen's point-free sheaf representation of commutative Gelfand rings (Quaest.~Math., 2011).
\end{abstract}

\maketitle

\section{Introduction}
Sheaf representations of universal algebras have been investigated since the 1970s, see e.g.\ \cite{Comer1971,Cornish1977,Davey1973,Keimel1970,Wolf1974}, inspired by several results for rings and modules obtained in the 1960s, see e.g.\ \cite{DH1966,DH1968,EGA1,Pierce1967}. 
In particular it was observed that, for a universal algebra~$A$, any distributive lattice of pairwise commuting congruences on $A$ induces a sheaf representation of $A$, i.e.\ a sheaf whose algebra of global sections is isomorphic to~$A$~\cite{Wolf1974}.
The sheaf representations over \emph{stably compact} spaces~\cite{Lawson2011} arising in this way were characterised by Gehrke and van Gool~\cite{GehrkeGool2018}, who recognised the key role of the notion of softness---which originated with Godement's treatment of homological algebra~\cite{Godement1958}. A sheaf over a space $X$ is \emph{soft} if, for all compact saturated\footnote{A subset of a topological space is \emph{saturated} if it is an intersection of open sets. In a $T_1$ space, all subsets are saturated.} subsets $K\subseteq X$, every (continuous) local section over $K$ can be extended to a (continuous) global section. In~\cite{GehrkeGool2018}, a bijection was established between isomorphism classes of soft sheaf representations of an algebra $A$ over a stably compact space $X$, and frame homomorphisms from the \emph{co-compact dual} frame of $X$ to a frame of pairwise commuting congruences on $A$.

A sheaf representation of an algebra can be regarded as a generalisation of a representation in terms of continuous functions. For example, the Gelfand--Naimark theorem~\cite{GN1943} states that for every commutative unital $\mathrm{C}^*$-algebra $A$ there is an isomorphism
\[
A\cong \mathrm{C}(\Max{A},\mathbb{C})
\]
where $\mathrm{C}(\Max{A},\mathbb{C})$ is the $\mathrm{C}^*$-algebra of all continuous complex-valued functions on the maximal spectrum $\Max{A}$ of $A$. More precisely, it provides a characterisation of the image of the embedding
\[
A \rightarrowtail \prod_{\mathfrak{m}\in\Max{A}}{A/{\mathfrak{m}}}, \ \ \ a\mapsto (a/{\mathfrak{m}})_{\mathfrak{m}\in\Max{A}}
\]
where each $A/{\mathfrak{m}}$ is an isomorphic copy of $\mathbb{C}$.
Sheaf representations extend these ideas to a wider class of rings---and, more generally, universal algebras---by allowing the factors in the direct product to vary ``continuously''.

Thus, disregarding the topological constraint of continuity of global sections, sheaf representations of universal algebras are akin to embeddings into direct products. From this standpoint, the softness condition for sheaves is related to a basic concept of universal algebra, namely that of \emph{subdirect representation}. In fact, any sheaf representation $F$ of a universal algebra $A$ on a space $X$ induces an embedding
\[
\nu\colon A \emb \prod_{x\in X}{F_x}
\]
where $F_x$ is the stalk of $F$ at $x$. If $F$ is soft then $\nu$ is a \emph{subdirect embedding}, i.e.\ for all $y\in X$ the composition of $\nu$ with the product projection $\prod_{x\in X}{F_x}\repi F_{y}$ is surjective.

In this article, we generalise Gehrke and van Gool's characterisation of soft sheaf representations by replacing varieties of finitary algebras---in which sheaves take values---with any \emph{Barr-exact} category.
Barr-exact categories, introduced in~\cite{BarrGrilletOsdol1971}, are a non-additive generalisation of Abelian categories. Examples of Barr-exact categories include most ``algebraic-like'' categories such as varieties of (possibly infinitary) algebras, any topos, the category of compact Hausdorff spaces and its opposite category.

This allows us to construct soft sheaf representations of all objects in the category $\CompOrd^\op$ opposite to the category of compact ordered spaces and continuous monotone maps. This category can be regarded as an extension of the variety $\cat{DLat}$ of bounded distributive lattices, in the sense that $\CompOrd^\op$ admits a full subcategory equivalent to $\cat{DLat}$ (this follows from Priestley duality between bounded distributive lattices and totally order-disconnected compact ordered spaces~\cite{Priestley1970}). The category $\CompOrd^\op$ is Barr-exact and even equivalent to a variety of algebras, but not a finitary one.

We hasten to point out that, while the intended application of our results concerns Barr-exact categories, we develop the theory more generally for \emph{regular} categories~\cite{BarrGrilletOsdol1971}. Examples of regular categories that are not Barr-exact include quasi-varieties of (possibly infinitary) algebras and the category of Boolean (i.e., compact, Hausdorff and zero-dimensional) spaces and continuous maps. 

Barring some examples and applications, we always work with sheaves over complete lattices. The usual notion of a sheaf on a space $X$ is recovered by considering sheaves over the frame of opens of $X$, but this approach accommodates also point-free sheaf representations by taking sheaves on possibly non-spatial frames. 
For example, we illustrate how Banaschewski and Vermeulen's sheaf representation of commutative Gelfand rings on compact regular frames~\cite{BV2011} can be recovered as a special case of our results.

Our approach crucially relies on the notion of \emph{$\ksat$-sheaf}. In the spatial setting, a $\ksat$-sheaf can be thought of as a ``sheaf defined on compact saturated subsets'' (or equivalently, in the case of $T_1$ spaces, on compact subsets) instead of open ones. The concept of $\ksat$-sheaf essentially originates with Leray's pioneering work on sheaves and sheaf cohomology~\cite{Leray1945} (see also~\cite[Chapter IV, \S 7.B]{Dieudonne2009}) and has been fruitfully employed by Lurie in the theory of $\infty$-categories, cf.~\cite[\S 7.3.4]{Lurie2009}.

\subsection*{Outline}
In \cref{s:prelim}, we recall the basic definitions and properties pertaining to the theory of regular and Barr-exact categories. $\K$-sheaves over a complete lattice, with values in a regular category, are introduced in \cref{K-sheaves}. In \cref{s:soft-K-sheaf-repr} the notion of softness for $\K$-sheaves is defined and, for any object of a regular category, an isomorphism is established between a category of soft $\ksat$-sheaf representations and a category of monotone maps preserving finite infima and arbitrary suprema (\cref{th:equiv-soft-sheaf-repr}). 

\cref{s:domains} contains some background material on domains and algebraic lattices that is needed in \cref{s:K-vs-Omega} to show that, under appropriate assumptions, $\K$-sheaves are equivalent to ordinary sheaves (\cref{equiv-Om-K-sheaves}). This equivalence is then extended to (soft) sheaf representations and leads to our main result, \cref{t:gen}. Finally, in \cref{s:examples}, these results are applied to study soft sheaf representations of commutative Gelfand rings and of objects in the dual of the category of compact ordered spaces.

\subsection*{Notation and terminology}
A poset $P$ is \emph{directed} if each of its finite subsets has an upper bound; equivalently, if it is non-empty and any two of its elements admit an upper bound. A subset $\D\subseteq P$ is said to be directed if it satisfies the previous condition with respect to the order induced by $P$. The order-dual notion is that of \emph{codirected} subset.

We often identify a preordered set $S$ with the (small) category whose set of objects is $S$ and such that, for all $s,t\in S$, there is exactly one morphism $s\to t$ if $s\leq t$, and there is no morphism otherwise. More generally, categories in which there is at most one morphism between any two objects are identified with (possibly large) preorders. 

Arrows $\emb$ and $\repi$ denote, respectively, monomorphisms and regular epimorphisms. The terminal object in a category, if it exists, is denoted by $\1$.

Whenever $\cat{C}$ is a category, $\cat{C}^\op$ denotes the \emph{opposite category} obtained by reversing the direction of arrows in $\cat{C}$. This applies in particular when $\cat{C}$ is a poset, e.g.\ the complete lattice $\Omega(X)$ of open subsets of a topological space $X$, ordered by set-theoretic inclusion.

We write $\Omega$ for the contravariant functor from the category of topological spaces and continuous maps, to the category of frames and their homomorphisms, that sends a continuous map $f\colon X\to Y$ to the frame homomorphism $\Omega(f)\coloneqq f^{-1}\colon \Omega(Y)\to\Omega(X)$.

A \emph{$\cat{C}$-valued presheaf} on a poset $P$ is a functor
$
F \colon P^\op \to \cat{C}.
$ 
If $p,q\in P$ satisfy $p\leq q$, the image under $F$ of the unique arrow $q\to p$ in $P^\op$ is denoted by $\ros{F}{q}{p}\colon F(q)\to F(p)$.

\section{Preliminaries on Regular Categories}\label{s:prelim}
We recall some basic definitions and facts concerning regular and Barr-exact categories.

\subsection{Subobjects and quotients} Consider an arbitrary category $\cat{C}$ and an object $A$ of~$\cat{C}$. The collection of all monomorphisms in $\cat{C}$ with codomain $A$ carries a natural preorder~$\leq$ defined as follows. Given monomorphisms $m\colon S\emb A$ and $n\colon T\emb A$, set
\[
m \leq n \ \ \Longleftrightarrow \ \ \exists \, l. \ m=n\circ l \hspace{2em}
\begin{tikzcd}
T \arrow[rightarrowtail]{r}{n} & A \\
S \arrow[rightarrowtail]{ur}[swap]{m} \arrow[dashed]{u}{l} & {}	
\end{tikzcd}
\]
(Note that, if it exists, such an $l$ is a monomorphism.) The symmetrization $\sim$ of the preorder $\leq$ can be characterised explicitly as follows: $m\sim n$ if, and only if, there exists an isomorphism $l$ such that $m=n\circ l$. A \emph{subobject} of $A$ is a $\sim$-equivalence class of monomorphisms with codomain $A$, and the collection of all subobjects of $A$ is denoted by $\Sub{A}$. The preorder $\leq$ induces a partial order on $\Sub{A}$, which we denote again by $\leq$. As $A$ may admit a proper class of subobjects, in general $\Sub{A}$ is a large poset.

In the same fashion, we can define the (large) poset of quotients of an object $A$. To this end, we introduce a preorder on the class of all regular epimorphisms with domain $A$. We use again the symbol $\leq$ for this preorder; it will be clear from the context to which (pre)order we are referring. Given regular epimorphisms $f\colon A \repi B$ and $g\colon A \repi C$, set
\[
f \leq g \ \ \Longleftrightarrow \ \ \exists \, h. \ f=h\circ g \hspace{2em}
\begin{tikzcd}
A \arrow[two heads]{r}{g}\arrow[swap, two heads]{rd}{f}	& C \arrow[dashed]{d}{h}\\
& B
\end{tikzcd}
\]
(Note that, if it exists, such an $h$ is an epimorphism, but need not be a regular epimorphism unless $\cat{C}$ admits (regular epi, mono) factorisations.)
As before, the symmetrization~$\sim$ of the preorder $\leq$ can be characterised explicitly by: $f\sim g$ if, and only if, there exists an isomorphism $h$ such that $f=h\circ g$. A \emph{quotient} of $A$ is a $\sim$-equivalence class of regular epimorphisms with domain $A$, and the collection of all quotients of $A$ is denoted by $\Quo{A}$. The preorder $\leq$ induces a partial order on $\Quo{A}$, that we shall denote again by~$\leq$. Equivalently, $\Quo{A}$ can be identified with the poset of \emph{regular} subobjects of $A$ in the opposite category $\cat{C}^\op$.

We will often work with the category \[\RegEpi{A}\] whose objects are regular epimorphisms in $\cat{C}$ with domain $A$. For any two regular epimorphisms $f\colon A \repi B$ and $g\colon A \repi C$, an arrow $f\to g$ in $\RegEpi{A}$ is an arrow $h\colon B\to C$ in $\cat{C}$ such that $h\circ f = g$. In other words, $\RegEpi{A}$ is a full subcategory of the coslice category $A/{\cat{C}}$. Note that, since each object of $\RegEpi{A}$ is an epimorphism in $\cat{C}$, the category $\RegEpi{A}$ is a (large) preorder.
The poset reflection of $\RegEpi{A}$ coincides with the \emph{opposite} of $\Quo{A}$.

\subsection{Regular categories}

\begin{definition}
A category $\cat{C}$ is \emph{regular} if it satisfies the following conditions:
\begin{enumerate}[label=(\roman*)]
\item $\cat{C}$ has finite limits.
\item $\cat{C}$ has (regular epi, mono) factorisations, i.e.\ every arrow $f$ in $\cat{C}$ can be written as $f=m\circ e$ where $e$ is a regular epimorphism and $m$ a monomorphism.
\item Regular epimorphisms in $\cat{C}$ are stable under pullbacks along any morphism.
\end{enumerate}
\end{definition}

Note that, because every regular epimorphism is a strong epimorphism, for every commutative square
\[\begin{tikzcd}
A \arrow[twoheadrightarrow]{r} \arrow{d} & B \arrow{d} \\
C \arrow[rightarrowtail]{r} & D 
\end{tikzcd}\]
there exists a (unique) \emph{diagonal filler}, i.e.\ an arrow $B\to C$ making the ensuing triangles commute. Although we shall not need this fact, let us mention that in any regular category the strong and regular epimorphisms coincide; see e.g.\ \cite[Proposition~1.4, p.~129]{BarrGrilletOsdol1971}.

Some useful consequences of the axioms for a regular category are collected in the following lemma; these properties rely on the fact that the (regular epi, mono) factorisation system in a regular category is orthogonal, proper and stable. Cf.\ e.g.~\cite{freyd1972categories} or \cite[\S1.5]{FreydScedrov1990}.
\begin{lemma}\label{regular-properties}
The following statements hold in any regular category $\cat{C}$:
\begin{enumerate}[label=(\alph*)]
\item\label{composition} The composition of regular epimorphisms is again a regular epimorphism.
\item\label{cancellation} If $f\circ g$ is a regular epimorphism, then so is $f$.
\item\label{pullback-pushout} Any pullback square consisting entirely of regular epimorphisms is also a pushout square.
\end{enumerate}
\end{lemma}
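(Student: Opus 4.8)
I would dispatch (a) and (b) first, since both rest on a single elementary fact: \emph{a monomorphism that is a split epimorphism is an isomorphism}. Together with the observation recorded above---that every regular epimorphism is strong and therefore lifts diagonally against monomorphisms---this renders both statements routine. For (a), given regular epimorphisms $e_1\colon A\repi B$ and $e_2\colon B\repi C$, I would take a (regular epi, mono) factorisation $e_2\circ e_1=m\circ e$ and aim to show the monomorphism $m$ is invertible. Lifting $e_1$ against $m$ in the square $m\circ e=e_2\circ e_1$ produces an arrow $d$ with $m\circ d=e_2$; lifting $e_2$ against $m$ in the square $m\circ d=e_2$ then produces a section of $m$. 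Hence $m$ is a monic split epimorphism, so an isomorphism, and $e_2\circ e_1=m\circ e$ is regular. For (b), writing $f=m\circ e$ with $e$ regular and $m$ monic, the hypothesis makes $m\circ(e\circ g)=f\circ g$ a strong epimorphism; lifting it against $m$ again exhibits a section of $m$, so $m$ is invertible and $f=m\circ e$ is regular.

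The real content is (c), where the plan is to use kernel pairs. Consider a pullback square with regular epimorphisms $f\colon A\repi C$ and $g\colon B\repi C$ and projections $p_1\colon P\repi A$, $p_2\colon P\repi B$ (so that $f\circ p_1=g\circ p_2$), and a cocone $u\colon A\to Z$, $v\colon B\to Z$ with $u\circ p_1=v\circ p_2$. Let $r_1,r_2\colon R\rightrightarrows A$ be the kernel pair of $f$, so that $f=\coeq(r_1,r_2)$. The crux is to prove $u\circ r_1=u\circ r_2$: granting this, the coequaliser property of $f$ yields a unique $w\colon C\to Z$ with $w\circ f=u$; uniqueness of the mediating arrow is immediate because $f$ is epic; and the remaining identity $w\circ g=v$ follows by cancelling the epimorphism $p_2$ from $w\circ g\circ p_2=w\circ f\circ p_1=u\circ p_1=v\circ p_2$.

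To establish $u\circ r_1=u\circ r_2$ I would form the pullback $Q\coloneqq R\times_C B$ of $f\circ r_1$ along $g$, with projections $q\colon Q\to R$ and $q'\colon Q\to B$. Since $f\circ r_1=f\circ r_2$, both $\langle r_1\circ q,q'\rangle$ and $\langle r_2\circ q,q'\rangle$ are well-defined arrows $Q\to P$, and composing each with the equation $u\circ p_1=v\circ p_2$ gives $u\circ r_1\circ q=v\circ q'=u\circ r_2\circ q$. The main obstacle---and the only point at which a genuinely regular-categorical hypothesis is used---is the final cancellation: to pass from $u\circ r_1\circ q=u\circ r_2\circ q$ to $u\circ r_1=u\circ r_2$ one needs $q$ to be an epimorphism. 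This is precisely where stability of regular epimorphisms under pullback enters, for $q$ is the pullback of the regular epimorphism $g$ along $f\circ r_1$ and is thus itself a regular epimorphism. By contrast, (a) and (b) make no use of stability and are formal consequences of the factorisation system alone.
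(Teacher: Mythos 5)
Your proof is correct. The paper itself does not argue the lemma at all: it disposes of \cref{composition} and \cref{cancellation} by citing \cite[Propositions~1.10 and~1.11]{BarrGrilletOsdol1971}, and of \cref{pullback-pushout} by citing the dual of Ringel's result on amalgamations \cite{Ringel1972} (alternatively \cite[\S1.565]{FreydScedrov1990} or \cite[Remark~5.3]{CKP1993}). What you supply is the standard self-contained argument behind those references, and it is sound. For \cref{composition} and \cref{cancellation} you correctly isolate the fact that only the orthogonal (regular epi, mono) factorisation system is needed: factor the composite, use the diagonal-filler property (which the paper records just before the lemma) to split the mono part, and invoke ``monic split epi implies iso''; the only step left tacit is that an isomorphism composed with a regular epimorphism is again a regular epimorphism, which is immediate from the coequaliser universal property. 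For \cref{pullback-pushout} your argument is the classical one: a regular epimorphism $f$ is the coequaliser of its kernel pair $r_1,r_2\colon R\rightrightarrows A$ (true in any category with kernel pairs), so it suffices to show $u\circ r_1=u\circ r_2$, and this is where pullback-stability genuinely enters---the comparison arrows $\langle r_1\circ q,q'\rangle,\langle r_2\circ q,q'\rangle\colon Q\to P$ out of the pullback $Q=R\times_C B$ give $u\circ r_1\circ q=v\circ q'=u\circ r_2\circ q$, and $q$, being a pullback of the regular epimorphism $g$, is epic and can be cancelled. The remaining identities ($w\circ g=v$ by cancelling the regular epimorphism $p_2$, uniqueness of $w$ since $f$ is epic) are handled correctly. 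Your closing observation about where stability is and is not used is accurate and is a worthwhile addition to what the paper's bare citations convey.
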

\begin{proof}
For~\cref{composition,cancellation}, see~\cite[Propositions~1.10 and~1.11, p.~133]{BarrGrilletOsdol1971}. \Cref{pullback-pushout} follows from (the dual of) the main result of \cite{Ringel1972} (cf.\ also \cite[\S1.565]{FreydScedrov1990} or \cite[Remark~5.3]{CKP1993}).
\end{proof}

Whenever $A$ is an object of a regular category $\cat{C}$, the preorder $\RegEpi{A}$ admits finite infima. Just observe that an infimum of a finite set of regular epimorphisms 
\[\{f_i\colon A\repi B_i\mid i\in I\}\]
is given by the (regular epi, mono) factorisation of the induced morphism $A\to \prod_{i\in I}{B_i}$.

$\RegEpi{A}$ has also a minimum, namely the identity of $A$; in fact, $A\repi B$ is a minimum in $\RegEpi{A}$ if, and only if, it is an isomorphism. However, non-empty suprema in $\RegEpi{A}$ may fail to exist. For the next lemma, recall that the pushout of a regular epimorphism---if it exists---is again a regular epimorphism.

\begin{lemma}\label{l:binary-infima}
Let $\cat{C}$ be a regular category, $A$ an object of $\cat{C}$, and $f \colon A \repi B$ and $g \colon A \repi C$ regular epimorphisms. Then $f$ and $g$ admit a supremum in $\RegEpi{A}$ if, and only if, they admit a pushout in $\cat{C}$.
\begin{equation}\label{eq:pushout-inf}
\begin{tikzcd}
A \arrow[twoheadrightarrow]{r}{f} \arrow[twoheadrightarrow]{d}[swap]{g} & B \arrow[twoheadrightarrow, dashed]{d}{\eta_1} \\
C \arrow[twoheadrightarrow, dashed]{r}{\eta_2} & H \arrow[ul, phantom, "\ulcorner", very near start]
\end{tikzcd}
\end{equation}
In that case, the composite $\eta_1 \circ f = \eta_2 \circ g$ is a supremum of $f$ and $g$.
\end{lemma}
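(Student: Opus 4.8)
The plan is to prove the two implications separately, showing in each case that a pushout square \eqref{eq:pushout-inf} and a supremum determine one another through the diagonal $\eta_1\circ f=\eta_2\circ g$. The conceptual point is that an upper bound of $f$ and $g$ in $\RegEpi{A}$ is precisely a regular epimorphism $s\colon A\repi S$ admitting factorisations $s=\alpha\circ f=\beta\circ g$, and a supremum is a least such; the universal property of a pushout says almost the same thing, except that it must be tested against \emph{arbitrary} morphisms out of $B$ and $C$, not merely regular epimorphisms. Reconciling these two universal properties is the heart of the argument, and the regularity of $\cat{C}$ is exactly what bridges the gap.

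For the direction from pushouts to suprema, I would assume that \eqref{eq:pushout-inf} exists and set $s\coloneqq\eta_1\circ f=\eta_2\circ g$. Since $\eta_1$ and $\eta_2$ are pushouts of the regular epimorphisms $g$ and $f$, the remark preceding the lemma makes them regular epimorphisms, so $s$ is a composite of regular epimorphisms and hence lies in $\RegEpi{A}$ (composition of regular epimorphisms is a regular epimorphism, \cref{regular-properties}). The equalities $s=\eta_1\circ f$ and $s=\eta_2\circ g$ exhibit arrows $f\to s$ and $g\to s$ in $\RegEpi{A}$, so $s$ is an upper bound; and if $t=u\circ f=v\circ g$ is any other upper bound, the universal property of the pushout yields the unique $w\colon H\to T$ with $w\circ s=t$, which is precisely an arrow $s\to t$ in $\RegEpi{A}$. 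Hence $s$ is a supremum, and this already establishes the ``in that case'' clause.

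The reverse implication is the delicate one. Assume a supremum $s\colon A\repi S$ exists, with factorisations $s=\alpha\circ f=\beta\circ g$ witnessing that it is an upper bound; I claim the square with legs $\alpha$ and $\beta$ is a pushout of $f$ and $g$. Given a test cocone $u\colon B\to T$, $v\colon C\to T$ with $u\circ f=v\circ g\eqqcolon t$, the morphism $t$ need not be a regular epimorphism, so it cannot be fed directly into the supremum's universal property. The fix is to pass to the (regular epi, mono) factorisation $t=m\circ e$ with $e\colon A\repi P$ and $m\colon P\emb T$. Using that $f$ and $g$ are (strong) epimorphisms while $m$ is a monomorphism, the diagonal-filler property applied to the commuting squares $m\circ e=u\circ f$ and $m\circ e=v\circ g$ produces maps $d,d'$ with $e=d\circ f=d'\circ g$; thus $e$ is itself an upper bound of $f$ and $g$. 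As $s$ is the \emph{least} upper bound, there is $k\colon S\to P$ with $k\circ s=e$, and I would set $w\coloneqq m\circ k$. Cancelling the epimorphisms $f$ and $g$ in the identities $k\circ\alpha\circ f=e=d\circ f$ and $k\circ\beta\circ g=e=d'\circ g$ gives $k\circ\alpha=d$ and $k\circ\beta=d'$, whence $w\circ\alpha=m\circ d=u$ and $w\circ\beta=m\circ d'=v$. Uniqueness of $w$ follows because $\alpha$ is a regular epimorphism, by the cancellation property \cref{regular-properties} applied to $s=\alpha\circ f$, hence epic. This shows the square is a pushout, so $f$ and $g$ admit a pushout, and its diagonal is $\alpha\circ f=s$, recovering the supremum up to the canonical isomorphism of pushouts.

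As flagged, the main obstacle is the mismatch between the two universal properties: the supremum in $\RegEpi{A}$ only ``sees'' regular epimorphisms, whereas a pushout must mediate arbitrary cocones. The whole weight of the proof therefore falls on the reverse implication and, within it, on the device of taking the (regular epi, mono) factorisation of the test map $t$ and invoking the diagonal-filler property to promote it to a genuine upper bound in $\RegEpi{A}$; this is the single place where the axioms of a regular category are essentially used.
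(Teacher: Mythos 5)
Your proof is correct and follows essentially the same route as the paper's: the forward direction reads the supremum off the pushout's universal property, and the reverse direction handles an arbitrary test cocone by taking the (regular epi, mono) factorisation of its diagonal and using diagonal fillers to promote the regular-epi part to an upper bound in $\RegEpi{A}$. The only cosmetic difference is notation ($\alpha,\beta,k,w$ versus the paper's $\eta_1,\eta_2,\xi,m\circ\xi$).
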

\begin{proof}
	Suppose that the diagram in~\cref{eq:pushout-inf} is a pushout.
	By~\cref{composition} in~\cref{regular-properties}, $h\coloneqq \eta_1 \circ f$ ($= \eta_2 \circ g$) is an element of $\RegEpi{A}$ that is above $f$ and $g$, and the universal property of the pushout readily implies that $h$ is a supremum of $f$ and $g$. 
	
	For the converse direction, assume $f$ and $g$ admit a supremum $h\colon A\repi H$ in $\RegEpi{A}$. In particular, as $h$ is above $f$ and $g$, there exist morphisms $\eta_1\colon B\to H$ and $\eta_2\colon C\to H$ such that $\eta_1\circ f=h=\eta_2\circ g$. Hence the following square commutes:
	\[\begin{tikzcd}
		A \arrow[twoheadrightarrow]{r}{f} \arrow[twoheadrightarrow]{d}[swap]{g} & B \arrow{d}{\eta_1} \\
		C \arrow{r}{\eta_2} & H
	\end{tikzcd}\]
	Note that $\eta_1,\eta_2$ are regular epimorphisms by~\cref{cancellation} in~\cref{regular-properties}. We claim that the square above is a pushout. Consider morphisms $\sigma_1\colon B\to J$ and $\sigma_2\colon C\to J$ such that $\sigma_1\circ f=\sigma_2\circ g$, and let $(e,m)$ be the (regular epi, mono) factorisation of $\sigma_1\circ f=\sigma_2\circ g$. Then there are diagonal fillers $\tau_1$ and $\tau_2$ as displayed below.
	\begin{center}
		\begin{tikzcd}
			A \arrow[twoheadrightarrow]{r}{f} \arrow[twoheadrightarrow]{d}[swap]{e} & B \arrow[dashed]{dl}[description]{\tau_1} \arrow{d}{\sigma_1} \\
			K \arrow[rightarrowtail]{r}{m} & J
		\end{tikzcd}
		\ \ \ \ \ \ \ \ 
		\begin{tikzcd}
			A \arrow[twoheadrightarrow]{r}{g} \arrow[twoheadrightarrow]{d}[swap]{e} & C \arrow{d}{\sigma_2} \arrow[dashed]{dl}[description]{\tau_2}  \\
			K \arrow[rightarrowtail]{r}{m} & J
		\end{tikzcd}
	\end{center}
	Hence, $e$ is above $f$ and $g$ in $\RegEpi{A}$. Since $h$ is a supremum of $f$ and $g$, there exists a morphism $\xi\colon H\to K$ satisfying $e= \xi\circ h$. Because $f$ and $g$ are epimorphisms, we see that the following diagram commutes.
	\[\begin{tikzcd}
		A \arrow[twoheadrightarrow]{r}{f} \arrow[twoheadrightarrow]{d}[swap]{g} & B \arrow[twoheadrightarrow]{d}{\eta_1} \arrow[bend left = 30, looseness=1, two heads]{ddr}[description]{\tau_1} & \\
		C \arrow[twoheadrightarrow]{r}{\eta_2} \arrow[bend right = 30, looseness=1, two heads]{drr}[description]{\tau_2} & H \arrow{dr}[description]{\xi} & \\
		& & K
	\end{tikzcd}\]
	The composite morphism $m\circ \xi\colon H\to J$ then satisfies $(m\circ \xi)\circ \eta_1=\sigma_1$ and $(m\circ \xi)\circ \eta_2=\sigma_2$.  As $\eta_1$ (or, equivalently, $\eta_2$) is an epimorphism, it follows that $m\circ \xi$ is the unique arrow with this property.
\end{proof}
\begin{remark}
	With regards to the previous lemma, a closely related fact was proved by Burgess and Caicedo, cf.~\cite[Proposition~10]{BurgessCaicedo1981}. 
\end{remark}
The following is a consequence of~\cref{l:binary-infima} and the preceding discussion:
\begin{corollary}\label{cor:QuoA-bounded-lattice}
	Let $\cat{C}$ be a regular category admitting pushouts. For any object $A$ of $\cat{C}$, its poset of quotients $\Quo{A}$ is a (possibly large) bounded lattice.
\end{corollary}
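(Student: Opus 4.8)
The plan is to realise $\RegEpi{A}$ as a bounded lattice (regarded as a preorder) and then transport this structure to $\Quo{A}$ through poset reflection. Most of the ingredients have in fact already been assembled in the preceding discussion, so the proof amounts to collecting them and checking that the order-reversal is handled correctly.

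First I would account for the infima and the bounds. The remarks preceding \cref{l:binary-infima} show that $\RegEpi{A}$ has all finite infima, each computed as the (regular epi, mono) factorisation of the induced morphism into a finite product. Applying this to the empty family—whose product is the terminal object $\1$—produces a greatest element, namely the factorisation of the canonical arrow $A\to\1$. In addition, $\RegEpi{A}$ has a least element, the identity of $A$. Thus the only structure still to be produced is binary suprema.

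For these I would invoke \cref{l:binary-infima} directly: two regular epimorphisms with domain $A$ admit a supremum in $\RegEpi{A}$ exactly when they admit a pushout in $\cat{C}$. Since $\cat{C}$ is assumed to have pushouts, every pair of objects of $\RegEpi{A}$ has a supremum, so $\RegEpi{A}$ has all binary suprema; together with the least element (the empty supremum) this yields all finite suprema. Combining this with the finite infima found above, $\RegEpi{A}$ is a bounded lattice as a preorder.

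Finally I would pass to the poset reflection. Suprema and infima computed in a preorder descend to its poset reflection, so the reflection of $\RegEpi{A}$—which, as recorded in \cref{s:prelim}, is the opposite $\Quo{A}^{\op}$—inherits the structure of a bounded lattice; dualising gives the claim for $\Quo{A}$, with joins arising from the factorisation construction in $\RegEpi{A}$ and meets arising from pushouts. I do not anticipate a genuine obstacle: the entire content is drawn from the immediately preceding results. The only points requiring care are bookkeeping ones—tracking the order-reversal between $\RegEpi{A}$ and $\Quo{A}$ so that infima and suprema are matched to the correct lattice operations, and keeping in mind that it is precisely the nullary cases (the empty product supplying the top of $\RegEpi{A}$, the identity supplying its bottom) that make the lattice \emph{bounded} rather than merely a lattice.
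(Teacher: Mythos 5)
Your argument is correct and is essentially the paper's own proof, which simply cites \cref{l:binary-infima} together with the preceding discussion (finite infima via (regular epi, mono) factorisation into finite products, the identity as minimum, and binary suprema from pushouts), then passes to the poset reflection $(\Quo{A})^\op$. You have also handled the order-reversal correctly: joins in $\Quo{A}$ come from the factorisation construction and meets from pushouts.
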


\subsection{Barr-exact categories}
Let $\cat{C}$ be a regular category and let $A$ be an object of $\cat{C}$. A subobject $\langle p_1,p_2\rangle \colon R\emb A\times A$ is called a \emph{relation} on $A$, and it is an \emph{equivalence relation} provided it satisfies the following three properties:
\begin{enumerate}[label=(\roman*)]
\item[] \emph{Reflexivity.} There exists an arrow $d\colon A \to R$ in $\cat{C}$ making the following diagram commute:
\[\begin{tikzcd}
A \arrow[dashed]{rr}{d} \arrow[rightarrowtail]{dr}[swap]{\langle 1_A,1_A\rangle} & & R \arrow[rightarrowtail]{dl}{\langle p_1,p_2\rangle} \\
{} & A\times A & {}
\end{tikzcd}\] 
\item[] \emph{Symmetry.} There exists an arrow $s\colon R \to R$ in $\cat{C}$ making the following diagram commute:
\[\begin{tikzcd}
R \arrow[dashed]{rr}{s} \arrow[rightarrowtail]{dr}[swap]{\langle p_2,p_1\rangle} & & R \arrow[rightarrowtail]{dl}{\langle p_1,p_2\rangle} \\
{} & A\times A & {}
\end{tikzcd}\] 
\item[] \emph{Transitivity.} For any pullback diagram in $\cat{C}$ as on the left-hand side below, there exists an arrow $t\colon P\to R$ such that the rightmost diagram commutes:
\[\begin{tikzcd}
P \arrow{r}{\pi_2} \arrow[swap]{d}{\pi_1} \arrow[dr, phantom, "\lrcorner", very near start]  & R \arrow{d}{p_1} \\
R \arrow{r}{p_2} & A
\end{tikzcd}
 \ \ \ \ \ \ 
\begin{tikzcd}
P\arrow[swap]{rd}{\langle p_1\circ \pi_1,p_2\circ\pi_2\rangle}\arrow[dashed]{rr}{t}&&R\arrow[tail]{dl}{\langle p_1,p_2\rangle}\\
& A\times A &
\end{tikzcd}
\]
\end{enumerate}

\begin{example}
If $\cat{C}$ is a variety of algebras then an equivalence relation on an algebra~$A$, in the sense above, coincides with the usual notion of \emph{congruence}.
\end{example}

With any arrow $f\colon A\to B$ in $\cat{C}$ we can associate a relation on $A$, known as the \emph{kernel pair} of $f$. This is obtained by taking the pullback of $f$ along itself:
\[\begin{tikzcd}
R \arrow{r}{p_1} \arrow{d}[swap]{p_2} \arrow[dr, phantom, "\lrcorner", very near start] & A \arrow{d}{f} \\
A \arrow{r}{f} & B 
\end{tikzcd}\]
The kernel pair $\ker{f}\coloneqq \langle p_1,p_2\rangle$ is a relation on $A$. Just recall that the pullback of $f$ along itself can equivalently be computed as the equaliser of the pair of parallel arrows $f\circ \pi_1, f\circ \pi_2 \colon A\times A \rightrightarrows B$, where $\pi_1,\pi_2\colon A\times A \rightrightarrows A$ are the product projections. In fact, $\ker{f}$ is always an equivalence relation on $A$. 

The collection $\Equiv{A}$ of all equivalence relations on $A$ carries a natural partial order, induced by the order of $\Sub{(A\times A)}$. The following is an immediate consequence of~\cite[Propositions~5.3 and~5.4, pp.~156--157]{BarrGrilletOsdol1971}.

\begin{lemma}\label{l:ker-order-emb}
Let $\cat{C}$ be a regular category and let $A$ be an object of $\cat{C}$. Then \[\ker\colon (\Quo{A})^\op\to \Equiv{A}\] is an order embedding between (large) posets.
\end{lemma}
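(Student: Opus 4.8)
The plan is to prove the biconditional that, for regular epimorphisms $f\colon A\repi B$ and $g\colon A\repi C$, one has $\ker f\leq\ker g$ in $\Equiv{A}$ if and only if $g$ factors through $f$. Unwinding the definition of the order on $\Quo{A}$, the condition that $g$ factor through $f$ is exactly the order relation between (the classes of) $f$ and $g$ in $(\Quo{A})^\op$, so this biconditional is the whole content of the lemma: an order embedding between posets is precisely a map that both preserves and reflects the order, injectivity then being automatic. First I would check that $\ker$ is well defined on $\sim$-classes: if $f=h\circ g$ with $h$ an isomorphism, then, $h$ being in particular a monomorphism, the kernel pair of $h\circ g$ coincides with that of $g$, whence $\ker f=\ker g$. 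The one substantial ingredient I will rely on is the standard fact that in any regular category a regular epimorphism is the coequaliser of its own kernel pair.

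For the \emph{preservation} direction, suppose $g=h\circ f$ for some $h$, write $\langle p_1,p_2\rangle\colon R\emb A\times A$ for the kernel pair of $f$, and $\langle q_1,q_2\rangle\colon R_g\emb A\times A$ for that of $g$. From $f\circ p_1=f\circ p_2$ we obtain $g\circ p_1=h\circ f\circ p_1=h\circ f\circ p_2=g\circ p_2$, so the universal property of $\langle q_1,q_2\rangle$ yields a morphism $R\to R_g$ compatible with the projections into $A\times A$; this is exactly a witness that $\ker f\leq\ker g$ as subobjects of $A\times A$. This step uses only that kernel pairs are pullbacks, and not regularity.

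The \emph{reflection} direction is the crux, and it is where regularity enters. Suppose $\ker f\leq\ker g$; then the defining monomorphism of $\ker f$ factors through that of $\ker g$, giving $\iota\colon R\to R_g$ with $q_j\circ\iota=p_j$ for $j=1,2$. Hence $g\circ p_1=g\circ q_1\circ\iota=g\circ q_2\circ\iota=g\circ p_2$, i.e.\ $g$ coequalises the kernel pair of $f$. By the fact recalled above, $f$ is the coequaliser of $\langle p_1,p_2\rangle$, so $g$ factors uniquely as $g=h\circ f$, which says precisely that $g$ factors through $f$, i.e.\ $f\leq g$ in $(\Quo{A})^\op$. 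I expect the only genuine obstacle to be the coequaliser fact itself, everything else being a routine diagram chase. Should one prefer a self-contained argument, it can be reproved from the axioms by writing a given regular epimorphism as a coequaliser of some pair, checking that this pair factors through its kernel pair, and using the (regular epi, mono) factorisation to identify the resulting comparison map as an isomorphism; this is essentially the content abstracted in \cite[Propositions~5.3 and~5.4]{BarrGrilletOsdol1971}.
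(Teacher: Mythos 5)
Your argument is correct and is exactly the standard one that the paper delegates to the citation of \cite[Propositions~5.3 and~5.4]{BarrGrilletOsdol1971}: monotonicity via the universal property of the kernel pair, and reflection of the order via the fact that a regular epimorphism is the coequaliser of its kernel pair. The one fact you flag as a potential obstacle is in fact elementary and needs no factorisation system: if $f=\coeq(u,v)$ then $(u,v)$ factors through $\ker f$, so any morphism coequalising $\ker f$ coequalises $(u,v)$ and hence factors (uniquely, $f$ being epi) through $f$, which already exhibits $f$ as the coequaliser of its kernel pair in any category with kernel pairs.
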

The equivalence relations in the image of the map $\ker\colon (\Quo{A})^\op\to \Equiv{A}$ are called \emph{effective}. In general, there may be equivalence relations on $A$ that are not effective. This leads us to the following notion, first introduced in~\cite{BarrGrilletOsdol1971}.
\begin{definition}
A \emph{Barr-exact category} is a regular category in which every equivalence relation is effective.
\end{definition}

\begin{example}
Any variety of (possibly infinitary) algebras, with morphisms all the homomorphisms, is a Barr-exact category. The category of compact Hausdorff spaces and continuous maps is Barr-exact and, more generally, so is any category that is monadic over the category of sets. Any (elementary) topos is in particular a Barr-exact category.
\end{example}

\begin{remark}\label{rem:Barr-exact-ker-iso}
In view of~\cref{l:ker-order-emb}, a regular category is Barr-exact precisely when the map $\ker\colon (\Quo{A})^\op\to \Equiv{A}$ is an order isomorphism for all objects $A$.
\end{remark}

\begin{remark}
Suppose that $\cat{C}$ is a regular category admitting coequalisers of equivalence relations. Then, for every object $A$ of $\cat{C}$, there is a monotone map 
\[
\coeq \colon \Equiv{A} \to (\Quo{A})^\op
\]
sending an equivalence relation $\langle p_1,p_2\rangle \colon R\emb A\times A$ to the coequaliser of $p_1$ and~$p_2$. The map $\coeq$ is left inverse to $\ker\colon (\Quo{A})^\op\to \Equiv{A}$. Hence, $(\Quo{A})^\op$ is a retract of $\Equiv{A}$ in the category of (large) posets and monotone maps.
\end{remark}

\section{$\K$-sheaves}\label{K-sheaves}
As mentioned in the Introduction, $\ksat$-sheaves can be regarded as ``sheaves defined on compact saturated subsets''. To make this intuition more precise, consider a sheaf of sets on a topological space $X$. That is, a presheaf 
\[
F \colon \Omega(X)^\op \to \Set
\] 
satisfying the \emph{patch property}: for every set $\{U_i\mid i \in I\}\subseteq \Omega(X)$ of open subsets of $X$ and every tuple $(s_i)_{i \in I}\in \prod_{i \in I}{F(U_i)}$, if for all $i,j \in I$ we have $\ros{F}{U_i}{U_i \cap U_j}(s_i) = \ros{F}{U_j}{U_i \cap U_j}(s_j)$, then there exists a unique $s \in F(\bigcup_{i \in I} U_i)$ such that $\ros{F}{\bigcup_{i \in I} U_i}{U_j}(s) = s_j$ for all $j \in I$.

Sheaves on $X$ can be also described as \emph{\'etale spaces} over $X$, i.e.\ local homeomorphisms 
\[
p\colon E \to X.
\] 
In fact, any \'etale space induces a sheaf $\Omega(X)^\op \to \Set$ that sends $U\in \Omega(X)$ to the set of \emph{local sections} of $p$ over $U$, i.e.\ the continuous maps $s\colon U\to E$ such that $p\circ s = \mathrm{id}_U$. The functorial action is given by restricting local sections to open subsets of their domains. Conversely, a sheaf $F$ induces an \'etale space $p\colon E_F \to X$ where $E_F$ is obtained by ``gluing together'' the \emph{stalks} of $F$, and the map $p$ contracts the stalk of $F$ at a point $x\in X$ to~$x$. These assignments yield an equivalence between the category of sheaves of sets over $X$ and the category of \'etale spaces over $X$. See e.g.\ \cite[Corollary~II.6.3]{MacLaneMoerdijk1994}. 

If $\cat{V}$ is any variety of finitary algebras, we can consider the categories of internal $\cat{V}$-algebras in the toposes of $\Set$-valued sheaves over $X$ and of \'etale spaces over $X$, respectively. This yields an equivalence between \emph{sheaves of $\cat{V}$-algebras} over $X$ (that is, presheaves $\Omega(X)^\op \to \cat{V}$ satisfying the patch property) and \emph{\'etale spaces of $\cat{V}$-algebras} over $X$; cf.\ e.g.\ \cite[\S II.7]{MacLaneMoerdijk1994}.

Now, let $F\colon \Omega(X)^\op \to \Set$ be a sheaf with corresponding \'etale space $p\colon E \to X$. An advantage of the latter formulation is that we can consider sections of $p$ over arbitrary subsets of $X$, not just the open ones. Thus, we can associate to any compact saturated subset $K\subseteq X$ the set $G(K)$ of local sections of $p$ over $K$. This induces a presheaf $G$ over the poset of compact saturated subsets of $X$ ordered by inclusion. If $X$ is nice enough (e.g., it is locally compact), the sheaf $F$ can be recovered from $G$ via the isomorphism 
\[
F(U) \cong \lim_{K\subseteq U}{G(K)}
\]
where $K$ ranges over the compact saturated sets contained in $U$. See e.g.\ \cite[Lemma~3.4]{GehrkeGool2018}. The notion of $\ksat$-sheaf introduced in \cref{d:K-sheaf-on-complete-lattice} below captures precisely the properties of the presheaf $G$, as will become clear in \cref{s:K-vs-Omega}.

Throughout this section, we fix an arbitrary regular category $\cat{C}$, an object $A$ of $\cat{C}$, and a complete lattice $P$. 
The latter can be thought of as the lattice of closed subsets of a compact Hausdorff space or, more generally, the lattice of compact saturated subsets of a \emph{stably compact} space (see \cref{ex:stably-compact} for a definition).

The following notion, along with the functor $\gamma_*$ defined in \cref{eq:gamma-direct-image} below, will play a central role in the remainder of the paper.

\begin{definition}
The \emph{canonical representation} of $A$ is the functor 
	\[
	\gamma\colon \RegEpi{A} \to \cat{C}
	\]
	sending a regular epimorphism $A\repi B$ to its codomain (i.e., $\gamma$ is the restriction of the codomain functor $A/{\cat{C}}\to \cat{C}$).
\end{definition}

\begin{remark}
The name ``canonical representation'' stems from the observation that, under certain conditions, the poset reflection of $\RegEpi{A}$ can be thought of as the opposite of the poset of compact saturated subsets of a space $X$, and $\gamma(k)$ as the set of local sections over $k$ of a sheaf over $X$ whose object of global sections is isomorphic to $A$.
\end{remark}

The functor category $[P^\op,\RegEpi{A}]$ can be identified with the large preorder of monotone maps $P^\op\to \RegEpi{A}$, with respect to the pointwise preorder. The canonical representation of $A$ induces a ``direct image'' functor $\gamma_*$ given by post-composing with $\gamma$:
\begin{equation}\label{eq:gamma-direct-image}
\begin{aligned}
\gamma_*\colon [P^\op,\RegEpi{A}] &\to [P^\op, \cat{C}] \\ 
H &\mapsto \gamma\circ H
\end{aligned}
\ \ \ \ \ \ \ \ \ \
\begin{tikzcd}
\RegEpi{A} \arrow{r}{\gamma} & \cat{C} \\
P^\op \arrow{u}{H} \arrow{ur}[swap]{\gamma_* H} & 
\end{tikzcd}
\end{equation}
A morphism $\alpha\colon H \Rightarrow J$ in $[P^\op,\RegEpi{A}]$ is sent by $\gamma_*$ to the horizontal composition of natural transformations $\mathrm{id}_{\gamma}\alpha \colon \gamma_* H \Rightarrow \gamma_* J$.

In this section we shall see that order-theoretic properties of the monotone map $H\colon P^\op\to \RegEpi{A}$ correspond to certain sheaf-like properties---made precise in the following definition---of the associated presheaf $\gamma_* H\colon P^\op\to\cat{C}$. 
\begin{definition} \label{d:K-sheaf-on-complete-lattice}
	A \emph{$\cat{C}$-valued $\K$-sheaf} over $P$ is a functor $F \colon P^\op \to \cat{C}$ satisfying the following properties:
	\begin{enumerate}[label=\textnormal{(K\arabic*)}]
	\item \label{K-empty} $F(\bot)$ is a subterminal object of $\cat{C}$, i.e.\ the unique arrow $F(\bot)\to\1$ is monic.
	\item \label{K-pullback} For all $p, q \in P$, the following is a pullback square in $\cat{C}$:
	\[\begin{tikzcd}[column sep = 4em]
	F(p \vee q) \arrow{r}{\ros{F}{p \vee q}{ p}} \arrow[swap]{d}{\ros{F}{p \vee q}{ q}} \arrow[dr, phantom, "\lrcorner", very near start, xshift=-6pt] & F(p) \arrow{d}{\ros{F}{p}{ p \wedge q}}\\
	F(q) \arrow{r}{\ros{F}{q}{ p \wedge q}} & F(p \wedge q)
	\end{tikzcd}\]
	\item\label{K-codirected-meet} $F$ preserves directed colimits. I.e., for all codirected subsets $\D\subseteq P$, the~cocone $(\ros{F}{p}{\inf \D}\colon F(p) \to F(\inf \D))_{p \in \D}$ is a colimit of the restriction of $F$ to~$\D$.
\end{enumerate}
We let $\KSh(P,\cat{C})$ denote the full subcategory of $[P^\op, \cat{C}]$ defined by the $\K$-sheaves. 
\end{definition}

\begin{remark}
The definition of $\K$-sheaf given above is a slight variant of the homonymous notion introduced by Lurie~\cite[Definition~7.3.4.1]{Lurie2009}. In \emph{op.\ cit.}, $\cat{C}$ is an \mbox{\emph{$\infty$-category}} and $P$ is the lattice of compact subsets of a locally compact Hausdorff space.
Note that Lurie requires $F(\bot)$ to be a terminal object of $\cat{C}$, whereas we relax this condition by replacing ``terminal'' with ``subterminal''. We do this so that \cref{cor:MAIN} below (which generalises the main result of \cite{GehrkeGool2018} for varieties of finitary algebras) holds for all objects of $\cat{C}$, including those objects $A$ such that the unique morphism $A\to\1$ is not a regular epimorphism. E.g., if $\cat{C}$ is the variety of semigroups and $A$ is the empty semigroup, i.e.\ the initial object of $\cat{C}$, the unique morphism $A\to\1$ is not a regular epimorphism.
\end{remark}

We shall also consider the following variant of condition~\ref{K-pullback}:
\begin{enumerate}[label=\textnormal{(K4)}]
\item \label{K-pushout} For all $p, q \in P$, the following is a pushout square in $\cat{C}$:
\[\begin{tikzcd}[column sep = 4em]
F(p \vee q) \arrow{r}{\ros{F}{p \vee q}{ p}} \arrow[swap]{d}{\ros{F}{p \vee q}{ q}} & F(p) \arrow{d}{\ros{F}{p}{ p \wedge q}}\\
F(q) \arrow{r}{\ros{F}{q}{ p \wedge q}} & F(p \wedge q) \arrow[ul, phantom, "\ulcorner", very near start]
\end{tikzcd}\]
\end{enumerate}

\begin{proposition}\label{p:properties-H-presheaf}
Let $H\colon P^\op\to\RegEpi{A}$ be a monotone map. 
	The following statements hold: 
	\begin{enumerate}[label=(\alph*)]
	\item\label{empty-inf} $H$ preserves the infimum of the empty set\footnote{That is, $H$ sends the greatest element of $P^\op$ (equivalently, the least element $\bot$ of $P$) to a maximum in $\RegEpi{A}$. Similarly for the following items.} if and only if $\gamma_* H$ satisfies~\ref{K-empty}.
	\item\label{binary-infima} $H$ preserves binary infima if and only if, for all $p,q\in P$, the following mediating morphism is monic: 
	\[
	\gamma_* H(p\vee q)\to \gamma_* H (p)\times \gamma_* H (q).
	\] 
	\item\label{binary-sup} $H$ preserves binary suprema if and only if $\gamma_* H$ satisfies~\ref{K-pushout}.
	\item\label{directed-sup} $H$ preserves directed suprema if and only if $\gamma_* H$ satisfies~\ref{K-codirected-meet}.
	\end{enumerate}
\end{proposition}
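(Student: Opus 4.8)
The plan is to translate each order-theoretic condition on $H$ into the corresponding categorical property of $F\coloneqq\gamma_* H$, using the explicit descriptions of bounds in $\RegEpi{A}$ recorded in \cref{s:prelim} together with the (regular epi, mono) factorisation system and its diagonal fillers. Throughout I would keep in mind that meets in $P^\op$ are joins in $P$, joins in $P^\op$ are meets in $P$, and directed suprema in $P^\op$ are codirected infima in $P$. By monotonicity of $H$, the element $H(p\vee q)$ is always a lower bound of $H(p),H(q)$, the element $H(p\wedge q)$ is always an upper bound of $H(p),H(q)$, and $H(\inf\D)$ is always an upper bound of $(H(p))_{p\in\D}$; so in each case the content of preservation is that the relevant bound is \emph{optimal}.

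For part~\ref{empty-inf} I would first identify the greatest element of $\RegEpi{A}$. Factoring the unique arrow $A\to\1$ as a regular epimorphism followed by a monomorphism $A\repi E\emb\1$ exhibits $E$ as a subterminal object, and I claim $A\repi E$ is the maximum. The key observation is that any two parallel arrows into a subterminal object coincide; hence if $C$ is subterminal then $A\repi C$ coequalises the kernel pair of every $f\colon A\repi B$, and since in a regular category $f$ is the coequaliser of its own kernel pair, $A\repi C$ factors through $f$. Thus a regular epimorphism is the maximum of $\RegEpi{A}$ whenever its codomain is subterminal, and since maxima are unique up to isomorphism, conversely every maximum has subterminal codomain. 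As preserving the empty infimum means exactly that $H(\bot)$ is this maximum, part~\ref{empty-inf} follows: $H(\bot)$ is the top of $\RegEpi{A}$ iff $F(\bot)$ is subterminal, which is~\ref{K-empty}.

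Parts~\ref{binary-infima} and~\ref{binary-sup} rest on the descriptions of binary bounds in $\RegEpi{A}$. For part~\ref{binary-infima}, recall that the infimum of $H(p),H(q)$ is the regular-epi part of the (regular epi, mono) factorisation of the product map $A\to F(p)\times F(q)$. Since $H(p\vee q)$ is a lower bound, this product map factors as $A\xrightarrow{H(p\vee q)}F(p\vee q)\xrightarrow{m}F(p)\times F(q)$ through the mediating morphism $m$; by uniqueness of factorisations, $H(p\vee q)$ is the infimum iff this presentation is itself a (regular epi, mono) factorisation, i.e.\ iff $m$ is monic. For part~\ref{binary-sup} I would invoke \cref{l:binary-infima}: the maps $H(p),H(q)$ admit a supremum iff the span $F(p)\leftarrow A\to F(q)$ has a pushout, and then the supremum is the pushout composite. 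As $A\repi F(p\vee q)$ is an epimorphism through which both legs factor, this pushout agrees with that of $F(p)\leftarrow F(p\vee q)\to F(q)$; the square in~\ref{K-pushout} is a pushout precisely when the canonical comparison from this pushout to $F(p\wedge q)$ is invertible, which is exactly the condition that $H(p\wedge q)$ be the supremum. Hence $H$ preserves binary suprema iff~\ref{K-pushout} holds.

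The main obstacle is part~\ref{directed-sup}, whose difficulty is that a cocone on the diagram $(F(p))_{p\in\D}$ in $\cat{C}$ need not respect the quotient structure over $A$. The direction \ref{K-codirected-meet}$\Rightarrow$preservation is a routine diagram chase: any upper bound $g$ of $(H(p))_{p\in\D}$ supplies arrows $c_p\colon F(p)\to\gamma(g)$ with $c_p\circ H(p)=g$, these form a cocone on $(F(p))_{p\in\D}$ (using that each $H(p)$ is epic), and the universal property in~\ref{K-codirected-meet} yields a mediating arrow out of $F(\inf\D)$ witnessing $H(\inf\D)\preceq g$, so $H(\inf\D)$ is the least upper bound. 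For the converse I would start from an arbitrary cocone $(c_p\colon F(p)\to X)_{p\in\D}$ and set $g\coloneqq c_p\circ H(p)$; codirectedness of $\D$ makes this independent of $p$, but $g$ need not be a regular epimorphism, so I would factor it as $A\repi G\emb X$. A diagonal filler argument, using that each $H(p)$ is a strong epimorphism and $G\emb X$ is monic, then shows that every $H(p)$ lies below $A\repi G$, so that $A\repi G$ is an upper bound of $(H(p))_{p\in\D}$; since $H(\inf\D)$ is by hypothesis the least upper bound, composing the comparison $F(\inf\D)\to G$ with $G\emb X$ produces the required mediating arrow $F(\inf\D)\to X$, whose uniqueness follows from $H(\inf\D)$ being epic. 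This factor-and-fill step, made coherent by directedness, is the crux of the whole proposition.
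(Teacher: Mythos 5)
Your proposal is correct and follows essentially the same route as the paper: infima in $\RegEpi{A}$ are computed via the (regular epi, mono) factorisation of the map into the product, binary suprema are handled through \cref{l:binary-infima} and the observation that both legs factor through the epimorphism $A\repi \gamma_*H(p\vee q)$, and the directed case is settled by the same factor-and-fill argument on an arbitrary cocone. The only (harmless) deviation is in part~\ref{empty-inf}, where you characterise the maximum of $\RegEpi{A}$ directly via coequalisers of kernel pairs instead of treating it as the empty instance of the finite-product argument, as the paper does.
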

\begin{proof}
Consider an arbitrary finite set $\{p_i\mid i\in I\}\subseteq P$. We claim that $H(\sup_{i\in I}{p_i})$ is an infimum of $\{H(p_i)\mid i\in I\}$ if, and only if, the induced mediating morphism 
\[
\gamma_* H\bigg(\sup_{i\in I}{p_i}\bigg)\to \prod_{i\in I}{\gamma_* H (p_i)}
\] 
is monic. \Cref{empty-inf,binary-infima} then follow at once by letting $\{p_i\mid i\in I\}$ be the empty set and any two-element set, respectively.
Recall that an infimum $t\colon A\repi B$ of $\{H(p_i)\mid i\in I\}$ is obtained by taking the (regular epi, mono) factorisation of the arrow $A\to \prod_{i\in I}{\gamma_* H (p_i)}$ whose composition with the $i$th projection is $H(p_i)$. Thus, $H(\sup_{i\in I}{p_i})$ is an infimum of $\{H(p_i)\mid i\in I\}$ precisely when it is above $t$. Consider the following commutative square.
\[\begin{tikzcd}[row sep = 2em]
A \arrow[twoheadrightarrow]{d}[swap]{H(\sup_{i\in I}{p_i})} \arrow[twoheadrightarrow]{r}{t} & B \arrow[rightarrowtail]{d} \arrow[dashed]{dl} \\
\gamma_* H(\sup_{i\in I}{p_i}) \arrow{r} & \prod_{i\in I}{\gamma_* H (p_i)}
\end{tikzcd}\]
If the bottom horizontal arrow is monic, there is a diagonal filler $B\to \gamma_* H(\sup_{i\in I}{p_i})$, showing that $H(\sup_{i\in I}{p_i})$ is above $t$. Conversely, if $H(\sup_{i\in I}{p_i})$ is above $t$ there is $h\colon B\to \gamma_* H(\sup_{i\in I}{p_i})$ making the upper triangle commute, and such an  $h$ is necessarily a regular epimorphism. Using the fact that the outer square commutes and $t$ is an epimorphism, we see that the lower triangle must also commute. It follows that $h$ is also monic, hence an isomorphism, and so the bottom horizontal arrow is monic.

For \cref{binary-sup}, note that $H$ preserves binary suprema if and only if, for all $p,q\in P$, $H(p\wedge q)$ is a supremum of $H(p)$ and $H(q)$. In turn, by \cref{l:binary-infima}, this is equivalent to saying that the left-hand diagram below is a pushout in $\cat{C}$.
\begin{center}
\begin{tikzcd}[column sep = 4em]
A \arrow[twoheadrightarrow]{r}{H(p)} \arrow[twoheadrightarrow]{d}[swap]{H(q)} & \gamma_* H(p) \arrow{d}{\ros{\gamma_* H}{p}{p \wedge q}}\\
\gamma_* H(q) \arrow{r}{\ros{\gamma_* H}{q}{p \wedge q}} & \gamma_* H(p \wedge q) \arrow[ul, phantom, "\ulcorner", very near start]
\end{tikzcd}
\ \ \ \ \ \ 
\begin{tikzcd}[column sep = 4em]
A \arrow[twoheadrightarrow]{dr}[description]{H(p\vee q)} \arrow[bend left = 30, looseness=1, two heads]{rrd}[description]{H(p)} \arrow[bend right = 30, looseness=1, two heads]{rdd}[description]{H(q)} & & \\
& \gamma_* H(p\vee q) \arrow{d}[swap]{\ros{\gamma_* H}{p \vee q}{ q}} \arrow{r}{\ros{\gamma_* H}{p \vee q}{ p}} & \gamma_* H(p) \\
& \gamma_* H(q) &
\end{tikzcd}
\end{center}
Both $H(p)$ and $H(q)$ factor through the regular epimorphism 
\[
H(p\vee q)\colon A\repi \gamma_*H(p\vee q),
\]
as depicted in the rightmost diagram above.
Hence the leftmost diagram above is a pushout precisely when $\gamma_* H$ satisfies~\ref{K-pushout}.

For \cref{directed-sup}, let $\D$ be a codirected subset of $P$. We must show that $H(\inf \D)$ is a supremum of $\{H(p)\mid p\in \D\}$ if, and only if, the cocone 
\begin{equation}\label{eq:candidate-colimit-cocone}
(\ros{\gamma_* H}{p}{\inf \D}\colon \gamma_* H(p) \to \gamma_* H(\inf \D))_{p \in \D}
\end{equation}
is a colimit of the restriction of $\gamma_* H$ to $\D$. 
Suppose the latter is a colimit cocone.
Clearly, $H(\inf \D)$ is above $H(p)$ for all $p\in \D$, so assume that $f\colon A\repi B$ is an element of $\RegEpi{A}$ that is above $H(p)$ for all $p\in \D$. That is, for each $p\in\D$ there is $g_p \colon \gamma_* H(p)\to B$ such that $g_p\circ H(p) = f$. Then $(g_p \colon \gamma_* H(p) \to B)_{p \in \D}$ is a compatible cocone over the diagram given by the restriction of $\gamma_* H$ to $\D$. Just observe that, for all $p,q \in \D$ such that $p\leq q$, 
\[
g_p \circ \ros{\gamma_* H}{q}{p} \circ H(q) = g_p \circ H(p) = f = g_q \circ H(q)
\]
and so $g_p \circ \ros{\gamma_* H}{q}{p} = g_q$ because $H(q)$ is an epimorphism.
Hence there is a unique arrow $j\colon \gamma_* H(\inf \D) \to B$ satisfying $j\circ \ros{\gamma_* H}{p}{\inf \D} = g_p$ for all $p\in\D$. We then see that $f$ is above $H(\inf \D)$ because, if $p$ is an arbitrary element of $\D$,
\[
j\circ H(\inf \D) = j\circ \ros{\gamma_* H}{p}{\inf\D} \circ H(p) = g_p \circ H(p) = f.
\]
Therefore, $H(\inf \D)$ is a supremum of $\{H(p)\mid p\in \D\}$.

Conversely, suppose that $H(\inf \D)$ is a supremum of $\{H(p)\mid p\in \D\}$ and consider a compatible cocone $(h_p \colon \gamma_* H(p) \to B)_{p \in \D}$ over the diagram given by the restriction of $\gamma_* H$ to $\D$. Fix an arbitrary $p\in\D$ and take the (regular epi, mono) factorisation of $h_p\circ H(p)$, as depicted in the following diagram.
\begin{equation}\label{eq:factor-H(p)hp}
\begin{tikzcd}
A \arrow[twoheadrightarrow]{r}{H(p)} \arrow[twoheadrightarrow]{d}[swap]{f} & \gamma_* H(p) \arrow{d}{h_p} \\
C \arrow[rightarrowtail]{r}{m} & B
\end{tikzcd}
\end{equation}
This yields $f\in \RegEpi{A}$. Note that, because $\D$ is codirected, $f$ does not depend on the choice of $p$. To see this, pick another $q\in \D$. As $\D$ is codirected, there is $r\in \D$ that is below $p$ and $q$. So,
\[
h_r \circ H(r) = h_r \circ \ros{\gamma_* H}{p}{r} \circ H(p) = h_p \circ H(p)
\]
and, by a similar reasoning, $h_r \circ H(r) = h_q \circ H(q)$. Thus, $h_p \circ H(p) = h_q \circ H(q)$. Now, observe that the square in \cref{eq:factor-H(p)hp} admits a diagonal filler $\gamma_* H(p)\to C$. Because $p\in \D$ was chosen arbitrarily, it follows that $f$ is above $H(p)$ for all $p\in \D$. Since $H(\inf \D)$ is a supremum of $\{H(p)\mid p\in \D\}$, there is $j\colon \gamma_* H(\inf \D)\to C$ such that $j\circ H(\inf \D) = f$. The composite $m\circ j\colon \gamma_* H(\inf \D)\to B$ satisfies
\[
m\circ j \circ \ros{\gamma_* H}{p}{\inf\D} \circ H(p) = m\circ j \circ H(\inf{\D}) = m\circ f = h_p \circ H(p) 
\]
and so $m\circ j \circ \ros{\gamma_* H}{p}{\inf\D} = h(p)$ because $H(p)$ is an epimorphism. In other words, $m\circ j$ is a morphism from the cocone in \cref{eq:candidate-colimit-cocone} to the cocone $(h_p \colon \gamma_* H(p) \to B)_{p \in \D}$. Finally, observe that $m\circ j$ is the unique such morphism, for if $n\colon \gamma_* H(\inf \D)\to B$ is another morphism of cocones then, for all $p\in \D$, 
\[
n\circ \ros{\gamma_* H}{p}{\inf{\D}} = h_p = m\circ j \circ \ros{\gamma_* H}{p}{\inf{\D}}
\]
entails $n = m\circ j$ because $\ros{\gamma_* H}{p}{\inf{\D}}$ is an epimorphism (just note that $\ros{\gamma_* H}{p}{\inf{\D}}\circ H(p) = H(\inf\D)$). Therefore, the cocone in \cref{eq:candidate-colimit-cocone} is a colimit of the restriction of $\gamma_* H$ to $\D$.
\end{proof}

Given relations 
\[
\langle r_1,r_2\rangle \colon R\emb A\times A \ \ \text{ and } \ \ \langle s_1,s_2\rangle \colon S\emb A\times A
\] 
on $A$, we can define their composition $R\circ S$ as follows. Consider the following equaliser diagram:
\[\begin{tikzcd}[column sep=3.5em]
U \arrow[rightarrowtail]{r}{u} & R\times S \arrow[yshift=3pt]{r}{r_2\circ \pi_R} \arrow[yshift=-3pt]{r}[swap]{s_1\circ \pi_S} & A.
\end{tikzcd}\]
Then the (regular epi, mono) factorisation of the morphism \[\langle r_1\circ \pi_R\circ u, s_2\circ \pi_S\circ u\rangle\colon U\to A\times A\]
yields the composite relation $U\repi R\circ S\emb A\times A$. See e.g.\ \cite[\S1.56]{FreydScedrov1990}.

A classical result of universal algebra states that, for any two congruences $\theta_1, \theta_2$ on an algebra, the composite $\theta_1\circ \theta_2$ is a congruence if, and only if, $\theta_1$ and $\theta_2$ commute (i.e.\ $\theta_1\circ \theta_2=\theta_2\circ \theta_1$). 
In our setting, congruences correspond to effective equivalence relations. 
If the composition of two effective equivalence relations $\ker{f}$ and $\ker{g}$ is an effective equivalence relation, $\ker{f}$ and $\ker{g}$ commute (i.e.\ $\ker{f}\circ \ker{g}=\ker{g}\circ \ker{f}$). Conversely, if $\ker{f}$ and $\ker{g}$ commute, their composition $\ker{f}\circ \ker{g}$ is an equivalence relation; however, $\ker{f}\circ \ker{g}$ need not be effective. This leads us to the following notion:

\begin{definition}
Let $f,g\in\RegEpi{A}$. We say that $f$ and $g$ \emph{ker-commute} if the relations $\ker{f}$ and $\ker{g}$ commute and their composition is an effective equivalence relation.
\end{definition}

\begin{remark}
If $\cat{C}$ is Barr-exact, two regular epimorphisms $f,g\in\RegEpi{A}$ ker-commute precisely when the relations $\ker{f}$ and $\ker{g}$ commute. 
In fact, Barr-exact categories coincide with the regular categories in which the composition of any pair of commuting effective equivalence relations is an effective equivalence relation \cite[Theorem~17]{BurgessCaicedo1981}.
\end{remark}

To state the next lemma, we recall the following terminology from~\cite{Bourn2003} (cf.\ also~\cite{CKP1993}). In any regular category, a commutative diagram of regular epimorphisms
\[\begin{tikzcd}
B \arrow[twoheadrightarrow]{r}{f} \arrow[twoheadrightarrow]{d}[swap]{g} & C \arrow[twoheadrightarrow]{d}{h} \\
D \arrow[twoheadrightarrow]{r}{i} & E
\end{tikzcd}\]
is a \emph{regular pushout} if the unique mediating morphism from $B$ to the pullback of $h$ along $i$ is a regular epimorphism. (This nomenclature is justified by virtue of the fact that any regular pushout in a regular category is a pushout \cite[p.~118]{Bourn2003}.)
For a proof of the next result, cf.~\cite[Lemma~9 and Propositions~10 and~12]{BurgessCaicedo1981}.

\begin{lemma}\label{l:commuting}
The following statements are equivalent for all $f, g\in \RegEpi{A}$:
\begin{enumerate}
\item $f$ and $g$ ker-commute.
\item $f$ and $g$ admit a supremum $h$ in $\RegEpi{A}$ and $\ker{f}\circ \ker{g}=\ker{h}$.
\item The pushout in $\cat{C}$ of $f$ along $g$ exists and is a regular pushout.
\end{enumerate}
\end{lemma}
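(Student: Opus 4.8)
The plan is to establish the two biconditionals $(1)\Leftrightarrow(2)$ and $(2)\Leftrightarrow(3)$ separately. Throughout I will use the translation between the preorder on $\RegEpi{A}$ and inclusion of kernel pairs: by the definition of $\RegEpi{A}$ one has $f\leq h$ exactly when $h$ factors through $f$, i.e.\ when $\ker f\subseteq\ker h$ in $\Sub{(A\times A)}$; via the order embedding of \cref{l:ker-order-emb} this is the order on $\RegEpi{A}$. I will also freely use the calculus of relations in a regular category, namely that composition of relations is monotone, that $E\circ E=E$ for any equivalence relation $E$ (reflexivity and transitivity), and that $(R\circ S)^{\op}=S^{\op}\circ R^{\op}$, together with the symmetry of kernel pairs.

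For $(1)\Rightarrow(2)$: ker-commutativity provides a regular epimorphism $h\colon A\repi H$ with $\ker h=\ker f\circ\ker g$. Since $\ker g$ is reflexive, $\ker f\subseteq\ker f\circ\ker g=\ker h$, and symmetrically $\ker g\subseteq\ker h$, so $h$ is an upper bound of $f,g$ in $\RegEpi{A}$. If $k$ is any other upper bound then $\ker f,\ker g\subseteq\ker k$, whence $\ker h=\ker f\circ\ker g\subseteq\ker k\circ\ker k=\ker k$ by monotonicity and idempotency; thus $h$ is a supremum (and the pushout exists by \cref{l:binary-infima}). For $(2)\Rightarrow(1)$: here $\ker f\circ\ker g=\ker h$ is a kernel pair, hence an effective equivalence relation; and applying $(-)^{\op}$ together with the symmetry of $\ker f,\ker g,\ker h$ gives $\ker g\circ\ker f=(\ker f\circ\ker g)^{\op}=\ker h=\ker f\circ\ker g$, so the two relations commute. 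Hence $f$ and $g$ ker-commute.

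For $(2)\Leftrightarrow(3)$, both conditions presuppose the existence of the pushout of $f$ along $g$ (by \cref{l:binary-infima}), so I may assume it and set $h\coloneqq\eta_1\circ f=\eta_2\circ g$; forming the pullback $P$ of $\eta_1$ along $\eta_2$, with projections $\pi_B,\pi_C$ and mediating morphism $w=\langle f,g\rangle\colon A\to P$, condition (3) asserts exactly that $w$ is a regular epimorphism. The inclusion $\ker f\circ\ker g\subseteq\ker h$ always holds, since $\ker f,\ker g\subseteq\ker h$ and $\ker h\circ\ker h=\ker h$; so both directions reduce to comparing $\ker h$ with $\ker f\circ\ker g$ through $w$. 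For $(3)\Rightarrow(2)$ I would take the kernel-pair projections $k_1,k_2\colon\ker h\to A$, note that $\langle fk_1,gk_2\rangle$ lands in $P$ (because $hk_1=hk_2$), and pull this map back along the regular epimorphism $w$ to obtain a regular epimorphism $q\colon Q\repi\ker h$ together with a lift $v\colon Q\to A$ satisfying $fv=fk_1q$ and $gv=gk_2q$; the triple $(k_1q,v,k_2q)$ then witnesses that $\langle k_1,k_2\rangle\circ q$ factors through $\ker f\circ\ker g$, and since $q$ is a regular epimorphism while $\langle k_1,k_2\rangle$ is monic, this yields $\ker h\subseteq\ker f\circ\ker g$. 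For $(2)\Rightarrow(3)$ I would instead cover $P$: pulling back $f$ along $\pi_B$ and $g$ along $\pi_C$ and combining the two covers over $P$ gives a regular epimorphism $\tilde\rho\colon\tilde P\repi P$ with maps $b,b'\colon\tilde P\to A$ satisfying $fb=\pi_B\tilde\rho$ and $gb'=\pi_C\tilde\rho$; then $hb=\eta_1\pi_B\tilde\rho=\eta_2\pi_C\tilde\rho=hb'$, so $\langle b,b'\rangle$ factors through $\ker h=\ker f\circ\ker g$. Extracting a middle term over a further regular epimorphism $\sigma\colon S\repi\tilde P$ produces $y\colon S\to A$ with $wy=\tilde\rho\sigma$ (using the joint monicity of $\pi_B,\pi_C$), exhibiting a regular epimorphism onto $P$ that factors through $w$; by \cref{cancellation} in \cref{regular-properties}, $w$ is a regular epimorphism.

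The hard part is the $(2)\Leftrightarrow(3)$ step: the naive arguments repeatedly invoke the choice of a ``middle witness'' for membership in $\ker f\circ\ker g$ and the ``surjectivity'' of $w$, neither of which is available elementwise in a general regular category. The key technical device replacing them is to pull back along regular epimorphisms---which are stable under pullback (regularity) and closed under composition (\cref{composition})---so as to manufacture the required witnesses over a regular epimorphism, and then to transport the conclusion back down using the cancellation property \cref{cancellation} and the joint monicity of the pullback projections $\pi_B,\pi_C$. Some care is also needed to keep (2) and (3) on an equal footing, since both tacitly require the pushout to exist in $\cat{C}$, a condition that is not itself reducible to a statement about kernel pairs.
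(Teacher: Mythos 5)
Your proposal is correct, but note that the paper does not actually prove \cref{l:commuting}: it defers entirely to Burgess and Caicedo, citing Lemma~9 and Propositions~10 and~12 of \cite{BurgessCaicedo1981}. So there is no in-paper argument to compare against; what you have produced is a self-contained reconstruction of the cited results. Your route is the standard one via the calculus of relations in a regular category, and the individual steps check out. For $(1)\Leftrightarrow(2)$ you correctly reduce everything to the order-embedding of \cref{l:ker-order-emb} together with reflexivity, transitivity and symmetry of kernel pairs ($\ker f=\ker f\circ\Delta\subseteq\ker f\circ\ker g$ for the upper-bound claim, $\ker f\circ\ker g\subseteq\ker k\circ\ker k=\ker k$ for minimality, and the $(-)^{\op}$ computation for commutation). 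For $(2)\Leftrightarrow(3)$ you correctly observe that both sides entail existence of the pushout via \cref{l:binary-infima}, and your two covering arguments --- pulling back the comparison map $w$ along $\langle fk_1,gk_2\rangle$ to extract a middle witness over a regular epimorphism, and conversely covering the pullback $P$ and the image factorisation of $\ker f\circ\ker g$ to exhibit a regular epimorphism factoring through $w$ --- are exactly the right replacements for the elementwise reasoning, with stability of regular epimorphisms under pullback, \cref{composition} and \cref{cancellation} of \cref{regular-properties}, orthogonality of the (regular epi, mono) factorisation, and joint monicity of pullback projections doing the work you say they do. The only presentational caveat is that you invoke, without proof, a few standard identities of the relational calculus (monotonicity of composition, the unit law $R\circ\Delta=R$, $(R\circ S)^{\op}=S^{\op}\circ R^{\op}$, and the characterisation of factorisation through $R\circ S$ up to a regular-epi cover); these all hold in any regular category and are in \cite[\S1.56]{FreydScedrov1990}, but a fully self-contained write-up would either cite them or verify them.
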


\begin{remark}
With regards to the previous result, a related fact was proved by Fay in~\cite{Fay1977}, motivated by an earlier unpublished version of~\cite{BurgessCaicedo1981} whose main results were announced in~\cite{BCnotices1972}.
\end{remark}

\begin{lemma}\label{l:monic-prod-implies-pullback}
	Let $H\colon P^\op\to \RegEpi{A}$ be a monotone map. 
	The following statements are equivalent: 
	\begin{enumerate}
	\item\label{i:H-bin-inf-sup-ker-comm} $H$ preserves binary infima and binary suprema, and its image consists of pairwise ker-commuting elements.
	\item\label{i:gamma-star-H-sat-K2} $\gamma_* H \colon P^\op \to \cat{C}$ satisfies~\ref{K-pullback}.
	\end{enumerate}
\end{lemma}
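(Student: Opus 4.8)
The plan is to reduce both conditions to a single statement about a mediating morphism into a pullback, and then to toggle between \emph{pullback} and \emph{pushout} using the regular-epimorphism calculus of \cref{regular-properties}. Throughout, write $F \coloneqq \gamma_* H$ and fix $p, q \in P$. Let $\Pi$ be the pullback of $\ros{F}{p}{p \wedge q}$ along $\ros{F}{q}{p \wedge q}$, with projections $\pi_1 \colon \Pi \to F(p)$ and $\pi_2 \colon \Pi \to F(q)$; let $\phi \colon F(p \vee q) \to \Pi$ be induced by $\ros{F}{p \vee q}{p}$ and $\ros{F}{p \vee q}{q}$, and let $\psi \colon A \to \Pi$ be induced by $H(p)$ and $H(q)$. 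By definition, $\gamma_* H$ satisfies~\ref{K-pullback} at $(p,q)$ exactly when $\phi$ is an isomorphism.

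I would first collect three elementary facts. (i) Every transition map in sight is a regular epimorphism: by functoriality $\ros{F}{p}{p \wedge q} \circ H(p) = H(p \wedge q)$, and since the right-hand side is a regular epimorphism, \cref{cancellation} in \cref{regular-properties} forces $\ros{F}{p}{p \wedge q}$ to be one (and symmetrically for the remaining transition maps). (ii) There is a factorisation $\psi = \phi \circ H(p \vee q)$, by uniqueness of morphisms into the pullback $\Pi$ together with functoriality of $F$; since $H(p \vee q)$ is a regular epimorphism, \cref{composition,cancellation} in \cref{regular-properties} let me transfer the property of being a regular epimorphism freely between $\psi$ and $\phi$. (iii) The morphism $\phi$ is monic if and only if $H$ preserves binary infima: composing $\phi$ with the canonical monomorphism $\iota \colon \Pi \emb F(p) \times F(q)$ yields precisely the comparison map of \cref{binary-infima} in \cref{p:properties-H-presheaf}, and since $\iota$ is monic, $\phi$ is monic exactly when that comparison map is.

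For $(1) \Rightarrow (2)$: by \cref{binary-sup} in \cref{p:properties-H-presheaf}, preservation of binary suprema means $F(p \wedge q)$ is the pushout of $H(p)$ along $H(q)$; ker-commutativity then makes this pushout \emph{regular} by \cref{l:commuting}, that is, $\psi$ is a regular epimorphism. By fact~(ii) $\phi$ is then a regular epimorphism, and by fact~(iii) it is monic; a morphism that is both is an isomorphism, so~\ref{K-pullback} holds. For $(2) \Rightarrow (1)$: if $\phi$ is an isomorphism it is in particular monic, so $H$ preserves binary infima by fact~(iii). The decisive step is that the genuine pullback square with corner $\Pi$ now consists \emph{entirely} of regular epimorphisms: its edges into $F(p \wedge q)$ by fact~(i), and its projections $\pi_1, \pi_2$ because they are pullbacks of the regular epimorphisms $\ros{F}{q}{p \wedge q}$ and $\ros{F}{p}{p \wedge q}$, which are pullback-stable. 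Hence \cref{pullback-pushout} in \cref{regular-properties} makes this pullback a pushout; transporting along the isomorphism $\phi$ then shows the square~\ref{K-pushout} is a pushout, so $H$ preserves binary suprema by \cref{binary-sup} in \cref{p:properties-H-presheaf}. Finally, $\psi = \phi \circ H(p \vee q)$ is a regular epimorphism by fact~(ii), so this pushout is regular and $H(p), H(q)$ ker-commute by \cref{l:commuting}.

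I expect the main obstacle to be bookkeeping rather than conceptual depth: one must keep the three mediating morphisms $\phi$, $\psi$ and the comparison into the bare product $F(p) \times F(q)$ distinct, verify that all four edges of the pullback square are regular epimorphisms so that \cref{pullback-pushout} applies, and invoke the regular-epimorphism lemmas in exactly the right direction. The two genuinely load-bearing inputs are \cref{l:commuting}, which identifies ker-commutativity with the regular-pushout condition (equivalently, with $\psi$ being a regular epimorphism), and \cref{pullback-pushout} in \cref{regular-properties}, which converts the pullback of regular epimorphisms into a pushout and thereby drives the $(2) \Rightarrow (1)$ direction.
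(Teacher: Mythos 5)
Your proof is correct and follows essentially the same route as the paper's: both directions hinge on the mediating morphism into the pullback being monic (via \cref{binary-infima} in \cref{p:properties-H-presheaf}) and a regular epimorphism (via \cref{l:commuting} and cancellation), together with \cref{pullback-pushout} in \cref{regular-properties} to convert the pullback of regular epimorphisms into a pushout. Your explicit bookkeeping of $\phi$, $\psi$ and the comparison into $F(p)\times F(q)$ matches the paper's use of $\chi$ and $\chi\circ H(p\vee q)$, so there is nothing to add.
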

\begin{proof}
Assume that \ref{i:H-bin-inf-sup-ker-comm} holds and consider the following commutative diagram, where $\chi$ is the unique mediating morphism induced by the universal property of the pullback (recall that every arrow in the image of $\gamma_* H$ is a regular epimorphism).
\begin{equation}\label{eq:pullback-pushout}
\begin{tikzcd}[column sep= 4.2em]
\gamma_* H(p \vee q) \arrow[bend left = 20, looseness=1, two heads]{rrd}[description]{\ros{\gamma_* H}{p \vee q}{ p}} \arrow[swap, bend right = 30, looseness=1, two heads]{ddr}[description]{\ros{\gamma_* H}{p \vee q}{ q}}\arrow[dashed]{rd}[description]{\chi} & & \\
& P \arrow[twoheadrightarrow]{r}{p_1} \arrow[twoheadrightarrow]{d}[swap]{p_2} \arrow[dr, phantom, "\lrcorner", very near start] & \gamma_* H(p) \arrow[twoheadrightarrow]{d}{\ros{\gamma_* H}{p}{ p \wedge q}} \\
& \gamma_* H(q) \arrow[twoheadrightarrow]{r}{\ros{\gamma_* H}{q}{ p \wedge q}} & \gamma_* H(p \wedge q)
\end{tikzcd}
\end{equation}
 The composite
\[\begin{tikzcd}[column sep = 3em]
\gamma_* H(p \vee q) \arrow{r}{\chi} & P \arrow{r}{\langle p_1,p_2\rangle} & \gamma_* H (p)\times \gamma_* H (q)
\end{tikzcd}\]
coincides with the pairing of $\ros{\gamma_* H}{p \vee q}{ p}$ and $\ros{\gamma_* H}{p \vee q}{ q}$. As $H$ preserves binary infima, the latter pairing is a monomorphism by \cref{binary-infima} in \cref{p:properties-H-presheaf}, and so $\chi$ is monic. 
On the other hand, because $H$ preserves binary suprema, the outer diagram in \cref{eq:pullback-pushout} is a pushout by~\cref{binary-sup} in \cref{p:properties-H-presheaf}. Thus, the diagram obtained by precomposing with the (regular) epimorphism $H(p\vee q)\colon A \repi \gamma_* H(p \vee q)$ is also a pushout:
\[\begin{tikzcd}[column sep = 4.2em]
A \arrow[twoheadrightarrow]{r}{H(p)} \arrow[twoheadrightarrow]{d}[swap]{H(q)} & \gamma_* H(p) \arrow[twoheadrightarrow]{d}{\ros{\gamma_* H}{p}{p\wedge q}} \\
\gamma_* H(q) \arrow[twoheadrightarrow]{r}{\ros{\gamma_* H}{q}{p\wedge q}} & \gamma_* H(p\wedge q) \arrow[ul, phantom, "\ulcorner", very near start]
\end{tikzcd}\]
The unique arrow $A \to P$ induced by the universal property of the pullback coincides with $\chi\circ H(p\vee q)$ and is a regular epimorphism by \cref{l:commuting}, since $H(p)$ and $H(q)$ ker-commute. It follows from \cref{cancellation} in \cref{regular-properties} that $\chi$ is a regular epimorphism, hence an isomorphism. Thus the outer diagram in \cref{eq:pullback-pushout} is a pullback, i.e.\ $\gamma_* H$ satisfies \ref{K-pullback}.

Conversely, suppose that \ref{i:gamma-star-H-sat-K2} holds and fix arbitrary elements $p,q \in P$.
As $\gamma_* H$ satisfies \ref{K-pullback}, the following is a pullback square.
\begin{equation}\label{eq:pullback-sq-gammastar}
\begin{tikzcd}[column sep = 4em]
\gamma_* H(p \vee q) \arrow[twoheadrightarrow]{r}{\ros{\gamma_* H}{p \vee q}{ p}} \arrow[twoheadrightarrow,swap]{d}{\ros{\gamma_* H}{p \vee q}{ q}} \arrow[xshift=-6pt, dr, phantom, "\lrcorner", very near start] & \gamma_* H(p) \arrow[twoheadrightarrow]{d}{\ros{\gamma_* H}{p}{ p \wedge q}}\\
\gamma_* H(q) \arrow[twoheadrightarrow]{r}{\ros{\gamma_* H}{q}{ p \wedge q}} & \gamma_* H(p \wedge q)
\end{tikzcd}
\end{equation}
The pairing of $\ros{\gamma_*H}{p \vee q}{ p}$ and $\ros{\gamma_*H}{p \vee q}{ q}$ is then an equaliser of the arrows
\[\begin{tikzcd}[column sep = 4em]
\gamma_* H (p)\times \gamma_* H (q) \arrow{r} & \gamma_* H (p) \arrow{r}{\ros{\gamma_* H}{p}{ p \wedge q}} & \gamma_* H(p \wedge q)
\end{tikzcd}\]
and 
\[\begin{tikzcd}[column sep = 4em]
\gamma_* H (p)\times \gamma_* H (q) \arrow{r} & \gamma_* H (q) \arrow{r}{\ros{\gamma_* H}{q}{ p \wedge q}} & \gamma_* H(p \wedge q),
\end{tikzcd}\]
where the first morphisms in the two compositions are the appropriate product projections. 
In particular, the mediating morphism 
\[
\gamma_* H(p\vee q)\to \gamma_* H (p)\times \gamma_* H (q)
\] 
given by the pairing of $\ros{\gamma_*H}{p \vee q}{ p}$ and $\ros{\gamma_*H}{p \vee q}{ q}$ is monic. It follows from \cref{binary-infima} in \cref{p:properties-H-presheaf} that $H$ preserves binary infima.
Moreover, the square in \cref{eq:pullback-sq-gammastar} is a pushout by \cref{pullback-pushout} in \cref{regular-properties}, and so an application of \cref{binary-sup} in \cref{p:properties-H-presheaf} shows that $H$ preserves binary suprema.
Finally, in order to prove that $H(p)$ and $H(q)$ ker-commute, consider the following commutative diagram.
\begin{equation*}
	\begin{tikzcd}[column sep= 3.8em]
		A \arrow[bend left = 30, looseness=1, two heads]{rrd}[description]{H(p)} \arrow[swap, bend right = 30, looseness=1, two heads]{ddr}[description]{H(q)}\arrow[twoheadrightarrow]{rd}[description]{H(p \lor q)} & & \\
		& \gamma_* H(p \vee q) \arrow[twoheadrightarrow]{r}{\ros{\gamma_*H}{p \vee q}{ p}} \arrow[twoheadrightarrow,swap]{d}{\ros{\gamma_*H}{p \vee q}{ q}} \arrow[dr, phantom, "\lrcorner", very near start] & \gamma_* H(p) \arrow[twoheadrightarrow]{d}{\ros{\gamma_* H}{p}{ p \wedge q}} \\
		& \gamma_* H(q) \arrow[twoheadrightarrow]{r}{\ros{\gamma_* H}{q}{ p \wedge q}} & \gamma_* H(p \wedge q)
	\end{tikzcd}
\end{equation*}
The inner square is a pushout, and the morphism $H(p\vee q)\colon A \repi \gamma_* H(p \vee q)$ is an epimorphism (in fact, a regular epimorphism).
Therefore, the outer square is a pushout.
Since the inner square is a pullback and the morphism $H(p\vee q)\colon A \repi \gamma_* H(p \vee q)$ is a regular epimorphism, we see that the outer square is a regular pushout.
Therefore, $H(p)$ and $H(q)$ ker-commute by \cref{l:commuting}.
\end{proof}

We thus obtain a characterisation of those monotone maps $H\colon P^\op\to \RegEpi{A}$ such that the presheaf $\gamma_* H \colon P^\op\to\cat{C}$ is a $\K$-sheaf.
\begin{theorem}\label{thm:gamma-star-H-sheaf}
Consider a monotone map $H\colon P^\op\to \RegEpi{A}$.
The following statements are equivalent:
\begin{enumerate}
\item \label{i:H}
$H$ preserves finite infima and non-empty suprema, and its image consists of pairwise ker-commuting elements.
\item \label{i:gamma-H}
$\gamma_* H \colon P^\op\to\cat{C}$ is a $\K$-sheaf. 
\end{enumerate}
\end{theorem}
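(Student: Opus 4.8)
The plan is to deduce the equivalence purely formally, by decomposing each ``saturation'' condition on $H$ into the primitive order-theoretic clauses already handled in \cref{p:properties-H-presheaf} and \cref{l:monic-prod-implies-pullback}, and then matching those clauses one-to-one against the defining properties \ref{K-empty}, \ref{K-pullback} and \ref{K-codirected-meet} of a $\K$-sheaf.

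First I would record two standard reductions for a monotone map $H$. Since $P^\op$ is a complete lattice, $H$ preserves \emph{finite} infima if and only if it preserves the empty infimum and all binary infima (iterate the binary case). Dually, $H$ preserves \emph{non-empty} suprema if and only if it preserves binary suprema and directed suprema: given a non-empty $S\subseteq P^\op$, the family $\D$ of suprema of finite non-empty subsets of $S$ is directed and satisfies $\sup\D=\sup S$; if $H$ preserves binary (hence all finite non-empty) suprema then $H[\D]$ is directed with $\sup H[\D]=\sup H[S]$, so that preservation of the directed supremum $\sup\D$ forces $H(\sup S)=\sup H[S]$. The reverse implication is immediate.

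With these reductions, \cref{i:H} unpacks into exactly five clauses: $H$ preserves the empty infimum, $H$ preserves binary infima, $H$ preserves binary suprema, $H$ preserves directed suprema, and the image of $H$ consists of pairwise ker-commuting elements. For \cref{i:H}$\Rightarrow$\cref{i:gamma-H}, the first clause yields \ref{K-empty} by \cref{empty-inf} in \cref{p:properties-H-presheaf}, the fourth yields \ref{K-codirected-meet} by \cref{directed-sup} in \cref{p:properties-H-presheaf}, and the remaining three clauses are exactly \cref{i:H-bin-inf-sup-ker-comm} in \cref{l:monic-prod-implies-pullback}, which delivers \ref{K-pullback}. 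Hence $\gamma_* H$ satisfies \ref{K-empty}, \ref{K-pullback} and \ref{K-codirected-meet}, i.e.\ it is a $\K$-sheaf.

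For \cref{i:gamma-H}$\Rightarrow$\cref{i:H} I would run the same dictionary backwards: \ref{K-empty} returns preservation of the empty infimum, \ref{K-codirected-meet} returns preservation of directed suprema, and \cref{l:monic-prod-implies-pullback} extracts from \ref{K-pullback} the preservation of binary infima, the preservation of binary suprema, and the pairwise ker-commutativity of the image. Reassembling through the two reductions above gives preservation of all finite infima and of all non-empty suprema, which is \cref{i:H}. The argument is thus essentially bookkeeping; the only genuinely non-formal point is the reduction of non-empty suprema to binary-plus-directed suprema, so I would be careful to exhibit the directed family $\D$ of finite joins and justify the interchange $\sup H[\D]=\sup H[S]$ rather than leave it implicit.
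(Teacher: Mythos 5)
Your proposal is correct and follows essentially the same route as the paper: the paper's proof likewise combines \cref{empty-inf,directed-sup} of \cref{p:properties-H-presheaf} with \cref{l:monic-prod-implies-pullback} and the observation that preserving non-empty suprema is equivalent to preserving binary and directed suprema. The only difference is that you spell out that last reduction (via the directed family of finite joins), which the paper leaves as a one-line remark.
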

\begin{proof}	
In view of \cref{empty-inf,directed-sup} in \cref{p:properties-H-presheaf}, combined with \cref{l:monic-prod-implies-pullback}, $\gamma_* H$ satisfies \ref{K-empty}--\ref{K-codirected-meet} if, and only if, $H$ preserves finite infima and non-empty suprema and its image consists of pairwise ker-commuting elements. Just observe that $H$ preserves non-empty suprema precisely when it preserves binary and directed suprema.
\end{proof}

\begin{corollary}\label{cor:gamma-star-H-sheaf-repr}
Consider a monotone map $H\colon P^\op\to \RegEpi{A}$.
The following statements are equivalent:	
\begin{enumerate}
		\item
		$H$ preserves finite infima and arbitrary suprema, and its image consists of pairwise ker-commuting elements.
		\item
		$\gamma_* H \colon P^\op\to\cat{C}$ is a $\K$-sheaf and $H(\top)$ is an isomorphism.
	\end{enumerate}
\end{corollary}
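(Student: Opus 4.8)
The plan is to deduce this directly from \cref{thm:gamma-star-H-sheaf}, since the two statements differ only in their treatment of the empty supremum. Condition~(1) here strengthens the first condition of \cref{thm:gamma-star-H-sheaf} by requiring preservation of \emph{arbitrary} rather than merely non-empty suprema, while condition~(2) here adds to the second condition of \cref{thm:gamma-star-H-sheaf} the requirement that $H(\top)$ be an isomorphism. I would show that these two additional requirements are equivalent, making the corollary a purely formal consequence of the theorem.

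First I would record the elementary observation that a monotone map defined on $P^\op$ preserves arbitrary suprema if and only if it preserves both non-empty suprema and the supremum of the empty family, since every subset of $P^\op$ is either empty or non-empty. Thus condition~(1) of the corollary is precisely the conjunction of the first condition of \cref{thm:gamma-star-H-sheaf} with the extra requirement that $H$ preserve the empty supremum.

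Next I would determine what that extra requirement amounts to. The supremum of the empty family in $P^\op$ is the least element of $P^\op$, i.e.\ the greatest element $\top$ of $P$; order-dually to the situation described in the footnote to \cref{p:properties-H-presheaf}, $H$ preserves this supremum exactly when $H(\top)$ is a minimum of $\RegEpi{A}$. As recalled in the discussion preceding \cref{l:binary-infima}, a regular epimorphism $A\repi B$ is a minimum of $\RegEpi{A}$ if and only if it is an isomorphism. Hence $H$ preserves the empty supremum if and only if $H(\top)$ is an isomorphism, and combining this equivalence with \cref{thm:gamma-star-H-sheaf} yields the stated result.

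I expect no genuine obstacle: the argument is entirely formal, and the only points requiring care are the opposite-order convention---so that the empty supremum computed in $P^\op$ is $\top$ and its image should be the \emph{minimum}, not the maximum, of $\RegEpi{A}$---together with the characterisation of that minimum as the class of isomorphisms.
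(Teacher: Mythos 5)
Your argument is correct and is essentially identical to the paper's own proof: both reduce the corollary to \cref{thm:gamma-star-H-sheaf} by observing that preserving the empty supremum means sending $\top$ to a minimum of $\RegEpi{A}$, which is precisely an isomorphism. No issues.
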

\begin{proof}
This is an immediate consequence of \cref{thm:gamma-star-H-sheaf}. Just observe that $H$ preserves the supremum of the empty set if, and only if,
\[
H(\top)\colon A \to \gamma_* H(\top)
\]
is a minimum in $\RegEpi{A}$, i.e.\ an isomorphism in $\cat{C}$.
\end{proof}

\section{Soft $\K$-sheaf Representations}\label{s:soft-K-sheaf-repr}
As in the previous section, we fix an arbitrary regular category $\cat{C}$, an object $A$ of $\cat{C}$, and a complete lattice $P$.

To start with, we introduce the concept of softness for presheaves. We note that, by~\cref{cancellation} in~\cref{regular-properties}, all arrows in the image of the canonical representation ${\gamma\colon \RegEpi{A}\to \cat{C}}$ are regular epimorphisms. 
Therefore, this property is inherited by all presheaves of the form $\gamma_* H$ with $H\in [P^\op,\RegEpi{A}]$. The following observation is straightforward:
\begin{lemma}\label{l:soft-weakening}
	The following statements are equivalent for any presheaf $F\in [P^\op,\cat{C}]$:
	\begin{enumerate}
	\item\label{soft_top} $\ros{F}{\top}{ p}\colon F(\top)\to F(p)$ is a regular epimorphism for all $p\in P$.
	\item\label{soft-pairwise} $\ros{F}{q}{ p}\colon F(q)\to F(p)$ is a regular epimorphism for all $p,q\in P$ with $p\leq q$.
	\end{enumerate}
\end{lemma}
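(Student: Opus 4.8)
The plan is to dispatch both implications almost immediately, since the statement reduces to functoriality of $F$ together with the cancellation property for regular epimorphisms recorded in \cref{cancellation} of \cref{regular-properties}. There is essentially no substantive content beyond chasing the direction of the restriction maps correctly.

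For the implication \cref{soft-pairwise}$\Rightarrow$\cref{soft_top} I would simply specialise to $q = \top$: because $p \leq \top$ holds for every $p\in P$, the arrow $\ros{F}{\top}{p}$ is a particular instance of $\ros{F}{q}{p}$ with $p \leq q$, and hence a regular epimorphism by hypothesis. No further argument is needed here.

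For the converse \cref{soft_top}$\Rightarrow$\cref{soft-pairwise}, fix $p\leq q$ in $P$. In $P^\op$ the composite $\top\to q\to p$ is sent by the functor $F$ to the factorisation
\[
\ros{F}{\top}{p} = \ros{F}{q}{p}\circ \ros{F}{\top}{q}.
\]
By hypothesis \cref{soft_top}, the left-hand side is a regular epimorphism. Setting $f \coloneqq \ros{F}{q}{p}$ and $g \coloneqq \ros{F}{\top}{q}$, the composite $f\circ g$ is a regular epimorphism, so \cref{cancellation} in \cref{regular-properties} forces $f = \ros{F}{q}{p}$ to be a regular epimorphism as well, which is precisely \cref{soft-pairwise}.

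I do not anticipate any real obstacle: the only point demanding a little care is to record the factorisation in the correct order—so that the map to be shown epic, namely $\ros{F}{q}{p}$, appears as the outer factor $f$ in $f\circ g$—ensuring that the cancellation lemma applies to the intended morphism rather than to the irrelevant factor $\ros{F}{\top}{q}$.
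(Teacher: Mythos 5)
Your proposal is correct and coincides with the paper's own argument: the forward direction is the specialisation $q=\top$, and the converse uses the factorisation $\ros{F}{\top}{p}=\ros{F}{q}{p}\circ\ros{F}{\top}{q}$ together with \cref{cancellation} of \cref{regular-properties}. Your attention to which factor plays the role of $f$ in the cancellation lemma is exactly the right (and only) point of care.
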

\begin{proof}
Clearly, \ref{soft-pairwise} implies \ref{soft_top}. Conversely, suppose \ref{soft_top} holds and let $p,q\in P$ satisfy ${p\leq q}$.
By functoriality of $F$ we have $\ros{F}{q}{ p} \circ \ros{F}{\top}{ q} = \ros{F}{\top}{p}$, which is a regular epimorphism. It follows from \cref{cancellation} in \cref{regular-properties} that $\ros{F}{q}{p}$ is a regular epimorphism.
\end{proof}

\begin{definition}
	A presheaf $P^\op\to\cat{C}$ is said to be \emph{soft} if it satisfies either of the equivalent conditions in~\cref{l:soft-weakening}.
\end{definition}

\begin{remark}\label{rem:soft-implies-K4}
By \cref{pullback-pushout} in \cref{regular-properties}, any soft $\K$-sheaf satisfies \ref{K-pushout}.
\end{remark}

Next, we look at those $\K$-sheaves $P^\op\to\cat{C}$ which allow us to recover $A$, up to isomorphism, as the object of global sections.

\begin{definition}
	A \emph{$\K$-sheaf representation} of $A$ over $P$ is a pair $(F,\phi)$ where $F\colon P^\op\to \cat{C}$ is a $\K$-sheaf and $\phi\colon A \to F(\top)$ is an isomorphism in $\cat{C}$. If $F$ is soft, we call $(F,\phi)$ a \emph{soft $\K$-sheaf representation} of $A$ over $P$.
	
	Denote by 
	\[
	\KShr{A}(P,\cat{C})
	\] 
	the category of $\K$-sheaf representations of $A$ over $P$. The objects of $\KShr{A}(P,\cat{C})$ are $\K$-sheaf representations $(F,\phi)$ of $A$ over $P$, and a morphism $(F,\phi)\to (G,\psi)$ is a natural transformation $\alpha\colon F\Rightarrow G$ satisfying $\alpha_{\top}\circ \phi=\psi$. The full subcategory of $\KShr{A}(P,\cat{C})$ defined by the \emph{soft} $\K$-sheaf representations is denoted by 
	\[
	\sKShr{A}(P,\cat{C}).
	\]
\end{definition}

\begin{lemma}\label{l:soft-repr-thin}
The following statements hold:
\begin{enumerate}[label=(\alph*)]
\item\label{soft-domain} For any two objects $(F,\phi), (G,\psi)\in \KShr{A}(P,\cat{C})$, if $(F,\phi)$ is a soft $\K$-sheaf representation, there is at most one morphism $(F,\phi)\to (G,\psi)$ in $\KShr{A}(P,\cat{C})$.
\item\label{soft-thin} $\sKShr{A}(P,\cat{C})$ is a (large) preorder.
\end{enumerate}
\end{lemma}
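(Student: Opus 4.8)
The plan is to establish \cref{soft-domain} directly and then read off \cref{soft-thin} as an immediate consequence. For \cref{soft-domain}, I assume $(F,\phi)$ is a soft $\K$-sheaf representation and take two morphisms $\alpha,\beta\colon (F,\phi)\to (G,\psi)$ in $\KShr{A}(P,\cat{C})$; the goal is to show $\alpha=\beta$, i.e.\ $\alpha_p=\beta_p$ for every $p\in P$. The whole argument rests on a single idea: softness turns each restriction out of the top into an epimorphism, so naturality pins down every component once the component at $\top$ is fixed.

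First I would pin down the components at $\top$. By definition of a morphism in $\KShr{A}(P,\cat{C})$ we have $\alpha_\top\circ\phi=\psi=\beta_\top\circ\phi$. Since $\phi\colon A\to F(\top)$ is an isomorphism, it is in particular an epimorphism, and cancelling it on the right yields $\alpha_\top=\beta_\top$.

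The key step is to propagate this equality across all of $P$. For an arbitrary $p\in P$, naturality of $\alpha$ and of $\beta$ along the unique arrow $\top\to p$ in $P^\op$ gives
\[
\alpha_p\circ \ros{F}{\top}{p}=\ros{G}{\top}{p}\circ \alpha_\top, \qquad \beta_p\circ \ros{F}{\top}{p}=\ros{G}{\top}{p}\circ \beta_\top.
\]
Because $\alpha_\top=\beta_\top$, the right-hand sides coincide, so $\alpha_p\circ \ros{F}{\top}{p}=\beta_p\circ \ros{F}{\top}{p}$. Now softness enters via \cref{l:soft-weakening}: $\ros{F}{\top}{p}\colon F(\top)\to F(p)$ is a regular epimorphism, hence an epimorphism, and cancelling it on the right forces $\alpha_p=\beta_p$. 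As $p$ was arbitrary, $\alpha=\beta$.

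Finally, \cref{soft-thin} follows at once, since by definition every object of $\sKShr{A}(P,\cat{C})$ is a soft $\K$-sheaf representation: for any two such objects the hypothesis of \cref{soft-domain} holds, so there is at most one morphism between them, and a category with at most one morphism between any two objects is exactly a (large) preorder. I do not anticipate any real obstacle here; the only substantive ingredient is that softness makes each $\ros{F}{\top}{p}$ an epimorphism, with the isomorphism $\phi$ handling the base case at $\top$, after which the naturality squares reduce everything to epi-cancellation.
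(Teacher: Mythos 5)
Your proof is correct and follows essentially the same argument as the paper: the paper carries out the identical chain of equalities (naturality of $\alpha$ and $\beta$, the condition $\alpha_\top\circ\phi=\psi=\beta_\top\circ\phi$, and epi-cancellation of the soft restriction $\ros{F}{\top}{p}$) in a single computation with $\phi$ kept on the right, whereas you split off the base case $\alpha_\top=\beta_\top$ first. This is only a cosmetic reorganisation; no further comment is needed.
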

\begin{proof}
For~\cref{soft-domain}, consider any two arrows $\alpha,\beta\colon (F,\phi)\rightrightarrows (G,\psi)$ in $\KShr{A}(P,\cat{C})$ such that $F$ is soft. We claim that $\alpha_p=\beta_p$ for all $p\in P$, and so $\alpha=\beta$.
For every $p\in P$, we have
\begin{align*}
\alpha_p\circ \ros{F}{\top}{p} \circ \phi &= {\ros{G}{\top}{p}} \circ \alpha_{\top} \circ \phi \tag*{Naturality of $\alpha$} \\
&= {\ros{G}{\top}{p}} \circ \psi \\
&= {\ros{G}{\top}{p}} \circ \beta_{\top} \circ \phi \\
&= \beta_p\circ {\ros{F}{\top}{p}} \circ \phi. \tag*{Naturality of $\beta$}
\end{align*}
As $\phi$ is an isomorphism we get $\alpha_p\circ {\ros{F}{\top}{p}} = \beta_p\circ {\ros{F}{\top}{p}}$. Since $F$ is soft, $\ros{F}{\top}{p}$ is an epimorphism. Hence, $\alpha_p = \beta_p$.

\Cref{soft-thin} is an immediate consequence of~\cref{soft-domain}.
\end{proof}

\begin{proposition}\label{p:sheaf-to-monot-map}
	Let $(F,\phi)\in \sKShr{A}(P,\cat{C})$ and define the monotone map
	\[
	H_F \colon P^\op \to \RegEpi{A}, \ \ p\mapsto (A \xrightarrow{\phi} F(\top) \xrightarrow{\ros{F}{\top}{ p}} F(p)).
	\]
	The following statements hold:
	\begin{enumerate}[label=(\alph*)]
	\item\label{gammastar-surj} $\gamma_* H_F = F$.
	\item\label{HF-preservation} $H_F$ preserves finite infima and arbitrary suprema, and its image consists of pairwise ker-commuting elements.
	\end{enumerate}
\end{proposition}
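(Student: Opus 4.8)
The plan is to prove \cref{gammastar-surj} by a direct comparison of functors, and then to obtain \cref{HF-preservation} as a formal consequence of \cref{cor:gamma-star-H-sheaf-repr}.

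Before anything else I would record why $H_F$ is well defined as a map into $\RegEpi{A}$, since this is the one point where softness enters. For each $p\in P$ the restriction $\ros{F}{\top}{p}$ is a regular epimorphism because $F$ is soft (\cref{l:soft-weakening}), and $\phi$ is an isomorphism; hence $H_F(p)=\ros{F}{\top}{p}\circ\phi$ is a composite of regular epimorphisms and therefore belongs to $\RegEpi{A}$ by \cref{composition} in \cref{regular-properties}.

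For \cref{gammastar-surj}, the approach is to check that $\gamma_* H_F$ and $F$ agree on objects and on arrows. On objects, $\gamma_* H_F(p)=\gamma(H_F(p))$ is by definition the codomain of $H_F(p)$, namely $F(p)$. On arrows, given $p\leq q$ in $P$, the functor $H_F$ sends the arrow $q\to p$ of $P^\op$ to the unique morphism $h\colon F(q)\to F(p)$ satisfying $h\circ H_F(q)=H_F(p)$; since functoriality of $F$ gives $\ros{F}{q}{p}\circ H_F(q)=\ros{F}{q}{p}\circ\ros{F}{\top}{q}\circ\phi=\ros{F}{\top}{p}\circ\phi=H_F(p)$ and $H_F(q)$ is an epimorphism, this morphism is exactly $\ros{F}{q}{p}$. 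Applying $\gamma$ returns $\ros{F}{q}{p}$, which is the action of $F$ on $q\to p$, so the two functors coincide. The same computation incidentally witnesses that $H_F$ is monotone, so it is a legitimate functor $P^\op\to\RegEpi{A}$.

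For \cref{HF-preservation}, the plan is simply to apply \cref{cor:gamma-star-H-sheaf-repr} to $H_F$: its first condition is verbatim the assertion of \cref{HF-preservation}, so it suffices to verify its second condition. By \cref{gammastar-surj} we have $\gamma_* H_F=F$, which is a $\K$-sheaf because $(F,\phi)$ is a $\K$-sheaf representation; and $H_F(\top)=\ros{F}{\top}{\top}\circ\phi=\mathrm{id}_{F(\top)}\circ\phi=\phi$ is an isomorphism by hypothesis. Both clauses of the second condition therefore hold, and \cref{cor:gamma-star-H-sheaf-repr} delivers \cref{HF-preservation}. I do not expect a genuine obstacle here: once $\gamma_* H_F=F$ is established, the preservation properties are a packaged consequence of the earlier characterisation. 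The only points demanding care are bookkeeping—using softness to guarantee that $H_F$ lands in $\RegEpi{A}$, identifying the arrow-part of $\gamma_* H_F$ through the epimorphism $H_F(q)$, and keeping track of which element $\top$ denotes so that $H_F(\top)$ is seen to equal $\phi$.
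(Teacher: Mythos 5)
Your proposal is correct and follows essentially the same route as the paper: part \ref{gammastar-surj} is verified by identifying the arrow-part of $\gamma_* H_F$ via the epimorphism $H_F(q)$, and part \ref{HF-preservation} is deduced from \cref{cor:gamma-star-H-sheaf-repr} using $\gamma_* H_F = F$ and $H_F(\top)=\phi$. Your preliminary remark that softness is what guarantees $H_F$ lands in $\RegEpi{A}$ is a point the paper leaves implicit, and it is a worthwhile observation.
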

\begin{proof}
For \cref{gammastar-surj}, note that an arrow $p\to q$ in $P$ is sent by $\gamma_* H_F$ to the unique arrow $h\colon F(q)\to F(p)$ in $\cat{C}$ such that the following diagram commutes.
\[\begin{tikzcd}
& A \arrow[twoheadrightarrow]{dl}[swap]{H_F(q)} \arrow[twoheadrightarrow]{dr}{H_F(p)} & \\
F(q) \arrow{rr}{h} & & F(p)
\end{tikzcd}\]
It follows that $h = {\ros{F}{q}{p}}$ because
\[
{\ros{F}{q}{p}} \circ H_F(q) = {\ros{F}{q}{p}} \circ {\ros{F}{\top}{p}} \circ \phi =  {\ros{F}{\top}{p}} \circ \phi  = H_F(p),
\]
and so $\gamma_* H_F = F$.

\Cref{HF-preservation} is an immediate consequence of \cref{gammastar-surj} combined with \cref{cor:gamma-star-H-sheaf-repr}. Just observe that $H_F(\top)=\phi$ is an isomorphism.
\end{proof}

The following is our main result concerning soft $\K$-sheaf representations of objects of regular categories. Recall that $P$ denotes an arbitrary complete lattice, and $A$ an object of a regular category~$\cat{C}$.
\begin{theorem}\label{th:equiv-soft-sheaf-repr}
Let $\cat{M}$ be the (large) sub-preorder of $[P^\op,\RegEpi{A}]$ consisting of those maps that preserve finite infima and arbitrary suprema, and whose images consist of pairwise ker-commuting elements. The functor $\gamma_* \colon [P^\op,\RegEpi{A}] \to [P^\op, \cat{C}]$ in~\cref{eq:gamma-direct-image} induces an isomorphism
\[
\cat{M} \cong \sKShr{A}(P,\cat{C}).
\]
\end{theorem}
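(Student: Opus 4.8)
The plan is to exhibit explicit functors in both directions and check that they are mutually inverse. Since $\sKShr{A}(P,\cat{C})$ is a (large) preorder by \cref{l:soft-repr-thin}, and $\cat{M}$ is a sub-preorder of the large preorder $[P^\op,\RegEpi{A}]$, an isomorphism of categories will follow once I produce a bijection on objects that is monotone in both directions; all equalities between parallel morphisms are then automatic by thinness.

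First I would show that $\gamma_*$ restricts to a functor $\Phi\colon\cat{M}\to\sKShr{A}(P,\cat{C})$, $H\mapsto(\gamma_* H,H(\top))$. For $H\in\cat{M}$, \cref{cor:gamma-star-H-sheaf-repr} gives at once that $\gamma_* H$ is a $\K$-sheaf and that $H(\top)\colon A\to\gamma_* H(\top)$ is an isomorphism, so $(\gamma_* H,H(\top))$ is a $\K$-sheaf representation. The only point not already contained in that corollary is softness. This follows from the cancellation property: monotonicity of $H$ yields an arrow $H(\top)\to H(p)$ in $\RegEpi{A}$, i.e.\ a $\cat{C}$-morphism $\ros{\gamma_* H}{\top}{p}\colon\gamma_* H(\top)\to\gamma_* H(p)$ with $\ros{\gamma_* H}{\top}{p}\circ H(\top)=H(p)$; since $H(p)$ is a regular epimorphism, \cref{cancellation} in \cref{regular-properties} forces $\ros{\gamma_* H}{\top}{p}$ to be a regular epimorphism, whence $\gamma_* H$ is soft by \cref{l:soft-weakening}. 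On morphisms, the underlying $\cat{C}$-morphisms of a natural transformation $\alpha\colon H\Rightarrow H'$ satisfy $\alpha_p\circ H(p)=H'(p)$ by definition of the arrows of $\RegEpi{A}$; taking $p=\top$ gives $\alpha_\top\circ H(\top)=H'(\top)$, which is precisely the compatibility condition making $\gamma_*\alpha$ a morphism in $\KShr{A}(P,\cat{C})$.

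Next I would use \cref{p:sheaf-to-monot-map} to define the candidate inverse $\Psi\colon\sKShr{A}(P,\cat{C})\to\cat{M}$, $(F,\phi)\mapsto H_F$, with $H_F(p)=\ros{F}{\top}{p}\circ\phi$. That proposition supplies both that $H_F\in\cat{M}$ (item \ref{HF-preservation}) and that $\gamma_* H_F=F$ (item \ref{gammastar-surj}). Monotonicity of $\Psi$ is a short naturality computation: for a morphism $\alpha\colon(F,\phi)\to(G,\psi)$, naturality of $\alpha$ together with $\alpha_\top\circ\phi=\psi$ gives $\alpha_p\circ H_F(p)=\alpha_p\circ\ros{F}{\top}{p}\circ\phi=\ros{G}{\top}{p}\circ\alpha_\top\circ\phi=\ros{G}{\top}{p}\circ\psi=H_G(p)$, so $\alpha_p$ witnesses $H_F(p)\le H_G(p)$ in $\RegEpi{A}$ for every $p$, i.e.\ $H_F\le H_G$.

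Finally I would verify that $\Phi$ and $\Psi$ are mutually inverse on objects. On the one hand $\Phi\Psi(F,\phi)=(\gamma_* H_F,H_F(\top))=(F,\phi)$, using $\gamma_* H_F=F$ and $H_F(\top)=\ros{F}{\top}{\top}\circ\phi=\phi$. On the other hand $\Psi\Phi(H)=H_{\gamma_* H}$, and for each $p$ one has $H_{\gamma_* H}(p)=\ros{\gamma_* H}{\top}{p}\circ H(\top)=H(p)$ by the defining identity of $\ros{\gamma_* H}{\top}{p}$ recalled above, so $H_{\gamma_* H}=H$. Because both categories are thin, these object-level inverse bijections together with the two-sided monotonicity already established upgrade to an isomorphism of categories $\cat{M}\cong\sKShr{A}(P,\cat{C})$. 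I expect no serious obstacle: the substantive content has been front-loaded into \cref{cor:gamma-star-H-sheaf-repr} and \cref{p:sheaf-to-monot-map}, and the only genuinely new ingredient is the softness verification, which is the one-line cancellation argument above.
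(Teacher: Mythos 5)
Your proposal is correct and follows essentially the same route as the paper: the forward functor $H\mapsto(\gamma_* H,H(\top))$ justified by \cref{cor:gamma-star-H-sheaf-repr} (with softness from the cancellation property), and surjectivity/invertibility via $H_F$ from \cref{p:sheaf-to-monot-map}. The only cosmetic difference is that you package the order-reflection step as monotonicity of the explicit inverse $\Psi$, whereas the paper verifies directly that $\Xi$ is an order embedding; these are equivalent given thinness of both categories.
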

\begin{proof}
Define the functor 
\[
\Xi\colon \cat{M} \to \sKShr{A}(P,\cat{C}), \ \ H \mapsto (\gamma_* H, H(\top)).
\]
An arrow $\alpha\colon H\Rightarrow J$ in $\cat{M}$ is sent to $\gamma_* \alpha$. 
By \cref{cor:gamma-star-H-sheaf-repr}, the presheaf $\gamma_* H$ is a (soft) $\K$-sheaf and $H(\top)$ is an isomorphism in $\cat{C}$.
Thus, $(\gamma_* H, H(\top))$ is a soft $\K$-sheaf representation of $A$. If $\alpha\colon H\Rightarrow J$ is an arrow in $\cat{M}$ then $\alpha_{\top}\circ H(\top) = J(\top)$ in $\cat{C}$, and so $(\gamma_*\alpha)_{\top} \circ H(\top) = J(\top)$. Therefore, the functor $\Xi$ is well-defined.

In view of \cref{soft-thin} in \cref{l:soft-repr-thin}, $\Xi$ is a functor between (large) preorders, i.e.\ a monotone map. We claim that $\Xi$ is an isomorphism of categories, i.e.\ an order isomorphism. To show that $\Xi$ is an order embedding, we must prove that there is an arrow $H\Rightarrow J$ in $\cat{M}$ whenever there is an arrow $\delta\colon (\gamma_* H, H(\top))\to (\gamma_* J, J(\top))$ in $\sKShr{A}(P,\cat{C})$. For all $p\in P$ we have a commutative diagram as displayed below.
\[\begin{tikzcd}
& A \arrow{dl}[swap]{H(\top)} \arrow{dr}{J(\top)} & \\
\gamma_* H(\top) \arrow{rr}{(\gamma_* \delta)_{\top}} \arrow{d}[swap]{\ros{\gamma_* H}{\top}{p}} & & \gamma_* J(\top) \arrow{d}{\ros{\gamma_* J}{\top}{p}} \\
\gamma_* H(p) \arrow{rr}{(\gamma_* \delta)_{p}} & & \gamma_* J(p)
\end{tikzcd}\]
Just observe that the triangle commutes because $\delta$ is a morphism of soft $\K$-sheaf representations, and the rectangle commutes by naturality of $\gamma_* \delta$. The composites 
\[
\ros{\gamma_* H}{\top}{p} \circ H(\top) \ \ \text{ and } \ \ \ros{\gamma_* J}{\top}{p} \circ J(\top)
\]
coincide with $H(p)$ and $J(p)$, respectively. This shows that $H(p)$ is below $J(p)$ for all $p\in P$, so there is an arrow $H\Rightarrow J$ in $\cat{M}$.

Next, we show that $\Xi$ is surjective, hence an order isomorphism. Fix an arbitrary $(F,\phi)\in \sKShr{A}(P,\cat{C})$ and consider the monotone map $H_F\colon P^\op\to\RegEpi{A}$ defined in \cref{p:sheaf-to-monot-map}. Then $H_F\in \cat{M}$ by \cref{HF-preservation} in \cref{p:sheaf-to-monot-map}, so we can consider its image $\Xi(H_F)=(\gamma_* H_F, H_F(\top))$. In view of \cref{gammastar-surj} in \cref{p:sheaf-to-monot-map} we have $\gamma_* H_F = F$. Moreover $H_F(\top)=\phi$, showing that $\Xi(H_F)=(F,\phi)$.
\end{proof}

The isomorphism of categories $\cat{M} \cong \sKShr{A}(P,\cat{C})$ in \cref{th:equiv-soft-sheaf-repr} can be equivalently understood as an order isomorphism between (large) preorders, which in turn induces an order isomorphism between the corresponding poset reflections. We state this observation in the following corollary in the special case of Barr-exact categories, where quotients can be replaced with equivalence relations. Let us write $\llbracket \sKShr{A}(P,\cat{C}) \rrbracket$ for the poset reflection of $\sKShr{A}(P,\cat{C})$; the objects of $\llbracket \sKShr{A}(P,\cat{C}) \rrbracket$ are isomorphism classes of soft $\K$-sheaf representations of $A$ over $P$.

\begin{corollary} \label{cor:MAIN}
Assume $\cat{C}$ is Barr-exact. 
Let $\cat{N}$ be the (large) sub-poset of $[P^\op,\Equiv{A}]$ consisting of those maps that preserve finite infima and arbitrary suprema, and whose images consist of pairwise commuting equivalence relations. There is an order isomorphism
\[
\cat{N} \cong \llbracket \sKShr{A}(P,\cat{C}) \rrbracket.
\]
\end{corollary}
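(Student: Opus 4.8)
The plan is to deduce the statement from \cref{th:equiv-soft-sheaf-repr} by passing to poset reflections and then transporting the data along the kernel-pair isomorphism available in the Barr-exact setting. By \cref{th:equiv-soft-sheaf-repr}, the functor $\gamma_*$ restricts to an isomorphism of (large) preorders $\cat{M}\cong\sKShr{A}(P,\cat{C})$. Poset reflection is functorial and sends isomorphisms of preorders to order isomorphisms of the resulting posets, so it suffices to produce an order isomorphism $\cat{N}\cong\llbracket\cat{M}\rrbracket$; composing it with the induced $\llbracket\cat{M}\rrbracket\cong\llbracket\sKShr{A}(P,\cat{C})\rrbracket$ then yields the claim.

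To construct $\cat{N}\cong\llbracket\cat{M}\rrbracket$ I would use the assignment $f\mapsto\ker f$ sending a regular epimorphism to its kernel pair. Since $\sim$-equivalent regular epimorphisms have equal kernel pairs, this factors through the poset reflection $\RegEpi{A}\to(\Quo{A})^\op$ and coincides there with the order embedding $\ker\colon(\Quo{A})^\op\to\Equiv{A}$ of \cref{l:ker-order-emb}; because $\cat{C}$ is Barr-exact, this embedding is in fact an order isomorphism (\cref{rem:Barr-exact-ker-iso}). Post-composing a monotone map $H\colon P^\op\to\RegEpi{A}$ with $\ker$ gives a monotone map $\ker\circ H\colon P^\op\to\Equiv{A}$, and I would check that $\ker\circ H$ lands in $\cat{N}$ whenever $H\in\cat{M}$: the reflection $\RegEpi{A}\to(\Quo{A})^\op$ preserves existing infima and suprema and $\ker$ is an order isomorphism, so $\ker\circ H$ preserves finite infima and arbitrary suprema; and in a Barr-exact category $f$ and $g$ ker-commute exactly when $\ker f$ and $\ker g$ commute, so the image of $\ker\circ H$ consists of pairwise commuting equivalence relations. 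This assignment factors through $\llbracket\cat{M}\rrbracket$, yielding a monotone map $\overline{\Phi}\colon\llbracket\cat{M}\rrbracket\to\cat{N}$.

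It remains to verify that $\overline{\Phi}$ is an order isomorphism. Order-reflectivity (hence injectivity) is immediate: if $\ker\circ H\leq\ker\circ J$ pointwise then, because $\ker$ is an order embedding and the poset reflection reflects the order, $H(p)\leq J(p)$ for all $p\in P$, i.e.\ $H\leq J$ in $\cat{M}$. For surjectivity, given $N\in\cat{N}$ I would set $G\coloneqq\ker^{-1}\circ N\colon P^\op\to(\Quo{A})^\op$ and lift $G$ to a monotone map $H\colon P^\op\to\RegEpi{A}$ by choosing, for each quotient, a representing regular epimorphism; then $\ker\circ H=N$ by construction.

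I expect this lifting step to be the main obstacle: one must ensure that passing from quotients back to regular epimorphisms can be done monotonically and compatibly with the preservation properties. The key observation that makes it work is that $\RegEpi{A}\to(\Quo{A})^\op$ is order-reflecting, so \emph{any} choice of representatives is automatically monotone, and since this map both preserves and reflects infima and suprema, the lift $H$ again preserves finite infima and arbitrary suprema and has image consisting of pairwise ker-commuting elements. Everything else reduces to the formal behaviour of poset reflections together with the order isomorphism $\ker\colon(\Quo{A})^\op\cong\Equiv{A}$ supplied by Barr-exactness.
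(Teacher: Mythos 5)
Your proposal is correct and follows essentially the same route as the paper: first pass the isomorphism $\cat{M}\cong\sKShr{A}(P,\cat{C})$ of \cref{th:equiv-soft-sheaf-repr} to poset reflections, then identify $\llbracket\cat{M}\rrbracket$ with $\cat{N}$ via the order isomorphism $\ker\colon(\Quo{A})^\op\to\Equiv{A}$ supplied by Barr-exactness, using that ker-commutation corresponds to commutation of equivalence relations. The paper states the lifting/representative-choice step more tersely (as the identification of the poset reflection of $[P^\op,\RegEpi{A}]$ with $[P^\op,\Equiv{A}]$), but your more explicit treatment of it is exactly what is implicitly required.
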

\begin{proof}
It follows at once from \cref{th:equiv-soft-sheaf-repr} that there is an order isomorphism
\[
\llbracket \cat{M} \rrbracket \cong \llbracket \sKShr{A}(P,\cat{C}) \rrbracket
\]
between the poset reflections of $\cat{M}$ and $\sKShr{A}(P,\cat{C})$, respectively. So, it suffices to show that $\llbracket \cat{M} \rrbracket \cong \cat{N}$.
The poset reflection $\llbracket \RegEpi{A} \rrbracket$ of $\RegEpi{A}$ coincides with the opposite of $\Quo{A}$. If $\cat{C}$ is Barr-exact, the map $\ker\colon (\Quo{A})^\op\to \Equiv{A}$ is an order isomorphism (see \cref{rem:Barr-exact-ker-iso}), and so $\llbracket \RegEpi{A} \rrbracket$ is isomorphic to $\Equiv{A}$. Thus, the poset reflection of $[P^\op,\RegEpi{A}]$ is isomorphic to $[P^\op,\Equiv{A}]$. This isomorphism restricts to an isomorphism between $\llbracket \cat{M} \rrbracket$ and $\cat{N}$ because, under the isomorphism between $\llbracket \RegEpi{A} \rrbracket$ and $\Equiv{A}$, the (isomorphism classes of) ker-commuting elements of $\RegEpi{A}$ correspond to commuting elements of $\Equiv{A}$.
\end{proof}

\begin{remark}
If we required that $F(\bot)$ be a terminal object in the definition of $\K$-sheaf, then \cref{cor:MAIN} (as well as \cref{th:equiv-soft-sheaf-repr}) would no longer be true. Just observe that, if $\cat{C}=\Set$ and $A=\emptyset$, then both $\Equiv{A}$ and $[P^\op, \Equiv(A)]$ are one-element posets, and thus so is $\cat{N}$. By \cref{cor:MAIN}, there is exactly one (isomorphism class of) soft $\K$-sheaf representation of $A$ over $P$, given by the constant functor $F \colon P^\op \to \Set$ defined by $F(p)=\emptyset$ for all $p\in P$. But $F(\bot)=\emptyset$ is a proper subterminal object in $\Set$.
\end{remark}

\begin{remark}\label{rem:well-powered}
	Suppose that the category $\cat{C}$ is \emph{well-powered}, i.e.\ $\Sub{A}$ is a set---as opposed to a proper class---for all objects $A$ of $\cat{C}$. Then $\Equiv{A}$ is a small poset, and the isomorphism in \cref{cor:MAIN} is an isomorphism between small posets. This is the case, for example, when $\cat{C}$ is $\Set$ or a variety of (possibly infinitary) algebras.
\end{remark}

\section{Domains and Continuous Lattices}\label{s:domains}
In this brief interlude, we shall recall some basic material concerning domains and continuous lattices that will be needed in \cref{s:K-vs-Omega}. For a more thorough treatment, the reader can consult e.g.\ \cite{GierzHofmannKeimelEtAl2003}.

Let $P$ be a poset. The \emph{way-below} relation on $P$, denoted by $\ll$, is defined as follows: for all $x,y\in P$, $x\ll y$ precisely when, for all directed subsets $D\subseteq P$ admitting a supremum $\sup{D}$ in $P$, if $y \leq \sup{D}$ there is $d\in D$ such that $x \leq d$. 

\begin{remark}
	If $P$ is a complete lattice and $x,y\in P$, then $x\ll y$ if and only if, for all subsets $Y\subseteq P$, whenever $y\leq \sup{Y}$ there is a finite subset $X\subseteq Y$ such that $x\leq \sup{X}$.
\end{remark}

\begin{example}
Recall that a topological space is \emph{locally compact} if each of its points admits a compact neighbourhood.
Let $X$ be a locally compact space and consider its frame of opens $\Omega(X)$. For all $U,V\in\Omega(X)$, $U\ll V$ if and only if there is a compact subset $C\subseteq X$ such that $U\subseteq C \subseteq V$. 
\end{example}

\begin{definition}
For any poset $P$, we shall say that:
	\begin{itemize}[leftmargin=*]
		\item $P$ is a \emph{directed complete partially ordered set} (\emph{dcpo}, for short) if every directed subset $D\subseteq P$ has a supremum in $P$;
		\item $P$ is \emph{continuous} if, for all $x\in P$, the set \[\dd x \coloneqq \{y\in P\mid y\ll x\}\] is directed and $x = \sup{\dd x}$.
	\end{itemize}
	A dcpo that is continuous is called a \emph{domain}.
	A \emph{continuous lattice} is a domain that is complete as a lattice. 
\end{definition}

\begin{remark}\label{rem:fin-sup-dd-x}
	Whenever the poset $P$ has a least element $\bot$, we have $\bot \ll x$ for all $x\in P$.
	Moreover, for all $x,y,z\in P$ such that the supremum $x\vee y$ exists in $P$, $x\ll z$ and $y\ll z$ entail $x\vee y\ll z$, see e.g.\ \cite[Proposition~I-1.2(iii)]{GierzHofmannKeimelEtAl2003}.
	It follows that, whenever $P$ has all finite suprema, the set $\dd x$ is directed for all $x\in P$.
\end{remark}

\begin{example}\label{ex:X-locally-comp-well-filt}
	For a locally compact space $X$, its frame of opens $\Omega(X)$ is a continuous lattice. See e.g.\ \cite[p.~56]{GierzHofmannKeimelEtAl2003}.
\end{example}

\begin{definition}
	Let $U$ be an upwards closed subset of a dcpo $P$. We say that $U$ is \emph{Scott-open} if, for all directed subsets $D\subseteq P$, $\sup{D}\in U$ entails $D \cap U \neq \emptyset$. 
\end{definition}

Recall that a \emph{filter} on a poset $P$ is an upwards closed subset of $P$ that is codirected (in particular, non-empty). Write \[\Filt(P)\] for the poset of filters on $P$, partially ordered by inclusion. $\Filt(P)$ is a dcpo, and is a complete lattice whenever $P$ has binary suprema and a top element. 
A filter on a dcpo that is Scott-open as an upwards closed subset is called a \emph{Scott-open filter}. The set of all Scott-open filters on a dcpo $P$ is denoted by \[\Scott(P)\] and regarded as a sub-poset of $\Filt(P)$. The poset $\Scott(P)$ is often referred to as the \emph{Lawson dual} of $P$; it is always a dcpo, with directed suprema given by set-theoretic unions, and is a domain whenever $P$ is a domain (see e.g.\ \cite[Theorem~II-1.17]{GierzHofmannKeimelEtAl2003}). Further, if $P$ admits finite infima, $\Scott(P)$ has finite infima given by set-theoretic intersections.

We now recall some basic properties of Scott-open filters on a domain. To this end, for any element $x$ of a poset $P$, set $\u x \coloneqq \{y \in P \mid x \leq y\}$.
\begin{lemma}\label{l:Scott-filters-prop}
	Let $L$ be a domain. The following statements hold:
	\begin{enumerate}[label=(\alph*)]
		\item\label{L-way-below-Scott} For all $x,y\in L$, $y \ll x$ precisely when there is $k\in\Scott(L)$ such that $x\in k \subseteq \u y$.
		\item\label{Scott-round-filter} For all $x\in L$ and all $k\in \Scott(L)$, if $x\in k$ there is $y\ll x$ such that $y\in k$.
		\item\label{Scott-way-below} For all $k,l\in\Scott(L)$, $l\ll k$ precisely when there is $x\in k$ such that $l\subseteq \u x$.
		\item\label{Scott-round-filter-cons} For all $x\in L$ and all $k\in \Scott(L)$, if $x\in k$ there is $l\ll k$ such that $x\in l$.
		\item\label{U(s)-codirected} For all $x \in L$, the set $\{k \in \Scott(L) \mid x \in k\}$ is codirected.
	\end{enumerate} 
\end{lemma}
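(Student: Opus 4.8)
The plan is to treat \cref{l:Scott-filters-prop} as the standard package of facts underpinning Lawson duality, with the whole argument driven by two tools available in any domain: the interpolation property of the way-below relation ($y \ll x$ implies $y \ll z \ll x$ for some $z$), and continuity in the form $x = \sup \dd x$ with $\dd x$ directed. The backward implications and items \cref{Scott-round-filter,Scott-round-filter-cons,U(s)-codirected} are short consequences of these together with the fact recalled above that directed suprema in $\Scott(L)$ are unions; the substantive work is the existence statement in the forward direction of \cref{L-way-below-Scott}, while the forward direction of \cref{Scott-way-below} is where I expect the only real obstacle.

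For the easy direction of \cref{L-way-below-Scott}, given a Scott-open filter $k$ with $x \in k \subseteq \u y$ and a directed $D$ with $x \leq \sup D$, upward-closure puts $\sup D \in k$, Scott-openness yields $d \in D \cap k$, and $k \subseteq \u y$ gives $y \leq d$; hence $y \ll x$. The real direction is to manufacture such a filter from $y \ll x$. Here I would iterate interpolation to build a descending chain $x \gg z_1 \gg z_2 \gg \cdots$ with $y \ll z_{n+1} \ll z_n$ for all $n$: take $y \ll z_1 \ll x$, and given $z_n$ with $y \ll z_n$ interpolate $y \ll z_{n+1} \ll z_n$. Since $z_{n+1} \leq z_n$ forces $\uu{z_n} \subseteq \uu{z_{n+1}}$, the sets $\uu{z_n}$ form an increasing chain and I set $k \coloneqq \bigcup_n \uu{z_n}$. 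I would then verify $k$ is a Scott-open filter: it is upward closed by construction; it is Scott-open because $\sup D \in \uu{z_n}$ gives, via interpolation $z_n \ll w \ll \sup D$, some $d \in D$ with $z_n \ll w \leq d$, so $d \in \uu{z_n} \subseteq k$; and it is codirected because for $u, v \in k$ a common index $N$ gives $z_N \ll u, v$, whence $z_{N+1} \leq z_N \leq u, v$ and $z_{N+1} \in \uu{z_{N+2}} \subseteq k$ is a lower bound lying in $k$. Finally $x \in \uu{z_1} \subseteq k$, and for $u \in k$ one has $y \leq z_n \leq u$, so $k \subseteq \u y$.

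With \cref{L-way-below-Scott} in hand the remaining items fall out quickly. For \cref{Scott-round-filter}, continuity gives $\sup \dd x = x \in k$ with $\dd x$ directed, so Scott-openness forces $\dd x \cap k \neq \emptyset$, i.e.\ some $y \ll x$ with $y \in k$. For the easy direction of \cref{Scott-way-below}, if $x \in k$ and $l \subseteq \u x$, then for any directed family in $\Scott(L)$ whose supremum (a union) contains $k$ the point $x$ lands in one member $d$, and $l \subseteq \u x \subseteq d$; this says exactly $l \ll k$. \cref{Scott-round-filter-cons} combines these: given $x \in k$, use \cref{Scott-round-filter} for $y \ll x$ with $y \in k$, then \cref{L-way-below-Scott} for a Scott-open filter $l$ with $x \in l \subseteq \u y \subseteq k$, and since $y \in k$ witnesses $l \subseteq \u y$, the easy direction of \cref{Scott-way-below} gives $l \ll k$ with $x \in l$. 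For \cref{U(s)-codirected}, non-emptiness follows from \cref{L-way-below-Scott} applied to any $y \ll x$ (which exists as $\dd x$ is directed, hence non-empty); and given $k_1, k_2$ containing $x$, \cref{Scott-round-filter} supplies $y_i \ll x$ with $y_i \in k_i$, directedness of $\dd x$ supplies $y \ll x$ above both (so $y \in k_1 \cap k_2$ by upward-closure), and \cref{L-way-below-Scott} produces a Scott-open filter $l$ with $x \in l \subseteq \u y \subseteq k_1 \cap k_2$, a common lower bound.

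The main obstacle is the forward direction of \cref{Scott-way-below}: from $l \ll k$ in $\Scott(L)$ I must extract a single $x \in k$ with $l \subseteq \u x$, i.e.\ a lower bound of $l$ inside $k$. The natural strategy is to write $k$ as a directed union in $\Scott(L)$ of filters each possessing a lower bound in $k$ — the family $\mathcal{G} \coloneqq \{m \in \Scott(L) \mid \exists x \in k,\ m \subseteq \u x\}$ has union $k$ by \cref{Scott-round-filter} and \cref{L-way-below-Scott} — and then apply $l \ll k$ to trap $l$ inside one such $m \subseteq \u x$. The difficulty is that, when $L$ lacks binary meets, unions of Scott-open filters need not be filters, so $\mathcal{G}$ is not visibly directed; establishing this is essentially the computation of the way-below relation of the Lawson dual. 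I would therefore lean on continuity of $\Scott(L)$ (recalled above, via \cite[Theorem~II-1.17]{GierzHofmannKeimelEtAl2003}) to supply the directed approximating family, or cite the standard description of $\ll$ on $\Scott(L)$ directly; this is the one place where I expect to invoke the external theory rather than a self-contained construction.
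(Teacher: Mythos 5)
Your proposal is correct, and on the two items the paper actually proves in detail --- \cref{Scott-round-filter-cons} and \cref{U(s)-codirected} --- your derivations coincide with the paper's: both obtain \cref{Scott-round-filter-cons} by chaining \cref{Scott-round-filter}, then \cref{L-way-below-Scott}, then the easy direction of \cref{Scott-way-below}, and both obtain \cref{U(s)-codirected} by pushing the witnesses $y_i\ll x$ up to a common $y\in\dd x$ and applying \cref{L-way-below-Scott}. The difference is in the first three items, which the paper simply outsources to the Compendium (Proposition~I-3.3(i) for the forward direction of \cref{L-way-below-Scott}, Proposition~II-1.10(i) for \cref{Scott-round-filter}, Theorem~II-1.17(ii) for \cref{Scott-way-below}): you instead give self-contained arguments, and these are sound. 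Your interpolation-chain construction $k=\bigcup_n\uu z_n$ with $y\ll\cdots\ll z_2\ll z_1\ll x$ is exactly the standard proof of the cited Proposition~I-3.3(i), and all the verifications (upward closure, codirectedness via a deeper term of the chain, Scott-openness via one more interpolation, and $x\in k\subseteq\u y$) go through. Your one-line proofs of \cref{Scott-round-filter} (Scott-openness applied to $\dd x$) and of the backward direction of \cref{Scott-way-below} (using that directed suprema in $\Scott(L)$ are unions) are likewise correct. You rightly identify the forward direction of \cref{Scott-way-below} as the only point where a self-contained argument is genuinely delicate --- the obstruction being that the approximating family $\mathcal{G}$ is not obviously directed when $L$ lacks binary meets --- and your fallback of citing Theorem~II-1.17 there is precisely what the paper does for the whole of \cref{Scott-way-below}. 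In short: nothing is missing, and your version trades a few citations for explicit (and standard) constructions.
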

\begin{proof}
	The left-to-right implication in \cref{L-way-below-Scott} follows from \cite[Proposition~I-3.3(i)]{GierzHofmannKeimelEtAl2003}, while the converse is a consequence of the definition of Scott-open filter.
	For \cref{Scott-round-filter,Scott-way-below}, see e.g.\ Proposition~II-1.10(i) and Theorem~II-1.17(ii), respectively, in \cite{GierzHofmannKeimelEtAl2003}.
	
	For \cref{Scott-round-filter-cons}, let $x\in L$ and $k\in \Scott(L)$ satisfy $x\in k$. By \cref{Scott-round-filter}, there is $y\ll x$ such that $y\in k$. Thus, in view of \cref{L-way-below-Scott}, there exists $l\in\Scott(L)$ such that $x\in l\subseteq \u y$. Since $y \in k$ and $l \subseteq \u y$, by \cref{Scott-way-below} we get $l\ll k$.

Finally, let us prove \cref{U(s)-codirected}. Let $S$ be a finite subset of $\{k \in \Scott(L) \mid x \in k\}$. For each $l \in S$, pick $y_l \in l$ such that $y_l \ll x$ (existence is guaranteed by \cref{Scott-round-filter}). Since $L$ is a domain, the set $\dd x$ is directed and thus there is $y \in \dd x$ such that $y_l \leq y$ for all $l \in S$. By \cref{L-way-below-Scott}, there is $k \in \Scott(L)$ such that $x \in k \subseteq \u y$. For every $l \in S$ we have $k \subseteq \u y \subseteq \u y_l \subseteq l$. Therefore, $k$ is a lower bound of $S$.
\end{proof}

\begin{remark} \label{r:U(s)-open-filter}
\Cref{Scott-round-filter-cons,U(s)-codirected} in \cref{l:Scott-filters-prop} can be strengthened to the effect that, for every element $x$ of a domain $L$, the set $\mathcal{U}(x) \coloneqq \{k \in \Scott(L) \mid x \in k\}$ is a Scott-open filter on $\Scott(L)$. In fact, the Scott-open filters on $\Scott(L)$ are precisely those of the form $\mathcal{U}(x)$, for $x \in L$, and the map $\mathcal{U}(-) \colon L \to \Scott(\Scott(L))$ is an order isomorphism. These observations are at the base of the Lawson Duality Theorem of Domains \cite[Theorem~IV-2.14]{GierzHofmannKeimelEtAl2003}. For a proof of the properties mentioned above, cf.\ \cite[\S IV-2]{GierzHofmannKeimelEtAl2003}.
\end{remark}

Given a subset $S$ of a poset $L$, set $\dd S \coloneqq \bigcup{\{\dd x \mid x \in S\}}$.

\begin{lemma} \label{l:downset}
	Let $L$ be a domain.
	For all directed subsets $D\subseteq L$, $\dd D = \dd \sup{D}$.
\end{lemma}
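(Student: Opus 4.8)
The plan is to prove the two inclusions separately, with the reverse inclusion being the substantive part. Note first that $\sup D$ exists because $L$ is a dcpo and $D$ is directed, so both sides of the claimed equality are well-defined.

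For the inclusion $\dd D \subseteq \dd \sup D$, I would argue directly. If $y \in \dd D$, then $y \ll x$ for some $x \in D$; since $x \leq \sup D$ and the way-below relation is compatible with the order in the sense that $y \ll x \leq z$ implies $y \ll z$ (a standard fact, cf.\ \cite[Proposition~I-1.2]{GierzHofmannKeimelEtAl2003}), we obtain $y \ll \sup D$, i.e.\ $y \in \dd \sup D$.

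For the reverse inclusion $\dd \sup D \subseteq \dd D$, the key step is to show that $\dd D$ is itself a directed set whose supremum is $\sup D$; once this is in place, the definition of $\ll$ finishes the argument. First I would verify directedness of $\dd D$: given $y_1 \in \dd d_1$ and $y_2 \in \dd d_2$ with $d_1, d_2 \in D$, directedness of $D$ yields $d_3 \in D$ above both, whence $y_1, y_2 \ll d_3$ by the compatibility property above; since $L$ is continuous, $\dd d_3$ is directed and therefore contains an upper bound of $\{y_1, y_2\}$, which lies in $\dd D$. Next I would compute the supremum: because $L$ is a domain, $\sup \dd d = d$ for each $d \in D$, so $\sup \dd D \geq \sup D$, while every element of $\dd D$ lies below $\sup D$ by the first inclusion, giving $\sup \dd D = \sup D$. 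To conclude, take $y \in \dd \sup D$, so $y \ll \sup D = \sup \dd D$; applying the definition of $\ll$ to the directed set $\dd D$ produces $e \in \dd D$ with $y \leq e$, and since $e \ll d$ for some $d \in D$, the relation $y \leq e \ll d$ forces $y \ll d$, so $y \in \dd d \subseteq \dd D$.

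The hard part will be the reverse inclusion, specifically the verification that $\dd D$ is directed with supremum $\sup D$; this is precisely where continuity of $L$ (equivalently, the interpolation property of $\ll$) is indispensable. An alternative route to the same inclusion would invoke interpolation directly: $y \ll \sup D$ yields $w$ with $y \ll w \ll \sup D$, and applying $w \ll \sup D$ to the directed set $D$ gives $d \in D$ with $w \leq d$, whence $y \ll w \leq d$. I prefer the first route, as it isolates the directedness of $\dd D$ as a clean auxiliary fact.
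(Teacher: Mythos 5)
Your proof is correct and follows the same decomposition as the paper: the forward inclusion via the standard fact that $y \ll x \leq z$ implies $y \ll z$, and the reverse inclusion via the property that, in a continuous poset, $y \ll \sup D$ for directed $D$ yields $d \in D$ with $y \ll d$. The only difference is that the paper simply cites this last property (Theorem~I-1.9 of Gierz et al.), whereas you prove it from scratch by showing $\dd D$ is directed with supremum $\sup D$ --- a correct, self-contained unfolding of the cited result.
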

\begin{proof}
	The inclusion $\dd D \subseteq \dd \sup{D}$ is immediate because $x \ll y \leq z$ implies $x \ll z$. For the converse inclusion just recall that, in a continuous poset, $x \ll \sup{D}$ implies the existence of $d\in D$ such that $x \ll d$, see e.g.\ \cite[Theorem~I-1.9]{GierzHofmannKeimelEtAl2003}.
\end{proof}

We record for future reference the following fact, which is known as \emph{Wilker's condition} when $L=\Omega(X)$ for a locally compact space $X$ (cf.~\cite{KL2005} or \cite[Lemma~2.3]{GehrkeGool2018}).
\begin{lemma}\label{l:wilker}
Let $L$ be a continuous lattice.
For all $x,y\in L$ and $l\in\Scott(L)$ such that $x\vee y\in l$, there are $k,k'\in\Scott(L)$ such that $x\in k$, $y\in k'$ and $k\wedge k'\subseteq l$. 
\end{lemma}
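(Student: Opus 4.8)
The plan is to decouple the single hypothesis $x\vee y\in l$ into two separate membership conditions by exhibiting $x\vee y$ as a suitable \emph{directed} supremum and then invoking Scott-openness of $l$. Since $L$ is a continuous lattice, $\dd x$ and $\dd y$ are directed with $x=\sup\dd x$ and $y=\sup\dd y$. I would therefore consider the family
\[
D \coloneqq \{a\vee b \mid a\ll x, \ b\ll y\}.
\]
This family is directed: given $a_i\ll x$ and $b_i\ll y$ for $i=1,2$, directedness of $\dd x$ and $\dd y$ furnishes $a\ll x$ above $a_1,a_2$ and $b\ll y$ above $b_1,b_2$, so that $a\vee b\in D$ dominates $a_1\vee b_1$ and $a_2\vee b_2$; it is non-empty since $\bot\ll x$ and $\bot\ll y$ by \cref{rem:fin-sup-dd-x}. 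A short computation gives $\sup D=x\vee y$: each $a\vee b\le x\vee y$, while $a\le a\vee b$ and $b\le a\vee b$ force $x,y\le\sup D$.

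Now, since $x\vee y\in l$, $l$ is Scott-open, and $x\vee y=\sup D$ with $D$ directed, the definition of Scott-openness yields an element of $D\cap l$; that is, there exist $a\ll x$ and $b\ll y$ with $a\vee b\in l$. Applying \cref{L-way-below-Scott} to $a\ll x$ and to $b\ll y$ produces Scott-open filters $k,k'\in\Scott(L)$ with $x\in k\subseteq\u a$ and $y\in k'\subseteq\u b$, giving at once $x\in k$ and $y\in k'$. For the remaining condition $k\wedge k'\subseteq l$, recall that the meet in $\Scott(L)$ is computed as intersection: any $z\in k\cap k'$ satisfies $a\le z$ and $b\le z$, hence $a\vee b\le z$, and since $a\vee b\in l$ with $l$ upward closed we conclude $z\in l$. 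Thus $k\wedge k'=k\cap k'\subseteq l$, as required.

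The only genuinely delicate point is recognising that $x\vee y$ is a \emph{directed} supremum of the joins $a\vee b$ with $a\ll x$ and $b\ll y$, for it is precisely this that licenses the use of Scott-openness to split the single condition $x\vee y\in l$ into the independent conditions $x\in k$ and $y\in k'$. Once the right pair $(a,b)$ with $a\vee b\in l$ has been extracted, the rest is a routine application of the interpolation property of way-below recorded in \cref{l:Scott-filters-prop}.
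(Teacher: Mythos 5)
Your proof is correct and takes essentially the same route as the paper's: both extract, via Scott-openness of $l$, elements $a\ll x$ and $b\ll y$ with $a\vee b\in l$, and then interpolate using \cref{L-way-below-Scott} in \cref{l:Scott-filters-prop} to obtain $k\subseteq\u a$ and $k'\subseteq\u b$, whose intersection lands in $l$. The only (cosmetic) difference is that you apply Scott-openness directly to the directed set $\{a\vee b\mid a\ll x,\ b\ll y\}$, whereas the paper applies the finite-subset formulation to $\dd x\cup\dd y$ and then uses closure of $\dd x$ and $\dd y$ under finite suprema (\cref{rem:fin-sup-dd-x}).
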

\begin{proof}
Let $J\coloneqq \dd x \cup \dd y$. Since $\dd x \subseteq J$ and $L$ is continuous, we get
\[
x = \sup{\dd x} \leq \sup{J}.
\]
Similarly, $y\leq \sup{J}$ and thus $x\vee y \leq \sup{J}$. As $l$ is upwards closed, $\sup{J}\in l$. Now, because $l$ is a Scott-open filter, there exists a finite subset $I \subseteq J$ such that $\sup I \in l$; since $\dd x$ and $\dd y$ are closed under finite suprema by \cref{rem:fin-sup-dd-x}, there are $x' \in \dd x$ and $y' \in \dd y$ such that 
\[
x' \vee y' = \sup I \in l.
\] 

In view of \cref{L-way-below-Scott} in \cref{l:Scott-filters-prop}, there are $k,k'\in \Scott(L)$ such that 
\[
x\in k\subseteq \u x' \ \ \text{ and } \ \ y\in k'\subseteq \u y'.
\] 
The Scott-open filters $k,k'$ then satisfy the desired property. Just observe that, if $z\in L$ belongs to both $k$ and $k'$, then $x'\le z$ and $y'\le z$ entail $x'\vee y' \le z$. As $x'\vee y'\in l$ and the latter is upwards closed, we get $z\in l$.
\end{proof}

Given a topological space~$X$, let $\K(X)$ denote the poset of compact saturated subsets of~$X$, ordered by inclusion (recall that a set is \emph{saturated} if it is an intersection of opens).
The Hofmann--Mislove theorem states that, for any \emph{sober} space $X$ (i.e., one in which every irreducible closed subset is the closure of a unique point), the monotone map
\[
\Phi\colon \K(X)^\op \to \Scott(\Omega(X)), \ \ \Phi(K)\coloneqq \{U\in \Omega(X) \mid K\subseteq U\}
\]
is an order isomorphism. Its inverse sends a Scott-open filter of open sets to its intersection. See e.g.\ \cite[Theorem~II-1.20]{GierzHofmannKeimelEtAl2003}.

Even when $P$ is a continuous lattice, $\Scott(P)$ need not be complete (equivalently, a continuous lattice) because it may fail to admit binary suprema. Thus, we shall now focus on a class of continuous lattices whose posets of Scott-open filters are complete. 
\begin{definition}
The way-below relation $\ll$ in a continuous lattice $L$ with top element $\top$ is said to be \emph{multiplicative} provided that $\top\ll \top$ and, for all $x,y,z\in L$, $x\ll y$ and $x\ll z$ entail $x\ll y\wedge z$.
\end{definition}
A continuous lattice whose way-below relation is multiplicative is called a \emph{stably continuous lattice}. If $L$ is a stably continuous lattice then so is $\Scott(L)$, see e.g.\ \cite[\S VII.2.12]{Johnstone1986}. In this case, the supremum of Scott-open filters $U,V$ is given by $\u \{u\wedge v\mid u\in U, \, v\in V\}$.

\begin{example}\label{ex:stably-compact}
For a $T_0$ space $X$, its frame of opens $\Omega(X)$ is a stably continuous lattice if and only if $X$ is \emph{stably compact}, i.e.\ $T_0$, compact, locally compact, coherent\footnote{A topological space is \emph{coherent} if the intersection of any two compact saturated subsets is compact.} and sober. See e.g.\ \cite[Proposition~VI-7.3]{GierzHofmannKeimelEtAl2003}.
Stably compact spaces generalise both compact Hausdorff spaces and spectral spaces, and are tightly related to compact ordered spaces, cf.\ \cref{s:CompOrd}.
\end{example}

\section{$\K$-sheaves and \texorpdfstring{$\Omega$}{\unichar{"03A9}}-sheaves}\label{s:K-vs-Omega}
Throughout this section, we fix an arbitrary domain $L$. There are order embeddings
\[\begin{tikzcd}
L^\op \arrow[rightarrowtail]{r}{\lambda} & \Filt(L) & \Scott(L) \arrow[rightarrowtail]{l}[swap]{\kappa}
, \end{tikzcd}\]
where $\lambda$ sends $x\in L$ to the principal filter $\u x$, and $\kappa$ is the inclusion of Scott-open filters on $L$ into filters on $L$. Let $\F$ be the union of the images of $\lambda$ and $\kappa$. We regard $\F$ as a sub-poset of $\Filt(L)$ and, with a slight abuse of notation, write again $\lambda\colon L^\op \emb \F$ and $\kappa\colon \Scott(L) \emb \F$ for the obvious co-restrictions. The latter order embeddings will be regarded as functors between small (posetal) categories.

In the first part of this section we only assume that $\cat{C}$ is a \emph{bicomplete} (i.e., complete and cocomplete) category. The functors $\lambda,\kappa$ induce ``restriction'' functors
\begin{center}
\begin{tikzcd} {[\F, \cat{C}]} \arrow{r}{\lambda^*} & {[L^\op, \cat{C}]} \end{tikzcd}
 \ \ and \ \ 
\begin{tikzcd} {[\F, \cat{C}]} \arrow{r}{\kappa^*} & {[\Scott(L), \cat{C}]} \end{tikzcd}
\end{center}
given by precomposing with $\lambda$ and $\kappa$, respectively. As $L^\op$ and $\Scott(L)$ are small categories and $\cat{C}$ is bicomplete, any functor $G$ in $[L^\op, \cat{C}]$ admits a left Kan extension $\Lan_{\lambda} G$ along $\lambda$, and any functor $F$ in $[\Scott(L), \cat{C}]$ admits a right Kan extension $\Ran_{\kappa} F$ along $\kappa$. These Kan extensions are computed pointwise and determine two adjunctions as displayed below (see e.g.\ \cite[\S X.3]{MacLane1998}):
\[\begin{tikzcd}[column sep = 4em]
{[\F, \cat{C}]} \arrow[yshift=4pt]{r}{\lambda^*} \arrow[r,phantom,"\text{\tiny{$\top$}}"] & {[L^\op, \cat{C}]} \arrow[yshift=-4pt]{l}{\Lan_{\lambda}} \\ 
{[\F, \cat{C}]} \arrow[yshift=-4pt]{r}[swap]{\kappa^*} \arrow[r,phantom,"\text{\tiny{$\top$}}"] & {[\Scott(L), \cat{C}].} \arrow[yshift=4pt]{l}[swap]{\Ran_{\kappa}}
\end{tikzcd}\]
Let $\eta,\epsilon$ be the unit and counit, respectively, of the adjunction $\Lan_{\lambda}\dashv \lambda^*$, and $\tilde{\eta},\tilde{\epsilon}$ the unit and counit, respectively, of the adjunction $\kappa^*\dashv \Ran_{\kappa}$.

As $\lambda$ and $\kappa$ are order embeddings (hence, fully faithful), these adjunctions fix all objects of $[L^\op, \cat{C}]$ and $[\Scott(L), \cat{C}]$, respectively. This is the content of the next lemma, which holds more generally for Kan extensions along fully faithful functors, cf.\  \cite[Proposition~4.23]{Kelly1982}.

\begin{lemma}\label{l:fixed-side-Kan}
The following statements hold:
\begin{enumerate}[label=(\alph*)]
\item\label{second-adj} The unit $\eta$ of the adjunction $\Lan_{\lambda}\dashv \lambda^*$ is a natural isomorphism, i.e.\ for all $G\in [L^\op, \cat{C}]$, $\eta_G\colon G\cong \lambda^* \Lan_{\lambda} G$.
\item\label{first-adj} The counit $\tilde{\epsilon}$ of the adjunction $\kappa^*\dashv \Ran_{\kappa}$ is a natural isomorphism, i.e.\ for all $F\in [\Scott(L), \cat{C}]$, $\tilde{\epsilon}_F\colon \kappa^* \Ran_{\kappa}  F\cong F$.
\end{enumerate}
\end{lemma}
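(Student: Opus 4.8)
The plan is to treat both items as instances of the standard fact that the (co)units attached to \emph{pointwise} Kan extensions along a fully faithful functor are invertible, verifying this directly by exploiting that $\lambda$ and $\kappa$ are order embeddings of posets. Both $\lambda\colon L^\op\emb\F$ and $\kappa\colon\Scott(L)\emb\F$ are fully faithful, being order embeddings, and the Kan extensions in question are computed pointwise, as already recorded above; this is essentially all the input required.

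For \cref{second-adj} I would compute $\Lan_\lambda G$ pointwise: for each $f\in\F$, the object $(\Lan_\lambda G)(f)$ is the colimit of $G$ over the comma category $\lambda\downarrow f$, which in the posetal setting is the sub-poset $\{x\in L^\op\mid \lambda(x)\leq f\}$ of $L^\op$. Evaluating at $f=\lambda(y)$ and using that $\lambda$ is an order embedding, this sub-poset equals $\{x\in L^\op\mid x\leq y\}$, whose greatest element $y$ is a terminal object of the indexing category. A colimit of a diagram whose domain has a terminal object is the value at that object, so $(\Lan_\lambda G)(\lambda(y))\cong G(y)$; under this identification the component $\eta_{G,y}$ is the coprojection at the terminal object $y$, i.e.\ an identity. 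Hence $\eta_G$ is a natural isomorphism for every $G\in[L^\op,\cat{C}]$.

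Item \cref{first-adj} is entirely dual. Computing $\Ran_\kappa F$ pointwise, $(\Ran_\kappa F)(f)$ is the limit of $F$ over the comma category $f\downarrow\kappa$, i.e.\ over the sub-poset $\{k\in\Scott(L)\mid f\leq\kappa(k)\}$. Evaluating at $f=\kappa(l)$ gives $\{k\in\Scott(L)\mid l\leq k\}$, whose least element $l$ is an initial object; the limit is therefore $F(l)$, and the component $\tilde\epsilon_{F,l}$ is the projection at $l$, an isomorphism.

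The arguments are routine once set up; the only real care needed is in the bookkeeping of variances---in particular confirming that $\lambda(x)\leq\lambda(y)$ in $\F$ corresponds to $x\leq y$ in $L^\op$ (equivalently $y\leq x$ in $L$, since $\lambda(x)=\u x$), so that the comma categories genuinely acquire the claimed greatest, respectively least, element. Granting this, identifying the (co)unit component with the (co)projection at that extremal object is immediate from the universal property of the (co)limit, and no further computation is needed.
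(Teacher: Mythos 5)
Your proof is correct and is essentially the argument the paper delegates to the literature: the paper proves this lemma only by citing the general fact that (co)units of pointwise Kan extensions along fully faithful functors are invertible (Kelly, Proposition~4.23), and your verification --- identifying the comma categories $\lambda\downarrow\lambda(y)$ and $\kappa(l)\downarrow\kappa$ with sub-posets having a terminal, respectively initial, object, so that the (co)limit collapses to the value there and the (co)unit component becomes the (co)projection at that object --- is exactly the standard proof of that fact specialised to posets. The variance bookkeeping you flag is handled correctly ($\u x\subseteq\u y$ iff $y\leq x$ in $L$ iff $x\leq y$ in $L^\op$), so there is no gap.
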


\begin{remark}\label{rem:Lan-Ran-ff}
Recall that, given any adjunction, the right adjoint is fully faithful precisely when the counit is a natural isomorphism. A dual statement holds for left adjoint functors and units. Therefore, \cref{l:fixed-side-Kan} amounts to saying that the functors $\Ran_{\kappa}$ and $\Lan_{\lambda}$ are fully faithful.
\end{remark}

Composing the two adjunctions above, we obtain the following pair of adjoint functors:
\begin{equation} \label{e:adjunction}
	\begin{tikzcd}[column sep = 4em]
		{[\Scott(L), \cat{C}]} \arrow[yshift=4pt]{r}{\lambda^*\Ran_{\kappa}} \arrow[r,phantom,"\text{\tiny{$\top$}}"] & {[L^\op, \cat{C}].} \arrow[yshift=-4pt]{l}{\kappa^*\Lan_{\lambda}}
	\end{tikzcd}
\end{equation}
In \cref{p:directed-sheaves} below we shall characterise the objects that are fixed by the unit and counit of this adjunction, thus obtaining an equivalence between the full subcategories defined by the fixed objects.
By further restricting this equivalence, in \cref{equiv-Om-K-sheaves} we will relate the notion of $\K$-sheaf to the usual notion of sheaf.

\begin{remark} \label{r:explicitly}
	We provide explicit descriptions of the adjoint functors in~\cref{e:adjunction}. These are easy consequences of the formulas for pointwise Kan extensions, cf.\ \cite[\S X.5]{MacLane1998}.
	For all $G \colon L^\op \to \cat{C}$ and $k \in \Scott(L)$, 
	\[
	(\kappa^*\Lan_{\lambda}G)(k)
	\] is the directed colimit in $\cat{C}$ of the restriction of $G$ to $k$ (regarded as a subset of $L$).
	If $k, l \in \Scott(L)$ are such that $k \subseteq l$, the corresponding arrow 
	\[
	(\kappa^*\Lan_{\lambda}G)(k) \to (\kappa^*\Lan_{\lambda}G)(l)
	\]
	 is the mediating morphism induced by the universal (colimit) property of $(\kappa^*\Lan_{\lambda}G)(k)$.
	
	On the other hand, for all $F \colon \Scott(L) \to \cat{C}$ and $x \in L$, 
	\[
	(\lambda^*\Ran_{\kappa}F)(x)
	\] 
	is the codirected limit in $\cat{C}$ of the restriction of $F$ to $\{k \in \Scott(L) \mid x \in k\}$. Note that the latter set is codirected by \cref{U(s)-codirected} in \cref{l:Scott-filters-prop}.
	Given $x, y \in L$ with $x \leq y$, the corresponding morphism 
	\[
	(\lambda^*\Ran_{\kappa}F)(y) \to (\lambda^*\Ran_{\kappa}F)(x)
	\] 
	is the one induced by the universal (limit) property of $(\lambda^*\Ran_{\kappa}F)(x)$.
\end{remark}

\begin{remark}\label{rem:K-sheaf-encodes-stalks}
Assume that $X$ is a sober space. By the Hofmann--Mislove theorem, we have an isomorphism $\Scott(\Omega(X))\cong\K(X)^\op$. Hence, for any sheaf of sets $G\colon \Omega(X)^\op\to \Set$ over $X$, $\kappa^*\Lan_{\lambda}G$ can be identified with a presheaf $\K(X)^\op\to\Set$. For any point $x\in X$, the up-set $\u{x}$ of $x$ in the specialization order of $X$ is a compact saturated subset of $X$. In view of \cref{r:explicitly}, the value of $\kappa^*\Lan_{\lambda}G$ at $\u{x}$ is
\[
(\kappa^*\Lan_{\lambda}G)(\u{x}) \cong \colim_{\u{x}\subseteq U\in\Omega(X)}{G(U)} \cong \colim_{x\in U\in\Omega(X)}{G(U)},
\]
which is precisely the stalk of $G$ at $x$. A similar remark applies when $\Set$ is replaced with any variety of algebras.
\end{remark}

\begin{lemma}\label{l:K3-Om3}
Consider a functor $F \colon L \to \cat{C}$. The following statements are equivalent:
	\begin{enumerate}
		\item \label{i:dir-colim-1}
		$F$ preserves directed colimits.
		\item \label{i:dir-colim-2}
		For all $x \in L$, the cocone $(\ros{F}{y}{x}\colon F(y) \to F(x))_{y \in \dd x}$ is a (directed) colimit of the restriction of $F$ to $\dd x$.
	\end{enumerate}
\end{lemma}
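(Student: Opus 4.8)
The plan is to prove the two implications separately, the forward one being essentially immediate from the definition of a domain and the reverse one requiring the structural fact \cref{l:downset}.

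For $(1)\Rightarrow(2)$: fix $x\in L$. Since $L$ is a domain, $\dd x$ is directed and $x=\sup \dd x$; thus $\dd x$ is a directed subset of $L$ whose colimit in $L$ (namely its supremum) is $x$. As $F$ preserves directed colimits by hypothesis, the canonical cocone $(\ros{F}{y}{x})_{y\in\dd x}$ is a colimit cocone, which is exactly statement \eqref{i:dir-colim-2}.

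For $(2)\Rightarrow(1)$, let $D\subseteq L$ be a directed subset and put $s\coloneqq \sup D$; I must show that $(\ros{F}{d}{s}\colon F(d)\to F(s))_{d\in D}$ is a colimit of the restriction of $F$ to $D$. The crucial observation is \cref{l:downset}, which gives $\dd s=\dd D=\bigcup_{d\in D}\dd d$. By \eqref{i:dir-colim-2} applied to $x=s$, the cocone $(\ros{F}{y}{s})_{y\in\dd s}$ is a colimit of $F$ restricted to $\dd s$; and by \eqref{i:dir-colim-2} applied to each $d\in D$, $F(d)$ is the colimit of $F$ restricted to $\dd d$. The idea is therefore to reconstruct the colimit over $\dd s$ as a directed colimit, indexed by $D$, of the colimits over the subsets $\dd d$, so that $F(s)=\colim_{y\in\dd s}F(y)=\colim_{d\in D}F(d)$ with the expected transition maps. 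Concretely, I would verify the universal property by hand. Given a compatible cocone $(c_d\colon F(d)\to C)_{d\in D}$, define a cocone $(e_y\colon F(y)\to C)_{y\in\dd s}$ by $e_y\coloneqq c_d\circ\ros{F}{y}{d}$ for any $d\in D$ with $y\in\dd d$; well-definedness follows from directedness of $D$ together with compatibility of $(c_d)$, and compatibility of $(e_y)$ is a short functoriality check. The colimit property of $F(s)$ over $\dd s$ then yields a unique $u\colon F(s)\to C$ with $u\circ\ros{F}{y}{s}=e_y$. To see that $u\circ\ros{F}{d}{s}=c_d$, I would use that $F(d)$ is the colimit over $\dd d$ and check that both morphisms agree after precomposition with each $\ros{F}{y}{d}$, $y\in\dd d$, invoking the uniqueness clause of that universal property; uniqueness of $u$ is an analogous computation using the colimit property over $\dd s$.

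The main obstacle is conceptual rather than computational: there is in general no functor relating the index sets $D$ and $\dd s$ in either direction (an element of $D$ need not be way-below $s$, and an element of $\dd s$ need not lie in $D$), so no naive cofinality argument applies. The essential point that makes the transport of cocones work is precisely the identity $\dd s=\bigcup_{d\in D}\dd d$ from \cref{l:downset}: it lets every index $y\in\dd s$ be captured by some $d\in D$ with $y\in\dd d$, and it is this, combined with the fact that each $F(d)$ is itself a colimit over $\dd d$, that both defines the comparison cocone $(e_y)$ and pins down $u$ on the cocone $(\ros{F}{d}{s})_{d\in D}$. Care is only needed to confirm that the sets $\dd d$ and $\dd s$ are non-empty---which holds since directed sets are non-empty by convention---so that the relevant uniqueness clauses genuinely determine the mediating morphisms.
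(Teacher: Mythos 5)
Your proof is correct and takes essentially the same route as the paper's: the forward implication is immediate from continuity of $L$, and for the converse you transport a compatible cocone indexed by $D$ to one indexed by $\dd \sup D$ using \cref{l:downset}, obtain the mediating morphism from the colimit property over $\dd \sup D$, and verify it restricts correctly on each $F(d)$ via the colimit property over $\dd d$ --- exactly the paper's argument with $e_y$ and $u$ playing the roles of the paper's $\psi_z$ and $\eta$. No changes needed.
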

\begin{proof}
	Because $L$ is continuous, for all $x\in L$ the set $\dd x$ is directed and $x = \sup{\dd x}$.
	Hence, \ref{i:dir-colim-2} is an immediate consequence of \ref{i:dir-colim-1}.
	
	Conversely, suppose that \ref{i:dir-colim-2} holds and fix an arbitrary directed set $\D\subseteq L$. We must show that the cocone 
	\[
	(\ros{F}{y}{\sup \D}\colon F(y) \to F\big(\sup \D\big))_{y \in \D}
	\] 
	is a colimit of the restriction of $F$ to~$\D$. Let $(\phi_y\colon F(y)\to A)_{y\in \D}$ be a compatible cocone over the restriction of $F$ to~$\D$.
	For each $z \in \dd \D$, let $\psi_z \colon F(z) \to A$ denote the composite $\phi_{y_z}\circ \ros{F}{z}{y_z}$, where $y_z$ is any element of $\D$ such that $z \ll y_z$: since $\D$ is directed and the cocone $(\phi_y \colon F(y) \to A)_{y \in \D}$ is compatible, $\psi_z$ does not depend on the choice of $y_z$.
	It is not difficult to see that $(\psi_z \colon F(z) \to A)_{z \in \dd\D}$ is a compatible cocone over the restriction of $F$ to $\dd\D$. As $\dd \D = \dd \sup{\D}$ by \cref{l:downset}, it follows from \ref{i:dir-colim-2} that there is a unique arrow $\eta \colon F(\sup\D) \to A$ such that $\psi_z = \eta\circ \ros{F}{z}{\sup{\D}}$ for all $z \in \dd\D$.
	
	We claim that $\eta$ is the unique arrow such that $\phi_y = \eta\circ \ros{F}{y}{\sup{\D}}$ for all $y\in \D$, thus showing that $(\ros{F}{y}{\sup \D}\colon F(y) \to F(\sup \D))_{y \in \D}$ is a colimit cocone.
	To this end, fix an arbitrary $y\in\D$.
	Applying \ref{i:dir-colim-2} again, we have that the cocone $(\ros{F}{z}{y}\colon F(z) \to F(y))_{z \in \dd y}$ is a colimit of the restriction of $F$ to $\dd y$.
	Using the universal property of the latter colimit, we see that $\phi_y$ must coincide with the composite 
	\[\begin{tikzcd}[column sep = 3em]
		F(y) \arrow{r}{\ros{F}{y}{\sup{\D}}} & F(\sup{\D}) \arrow{r}{\eta} & A.
	\end{tikzcd}\]
	Just observe that, for all $z\in \dd y$,
	\[
	\phi_y \circ \ros{F}{z}{y} = \phi_{y_z}\circ \ros{F}{z}{y_z} = \psi_z = \eta\circ \ros{F}{z}{ \sup{\D}}.
	\]
	Clearly, $\eta$ is unique with respect to this property.
\end{proof}

It is useful to record the dual version of the previous lemma:
\begin{lemma} \label{l:Om3-K3}
Consider a functor $F \colon L^\op \to \cat{C}$. The following statements are equivalent:
	\begin{enumerate}
		\item
		$F$ preserves codirected limits.
		\item
		For all $x \in L$, the cone $(\ros{F}{x}{y}\colon F(x) \to F(y))_{y \in \dd x}$ is a (codirected) limit of the restriction of $F$ to $\dd x$.
	\end{enumerate}
\end{lemma}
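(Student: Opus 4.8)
The statement is the formal dual of \cref{l:K3-Om3}, so my plan is to deduce it directly from that lemma rather than re-running the whole colimit argument in reverse. The one point that genuinely requires care is \emph{which} side to dualise: it is tempting to replace the domain $L$ by $L^\op$ and invoke \cref{l:K3-Om3} there, but $L^\op$ need not be a domain, so this approach fails. Instead I would dualise the \emph{target} category, keeping $L$ fixed.

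Concretely, I would set $G \coloneqq F^\op \colon L \to \cat{C}^\op$, the functor obtained from the presheaf $F \colon L^\op \to \cat{C}$ by reversing arrows in both source and target (using $(L^\op)^\op = L$). Since the domain is left untouched, the way-below relation $\ll$ and the sets $\dd x$ appearing in both statements are literally the same, so the hypothesis that $L$ is a domain transfers verbatim; moreover $\cat{C}^\op$ is bicomplete whenever $\cat{C}$ is. Hence \cref{l:K3-Om3} applies to $G$.

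The remaining work is purely a matter of translating the two conditions of \cref{l:K3-Om3} across the dictionary ``a colimit in $\cat{C}^\op$ is a limit in $\cat{C}$''. For the first condition, $G$ preserves directed colimits precisely when $F$ preserves codirected limits: a directed $\D\subseteq L$ is the same thing as a codirected subset of $L^\op$, its supremum in $L$ is its infimum in $L^\op$, and a colimit of $G$ over $\D$ in $\cat{C}^\op$ is by definition a limit of $F$ over $\D$ in $\cat{C}$. For the second condition, the arrow $G(y)\to G(x)$ in $\cat{C}^\op$ induced by $y\le x$ is exactly the arrow $\ros{F}{x}{y}\colon F(x)\to F(y)$ in $\cat{C}$, so the colimit cocone $(\ros{G}{y}{x})_{y\in\dd x}$ in $\cat{C}^\op$ corresponds precisely to the cone $(\ros{F}{x}{y}\colon F(x)\to F(y))_{y\in\dd x}$ in $\cat{C}$, and being a colimit in $\cat{C}^\op$ translates into being a (codirected) limit in $\cat{C}$.

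Reading both equivalences of \cref{l:K3-Om3} for $G$ through this translation then yields exactly the claimed equivalence for $F$. I do not expect any real obstacle here; the only thing to get right is the bookkeeping of arrow directions under the two simultaneous dualisations, together with the observation that the domain $L$ must \emph{not} be dualised.
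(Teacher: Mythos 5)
Your proposal is correct and is essentially identical to the paper's own proof, which likewise deduces the statement by applying \cref{l:K3-Om3} to the functor $F^\op \colon L \to \cat{C}^\op$, dualising the target category while leaving the domain $L$ (and hence the way-below relation and the sets $\dd x$) untouched. The additional bookkeeping you supply about translating colimits in $\cat{C}^\op$ into limits in $\cat{C}$ is accurate and merely spells out what the paper leaves implicit.
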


\begin{proof}
	This follows by applying \cref{l:K3-Om3} to the functor $F^\op \colon L \to \cat{C}^\op$.
\end{proof}

Now, denote by 
\[
\omega\text{-}\lim[L^\op, \cat{C}]
\] 
the full subcategory of $[L^\op, \cat{C}]$ defined by the functors preserving codirected limits, and~by 
\[
\omega\text{-}\colim[\Scott(L), \cat{C}]
\] 
the full subcategory of $[\Scott(L), \cat{C}]$ defined by the functors preserving directed colimits.

\begin{lemma} \label{l:composition}
	The following statements hold:
	\begin{enumerate}[label=(\alph*)]
		\item \label{i:kappa-Lan}
		For all $G \in [L^\op, \cat{C}]$, the functor $\kappa^*\Lan_{\lambda}G$ belongs to $\omega\text{-}\colim[\Scott(L), \cat{C}]$.
		\item \label{i:lambda-Ran}
		For all $F \in [\Scott(L), \cat{C}]$, the functor $\lambda^*\Ran_{\kappa}F$ belongs to $\omega\text{-}\lim[L^\op, \cat{C}]$.
	\end{enumerate}
\end{lemma}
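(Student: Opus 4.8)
The plan is to prove \ref{i:kappa-Lan} in detail and to obtain \ref{i:lambda-Ran} by the dual argument. In each case I would reduce the claim, via \cref{l:K3-Om3} and \cref{l:Om3-K3}, to a statement about way-below (co)cones at a single object, and then settle that statement by an interchange of directed (co)limits, the essential input being a ``directed union'' identity among Scott-open filters.

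For \ref{i:kappa-Lan}, set $H \coloneqq \kappa^*\Lan_{\lambda}G$. Since $\Scott(L)$ is again a domain, \cref{l:K3-Om3} applies with $\Scott(L)$ in place of $L$, so it suffices to show that for every $k\in\Scott(L)$ the canonical cocone $(H(l)\to H(k))_{l\ll k}$ is a colimit of the restriction of $H$ to $\dd k$. By \cref{r:explicitly}, $H(k)$ is the directed colimit $\colim_{x\in k}G(x)$ and $H(l)=\colim_{x\in l}G(x)$, with the structure maps $H(l)\to H(k)$ (for $l\subseteq k$) induced by the inclusions of index sets. The geometric input is that $k=\bigcup_{l\ll k} l$: indeed $\dd k$ is directed and $k=\sup\dd k$ because $\Scott(L)$ is continuous, while directed suprema in $\Scott(L)$ are unions. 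Thus the diagram $x\mapsto G(x)$ over $k$ is the directed union of its restrictions to the filters $l\ll k$, and the required statement is precisely the interchange $\colim_{x\in k}G(x)\cong\colim_{l\ll k}\,\colim_{x\in l}G(x)$.

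To verify this interchange I would check the universal property directly. Writing $\iota^{l}_{x}\colon G(x)\to H(l)$ for the colimit injections, a cocone $(\sigma_l\colon H(l)\to Z)_{l\ll k}$ determines a cocone on the restriction of $G$ to $k$ by setting $\tau_x\coloneqq\sigma_l\circ\iota^{l}_{x}$ for any $l\ll k$ with $x\in l$ (such an $l$ exists since $k=\bigcup_{l\ll k}l$). Independence of the choice of $l$, and the cocone identities for the family $(\tau_x)$, both follow from directedness of $\dd k$: any two witnesses $l,l'$ are dominated by a common $l''\ll k$, over which the relevant triangles commute. The resulting factorisation $H(k)\to Z$ is then easily seen to be the unique morphism of cocones, which shows that the way-below cocone is a colimit; \cref{l:K3-Om3} then yields $H\in\omega\text{-}\colim[\Scott(L),\cat{C}]$.

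Part \ref{i:lambda-Ran} is entirely dual. For $H'\coloneqq\lambda^*\Ran_{\kappa}F$, \cref{l:Om3-K3} reduces the claim to showing that for each $x\in L$ the cone $(H'(x)\to H'(y))_{y\ll x}$ is a limit of the restriction of $H'$ to $\dd x$. Here \cref{r:explicitly} gives $H'(x)=\lim_{k\in\mathcal{U}(x)}F(k)$, where $\mathcal{U}(x)=\{k\in\Scott(L)\mid x\in k\}$, and the analogous identity is $\mathcal{U}(x)=\bigcup_{y\ll x}\mathcal{U}(y)$: the inclusion $\supseteq$ holds because each $k$ is upward closed, $\subseteq$ is \cref{Scott-round-filter} in \cref{l:Scott-filters-prop}, and the family $\{\mathcal{U}(y)\}_{y\ll x}$ is directed because $\dd x$ is. Dualising the argument above identifies $H'(x)=\lim_{k\in\mathcal{U}(x)}F(k)$ with $\lim_{y\ll x}\lim_{k\in\mathcal{U}(y)}F(k)=\lim_{y\ll x}H'(y)$ via the canonical cone, so $H'\in\omega\text{-}\lim[L^\op,\cat{C}]$. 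The main obstacle throughout is exactly this interchange of directed (co)limits, i.e.\ establishing the universal property in the third paragraph; its only real content is the well-definedness of $(\tau_x)$ (and dually of the induced cone), which rests squarely on the directedness of $\dd k$ and $\dd x$. The two union identities, by contrast, are immediate from the domain structure of $\Scott(L)$ and from \cref{l:Scott-filters-prop}.
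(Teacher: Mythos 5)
Your proposal is correct and follows essentially the same route as the paper: reduce via \cref{l:K3-Om3} (resp.\ \cref{l:Om3-K3}) to the way-below (co)cone at a single object, use $k=\bigcup\dd k$ (resp.\ the identity $\{k\mid x\in k\}=\bigcup_{y\ll x}\{k\mid y\in k\}$ from \cref{Scott-round-filter} in \cref{l:Scott-filters-prop}), and then verify the universal property of the interchange of directed (co)limits by hand, exactly as in the paper's detailed argument. The only cosmetic difference is that you work directly with the colimit injections into $\colim_{x\in l}G(x)$, whereas the paper first notes that $\kappa^*$ preserves colimits, argues at the level of $\Lan_{\lambda}G$ on $\F$, and invokes \cref{Scott-way-below} of \cref{l:Scott-filters-prop} in the verification step.
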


\begin{proof}
For \cref{i:kappa-Lan}, fix an arbitrary $k\in \Scott(L)$.	
	In view of \cref{l:K3-Om3}, it suffices to prove that the cocone $(\ros{G}{l}{k}\colon(\kappa^* \Lan_\lambda G)(l) \to (\kappa^* \Lan_\lambda G)(k))_{l \in \dd k}$ is a colimit of the restriction of $\kappa^*\Lan_\lambda G $ to $\dd k$.
	Note that, since $\Scott(L)$ is continuous, we have $k = \bigcup{\dd k}$.
	Therefore, using \cref{r:explicitly} twice, we have
	\[
		(\kappa^*\Lan_\lambda G)(k) \cong \colim_{x \in k} G(x) \cong \colim_{l \ll k,\, x \in l} G(x) \cong \colim_{l \ll k}\colim_{x \in l}G(x) \cong \colim_{l \ll k} (\kappa^* \Lan_\lambda G)(l).
	\]
This slick proof does not show that the cocone above is a colimit cocone, although this could be deduced by checking how the colimit cocones are modified under the chain of isomorphisms. We now give a detailed proof that follows precisely this intuition.

To start with, observe that $\kappa^*$ is left adjoint and so it preserves colimits. Thus, it is enough to show that the cocone 
\begin{equation}\label{eq:claim-colimit-Lan-G}
(\ros{G}{l}{k}\colon \Lan_{\lambda}G(l) \to \Lan_{\lambda}G(k))_{l \in \dd k}
\end{equation}
is a colimit of the restriction of $\Lan_{\lambda} G$ to $\dd k$.
Let $(\phi_l\colon \Lan_{\lambda} G(l) \to A)_{l\in \dd k}$ be a compatible cocone over the restriction of $\Lan_{\lambda} G$ to $\dd k$. 
Because $k = \bigcup{\dd k}$, for all $x\in k$ there is $l\in \Scott(L)$ such that $l\ll k$ and $x\in l$.
Hence, for each $x\in k$, we obtain an arrow 
\[
\phi_x\colon \Lan_{\lambda} G(\u x)\to A
\] 
by composing $\phi_l\colon \Lan_{\lambda} G(l)\to A$ with $\ros{G}{\u x}{l}\colon \Lan_{\lambda} G(\u x)\to \Lan_{\lambda} G(l)$. Note that the definition of $\phi_x$ does not depend on the choice of $l$ because $\dd k$ is directed, and 
the family $\{\phi_x\colon \Lan_{\lambda} G(\u x)\to A\mid x\in k\}$ forms a compatible cocone over the restriction of $\Lan_{\lambda} G$ to~$k$. The colimit of the latter diagram is $\Lan_{\lambda} G(k)$, so there is a unique mediating morphism $\mu\colon \Lan_{\lambda} G(k)\to A$ such that $\phi_x= \mu\circ \ros{G}{\u x}{k}$ for all $x\in k$. We claim that $\phi_l = \mu\circ \ros{G}{l}{k}$ for all $l\ll k$. Fix an arbitrary Scott-open filter $l\in\dd k$. By \cref{Scott-way-below} in \cref{l:Scott-filters-prop}, there is $x\in k$ such that $l\subseteq \u x$ and thus $\ros{G}{l}{k}$ factors through $\ros{G}{\u x}{k}$. We get
\[
\mu\circ \ros{G}{l}{k}= \mu\circ \ros{G}{\u x}{k}\circ \ros{G}{l}{\u x}= \phi_x\circ \ros{G}{l}{\u x},
\]
which in turn coincides with $\phi_l$ by construction. Further, it is not difficult to see that $\mu$ is the unique morphism satisfying $\phi_l = \mu\circ \ros{G}{l}{k}$ for all $l\ll k$. Hence, the cocone in \cref{eq:claim-colimit-Lan-G} is a colimit of the restriction of $\Lan_{\lambda} G$ to $\dd k$.

Now, for \cref{i:lambda-Ran}, fix an arbitrary $x \in L$. In view of \cref{l:Om3-K3}, it suffices to prove that the cone $(\ros{F}{x}{y}\colon (\lambda^*\Ran_{\kappa} F)(x) \to (\lambda^*\Ran_{\kappa} F)(y))_{y \in \dd x}$ is a limit of the restriction of $\lambda^*\Ran_{\kappa} F$ to $\dd x$.
	By \cref{Scott-round-filter} in \cref{l:Scott-filters-prop}, we have
	\[
		\{l \in \Scott(L) \mid x \in l\} = \{ l \in \Scott(L) \mid \exists y \in L \text{ such that } y \ll x \text{ and } y \in l \}.
	\]
	Using \cref{r:explicitly} twice, we get
	\[
		(\lambda^*\Ran_{\kappa} F)(x) \cong \lim_{x \in l}F(l) = \lim_{y \ll x} \lim_{y \in l} F(l) = \lim_{y \ll x} (\lambda^*\Ran_{\kappa} F)(y),
	\]
	where $l\in\Scott(L)$. 
	Reasoning along the same lines as before, it is possible to give a direct proof that the cone above is a limit cone; we leave the details to the reader.
\end{proof}

As a consequence of \cref{l:composition}, every object of $[L^\op, \cat{C}]$ that is fixed by the unit of the adjunction in~\cref{e:adjunction} belongs to $\omega\text{-}\lim[L^\op, \cat{C}]$, and every object of $[\Scott(L), \cat{C}]$ that is fixed by the counit belongs to $\omega\text{-}\colim[\Scott(L), \cat{C}]$.
The next lemma will imply that the converse implications hold as well. 

Recall that $\tilde{\eta}$ is the unit of the adjunction $\kappa^*\dashv \Ran_{\kappa}$ and $\epsilon$ is the counit of ${\Lan_{\lambda}\dashv \lambda^*}$. 
Intuitively, for all functors $F\colon \mathcal{F}\to\cat{C}$, $\tilde{\eta}_F$ is an isomorphism precisely when, if we first restrict $F$ along $\kappa^*$ and then extend along $\Ran_{\kappa}$, we recover $F$ up to a natural isomorphism. A similar remark applies to the case where $\epsilon_F$ is an isomorphism.

\begin{lemma} \label{l:directed-sheaves-F}
	The following conditions are equivalent for all functors $F\in [\mathcal{F}, \cat{C}]$:
	\begin{enumerate}
		\item \label{i:K-like} 
		$\kappa^* F\in \omega\text{-}\colim[\Scott(L), \cat{C}]$ and $\tilde{\eta}_F$ is an isomorphism.
		\item \label{i:Om-like}
		$\lambda^* F\in \omega\text{-}\lim[L^\op, \cat{C}]$ and $\epsilon_F$ is an isomorphism.	
	\end{enumerate}
\end{lemma}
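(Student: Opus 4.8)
The plan is to treat the two implications separately. After stripping away the formal adjunction bookkeeping, each reduces to a single \emph{interchange} isomorphism — between a codirected limit and an iterated limit-of-colimits in one direction, and between a directed colimit and a colimit-of-limits in the other — and it is precisely there that the structure of the domain $L$ enters.

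First I would record two reductions coming from the triangle identities. By the identity $\tilde\epsilon_{\kappa^*F}\circ\kappa^*(\tilde\eta_F)=\mathrm{id}_{\kappa^*F}$ and \cref{first-adj} in \cref{l:fixed-side-Kan} (which says $\tilde\epsilon$ is an isomorphism), the component $\kappa^*(\tilde\eta_F)$ is \emph{always} an isomorphism; hence $\tilde\eta_F$ is an isomorphism if and only if its component at every principal filter $\u x$ is one. Dually, the identity $\lambda^*(\epsilon_F)\circ\eta_{\lambda^*F}=\mathrm{id}_{\lambda^*F}$ together with \cref{second-adj} in \cref{l:fixed-side-Kan} shows $\lambda^*(\epsilon_F)$ is always an isomorphism, so $\epsilon_F$ is an isomorphism if and only if its component at every Scott-open filter $k$ is. Unwinding the pointwise Kan extension formulas of \cref{r:explicitly}, the component of $\tilde\eta_F$ at $\u x$ is the canonical comparison $F(\u x)\to\lim_{x\in k}F(k)$ (the limit over the codirected set $\{k\in\Scott(L)\mid x\in k\}$, codirected by \cref{U(s)-codirected} in \cref{l:Scott-filters-prop}), and the component of $\epsilon_F$ at $k$ is the canonical comparison $\colim_{y\in k}F(\u y)\to F(k)$ (the colimit over the directed set $k\subseteq L$).

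For the implication \cref{i:Om-like}$\,\Rightarrow\,$\cref{i:K-like}, assume $\lambda^*F$ preserves codirected limits and $\epsilon_F$ is an isomorphism, so that $F\cong\Lan_\lambda G$ with $G\coloneqq\lambda^*F\in\omega\text{-}\lim[L^\op,\cat{C}]$. Then $\kappa^*F\cong\kappa^*\Lan_\lambda G$ lies in $\omega\text{-}\colim[\Scott(L),\cat{C}]$ by \cref{i:kappa-Lan} in \cref{l:composition}, which is the first half of \cref{i:K-like}. For the second half, the first reduction shows it suffices to prove that for every $x\in L$ the canonical map
\[
G(x)\;\longrightarrow\;\lim_{x\in k}\,\colim_{y\in k}G(y)
\]
is an isomorphism. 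Since $G$ preserves codirected limits, \cref{l:Om3-K3} gives $G(x)\cong\lim_{y\ll x}G(y)$, a limit over the codirected set $\dd x$. The crux is to reconcile this codirected limit with the iterated $\lim_{x\in k}\colim_{y\in k}$: I would build mutually inverse morphisms between the two objects, using roundness (\cref{Scott-round-filter} in \cref{l:Scott-filters-prop}) to choose, for each $k\ni x$, an element $y\ll x$ with $y\in k$, and using \cref{L-way-below-Scott,Scott-way-below} to choose, for each $y\ll x$, a Scott-open filter $k(y)$ with $x\in k(y)\subseteq\u y$; the requisite compatibilities then follow from the codirectedness of $\{k\mid x\in k\}$ (\cref{U(s)-codirected}). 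This yields that $\tilde\eta_F$ is an isomorphism at every $\u x$, hence everywhere.

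The reverse implication \cref{i:K-like}$\,\Rightarrow\,$\cref{i:Om-like} is formally dual: writing $H\coloneqq\kappa^*F\in\omega\text{-}\colim[\Scott(L),\cat{C}]$, the hypothesis that $\tilde\eta_F$ is an isomorphism gives $F\cong\Ran_\kappa H$, whence $\lambda^*F\in\omega\text{-}\lim[L^\op,\cat{C}]$ by \cref{i:lambda-Ran} in \cref{l:composition}, and by the second reduction it remains to prove the dual interchange $\colim_{y\in k}\lim_{y\in l}H(l)\cong H(k)$ (using $H(k)\cong\colim_{l\ll k}H(l)$ from \cref{l:K3-Om3}). I would obtain this either by repeating the argument of the previous paragraph with limits and colimits interchanged, or, more economically, by applying the already-proved implication in $\cat{C}^\op$ and invoking the Lawson self-duality $\Scott(\Scott(L))\cong L$ of \cref{r:U(s)-open-filter}, under which $\lambda$ and $\kappa$, the two Kan extensions, the pair $(\eta,\epsilon)$ and $(\tilde\eta,\tilde\epsilon)$, and the two conditions \cref{i:K-like} and \cref{i:Om-like} are exchanged. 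The main obstacle throughout is the interchange isomorphism of the third paragraph: everything else is a routine unwinding of the two adjunctions and the pointwise Kan extension formulas, whereas matching a codirected limit indexed by $\dd x$ against a construction indexed by Scott-open filters relies essentially on the interpolation and roundness properties collected in \cref{l:Scott-filters-prop}.
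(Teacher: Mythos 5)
Your proposal is correct and follows essentially the same route as the paper's proof: both reduce to checking the relevant unit/counit components only at principal (resp.\ Scott-open) filters, obtain the two preservation statements from \cref{l:composition}, and then verify the remaining comparison morphism is invertible by matching the (co)limit over $\dd x$ (resp.\ $\dd k$) against the one indexed by $\{k \mid x \in k\}$ (resp.\ $\{\u y \mid y \in k\}$) via exactly the roundness and interpolation properties of \cref{l:Scott-filters-prop}. Your triangle-identity reduction and the optional Lawson-duality shortcut for the second implication are pleasant repackagings, but the substantive step is carried out the same way as in the paper.
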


\begin{proof}
	Let us prove that \ref{i:K-like} implies \ref{i:Om-like}. Fix an arbitrary functor $F\colon \mathcal{F}\to \cat{C}$ such that $\kappa^* F$ preserves directed colimits and $\tilde{\eta}_F$ is an isomorphism. 
	By \cref{i:lambda-Ran} in \cref{l:composition} applied to $\kappa^* F$, we have that $\lambda^* \Ran_{\kappa} \kappa^* F$ preserves codirected limits. In view of the isomorphism $\lambda^*(\tilde{\eta}_F)\colon \lambda^* F\cong \lambda^* \Ran_{\kappa}\kappa^* F$, we get $\lambda^* F\in \omega\text{-}\lim[L^\op, \cat{C}]$.
	
	Next, we show that $\epsilon_F$ is an isomorphism. 
	That is, for all $\phi\in\F$, 
	\[
	(\epsilon_F)_{\phi}\colon (\Lan_{\lambda}\lambda^* F)(\phi) \to F(\phi)
	\]
	is an isomorphism.
	By the formula for pointwise left Kan extensions (cf.\ \cite[\S X.5]{MacLane1998}), $(\epsilon_F)_{\phi}$ can be identified with the unique mediating morphism
	\[
	\colim_{\u x \subseteq \phi}F(\u x) \to F(\phi)
	\]
	for $x\in L$. If $\phi$ belongs to the image of $\lambda$ (i.e., $\phi$ is a principal filter) then this mediating morphism is clearly an isomorphism. Hence it remains to prove that, for all $k\in\Scott(L)$, the following arrow is an isomorphism:
	\[
	\colim_{x\in k} F(\u x) \to F(k).
	\]
	Let $\{\psi_x\colon F(\u x)\to A\mid x\in k\}$ be a compatible cocone in $\cat{C}$ over the restriction of $F$ to $\{\u x \mid x\in k\}$. By \cref{Scott-way-below} in \cref{l:Scott-filters-prop}, whenever two Scott-open filters $k$ and $k'$ satisfy $k'\ll k$, there is $x\in k$ such that $k'\subseteq \u x$. Hence, for every $k' \in \dd k$ we obtain a morphism 
	\[
	\phi_{k'}\colon F(k')\to A
	\] 
	by composing $\ros{F}{k'}{\u x}\colon F(k')\to F(\u x)$ with $\psi_x\colon F(\u x)\to A$. Note that the definition of $\phi_{k'}$ does not depend on the choice of $x$ because $k$ is codirected, and $\{\phi_{k'}\colon F(k')\to A\mid k'\ll k\}$ is a compatible cocone over the restriction of $\kappa^* F$ to $\dd k$ (recall that, for all $l\in \Scott(L)$, $\kappa^* F(l)=F(l)$). 
	As $\kappa^* F$ preserves directed colimits, we have
	\[
	\kappa^* F(k)\cong \colim_{k'\ll k} \kappa^*F(k').
	\]
	More precisely, the cocone $(\ros{F}{k'}{k}\colon \kappa^* F(k') \to \kappa^* F(k))_{k' \in \dd k}$ is a colimit of the restriction of $\kappa^* F$ to $\dd k$. 
	Thus, there is a unique mediating morphism $\mu\colon F(k)\to A$ satisfying $\phi_{k'}=\mu\circ \ros{F}{k'}{k}$ for all $k'\ll k$. It remains to show that $\mu$ satisfies $\psi_x=\mu\circ \ros{F}{\u x}{k}$ for all $x\in k$ and is unique with this property. 
	Fix an arbitrary $x\in k$. By \cref{Scott-round-filter-cons} in \cref{l:Scott-filters-prop} there is $k'\ll k$ such that $x\in k'$. It follows that $\ros{F}{\u x}{k}=\ros{F}{k'}{k}\circ \ros{F}{\u x}{k'}$ and so
	\[
	\mu\circ \ros{F}{\u x}{k} = \mu \circ \ros{F}{k'}{k}\circ \ros{F}{\u x}{k'} = \phi_{k'} \circ \ros{F}{\u x}{k'},
	\]
	which coincides with $\psi_x$. To deduce that $\mu$ is unique with this property, it is enough to note that any $\nu$ satisfying $\psi_x=\nu\circ \ros{F}{\u x}{k}$ for all $x\in k$ must satisfy $\phi_{k'}=\nu\circ \ros{F}{k'}{k}$ for all $k'\ll k$. Just observe that, as mentioned above, $k'\ll k$ entails the existence of $x\in k$ such that $k'\subseteq \u x$. Thus,
	\[
	\nu\circ \ros{F}{k'}{k} = \nu \circ \ros{F}{\u x}{ k} \circ \ros{F}{k'}{ \u x} = \psi_x \circ \ros{F}{k'}{ \u x} = \phi_{k'}.
	\]
	
	To show that \ref{i:Om-like} implies \ref{i:K-like}, suppose that $F\colon \mathcal{F}\to\cat{C}$ is such that $\lambda^* F$ preserves codirected limits and $\epsilon_F$ is an isomorphism. \Cref{i:kappa-Lan} in \cref{l:composition} applied to $\lambda^* F$ shows that $\kappa^*\Lan_{\lambda}\lambda^* F$ preserves directed colimits. Because $\kappa^*\Lan_{\lambda}\lambda^* F$ is naturally isomorphic to $\kappa^* F$ via $\kappa^*(\epsilon_F)$, we see that $\kappa^* F\in \omega\text{-}\colim[\Scott(L), \cat{C}]$.
	
	It remains to prove that $\tilde{\eta}_F$ is an isomorphism. That is, for all $\phi\in\F$, 
	\[
	(\tilde{\eta}_F)_{\phi}\colon F(\phi) \to (\Ran_{\kappa}\kappa^* F)(\phi)
	\]
	is an isomorphism.
	In view of the formula for pointwise right Kan extensions, $(\tilde{\eta}_F)_{\phi}$ can be identified with the unique mediating morphism
	\[
	F(\phi) \to \lim_{\phi\subseteq k}{F(k)}
	\]
	for $k$ which ranges over $\Scott(L)$. If $\phi$ belongs to the image of $\kappa$ (i.e., $\phi$ is a Scott-open filter) then this mediating morphism is clearly an isomorphism. Hence it suffices to prove that, for all $x\in L$, the following arrow is an isomorphism:
	\[
	F(\u x) \to \lim_{x\in k}{F(k)}.
	\]
	Let $\{\psi_k\colon A\to F(k)\mid x\in k\}$ be a compatible cone in $\cat{C}$ over the restriction of $F$ to $\{k\in\Scott(L) \mid x\in k\}$. Fix an arbitrary $y\ll x$. By \cref{L-way-below-Scott} in \cref{l:Scott-filters-prop}, there is a Scott-open filter $k$ such that $x\in k\subseteq \u y$. Hence we can define an arrow 
	\[
	\phi_y\colon A \to F(\u y)
	\] 
	as the composition of $\psi_k\colon A\to F(k)$ and the restriction map $\ros{F}{k}{\u y} \colon F(k)\to F(\u y)$.
	Note that the definition of $\phi_y$ does not depend on the choice of $k$ because $\dd x$ is directed, and the family $\{\phi_y \colon A \to F(\u y) \mid y\ll x\}$ forms a compatible cone over the restriction of $F$ to $\{\u y \mid y \in \dd x\}$. As $\lambda^* F$ preserves codirected limits, we have $\lambda^* F(x) \cong \lim_{y\ll x}{\lambda^* F(y)}$ and so $F(\u x) \cong \lim_{y\ll x}{F(\u y)}$. Thus the compatible cone above induces a unique arrow $\mu\colon A \to F(\u x)$ such that $\phi_y = \ros{F}{\u x}{ \u y} \circ \mu$ for all $y\ll x$. It remains to prove that $\mu$ satisfies $\psi_k = \ros{F}{\u x}{ k} \circ \mu$ for all Scott-open filters $k$ containing $x$ and is unique with this property. Fix an arbitrary $k\in\Scott(L)$ such that $x\in k$. By \cref{Scott-round-filter} in \cref{l:Scott-filters-prop} there is $y\ll x$ such that $y\in k$. In particular, $\u x \subseteq \u y\subseteq k$ and therefore
	\[
	\ros{F}{\u x}{ k} \circ \mu = \ros{F}{\u y}{ k} \circ \ros{F}{\u x}{ \u y} \circ \mu = \ros{F}{\u y}{ k} \circ \phi_y,
	\]
	which coincides with $\psi_k$. To see that $\mu$ is unique with this property, it suffices to note that any $\nu$ satisfying $\psi_k = \ros{F}{\u x}{ k} \circ \nu$ for all $k\in\Scott(L)$ such that $x\in k$ must satisfy $\phi_y = \ros{F}{\u x}{ \u y} \circ \nu$ for all $y\ll x$. Just recall that, as pointed out above, whenever $y\ll x$ there is $k\in\Scott(L)$ satisfying $\u x \subseteq k\subseteq \u y$. Hence,
	\[
	\ros{F}{\u x}{ \u y} \circ \nu = \ros{F}{k}{ \u y} \circ \ros{F}{\u x}{ k} \circ \nu = \ros{F}{k}{ \u y} \circ \psi_k = \phi_y. \qedhere
	\]
\end{proof}

Recall that $L$ is an arbitrary domain and $\cat{C}$ is a bicomplete category. The next proposition provides a description of the objects that are fixed by the adjunction in \cref{e:adjunction}.

\begin{proposition} \label{p:directed-sheaves}
	The adjoint pair of functors
	\[
		\begin{tikzcd}[column sep = 4em]
			{[\Scott(L), \cat{C}]} \arrow[yshift=4pt]{r}{\lambda^*\Ran_{\kappa}} \arrow[r,phantom,"\text{\tiny{$\top$}}"] & {[L^\op, \cat{C}]} \arrow[yshift=-4pt]{l}{\kappa^*\Lan_{\lambda}}
		\end{tikzcd}
	\]
	restricts to an equivalence $\omega\text{-}\colim[\Scott(L), \cat{C}] \simeq \omega\text{-}\lim[L^\op, \cat{C}]$.
\end{proposition}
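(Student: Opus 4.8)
The plan is to exhibit a single ``bridge'' category inside $[\F,\cat{C}]$ through which both restriction functors factor as equivalences, and whose composite recovers the adjoint pair of \cref{e:adjunction}. Concretely, let $\mathcal{D}$ denote the full subcategory of $[\F,\cat{C}]$ consisting of the functors satisfying the (equivalent) conditions of \cref{l:directed-sheaves-F}. I would show that $\lambda^*$ restricts to an equivalence $\mathcal{D}\simeq\omega\text{-}\lim[L^\op,\cat{C}]$ and that $\kappa^*$ restricts to an equivalence $\mathcal{D}\simeq\omega\text{-}\colim[\Scott(L),\cat{C}]$; transporting along the quasi-inverse of the first and then applying the second then yields the claimed equivalence, realised by $\kappa^*\Lan_{\lambda}$ and $\lambda^*\Ran_{\kappa}$.

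For the first equivalence, recall that $\Lan_{\lambda}$ is fully faithful (\cref{rem:Lan-Ran-ff}), equivalently that the unit $\eta$ is a natural isomorphism (\cref{l:fixed-side-Kan}). Hence the adjunction $\Lan_{\lambda}\dashv\lambda^*$ corestricts to an adjoint equivalence between $[L^\op,\cat{C}]$ and the essential image $\mathcal{E}\coloneqq\{F\in[\F,\cat{C}]\mid \epsilon_F \text{ is an isomorphism}\}$ of $\Lan_{\lambda}$, with $\lambda^*$ as quasi-inverse. Since condition~(2) of \cref{l:directed-sheaves-F} reads ``$\lambda^* F\in\omega\text{-}\lim[L^\op,\cat{C}]$ and $\epsilon_F$ is an isomorphism'', we have $\mathcal{D}=\{F\in\mathcal{E}\mid\lambda^* F\in\omega\text{-}\lim[L^\op,\cat{C}]\}$, which under the above equivalence corresponds exactly to $\omega\text{-}\lim[L^\op,\cat{C}]$. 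To check that the quasi-inverse $\Lan_{\lambda}$ does land in $\mathcal{D}$, observe that for $G\in\omega\text{-}\lim[L^\op,\cat{C}]$ the counit $\epsilon_{\Lan_{\lambda}G}$ is invertible by a triangle identity (as $\eta_G$ is an isomorphism) while $\lambda^*\Lan_{\lambda}G\cong G$ preserves codirected limits, so $\Lan_{\lambda}G$ satisfies condition~(2). This gives $\lambda^*\colon\mathcal{D}\simeq\omega\text{-}\lim[L^\op,\cat{C}]$ with quasi-inverse $\Lan_{\lambda}$. The dual argument---using that $\Ran_{\kappa}$ is fully faithful, that $\tilde{\epsilon}$ is a natural isomorphism (\cref{l:fixed-side-Kan}), and condition~(1) of \cref{l:directed-sheaves-F}---yields $\kappa^*\colon\mathcal{D}\simeq\omega\text{-}\colim[\Scott(L),\cat{C}]$ with quasi-inverse $\Ran_{\kappa}$.

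Finally I would assemble the two equivalences. Composing the quasi-inverse $\Lan_{\lambda}$ of the first with $\kappa^*$ produces an equivalence $\omega\text{-}\lim[L^\op,\cat{C}]\to\omega\text{-}\colim[\Scott(L),\cat{C}]$ given on objects by $G\mapsto\kappa^*\Lan_{\lambda}G$; symmetrically the reverse composite is $F\mapsto\lambda^*\Ran_{\kappa}F$. These are precisely the restrictions of the adjoint functors in \cref{e:adjunction}, which are therefore mutually quasi-inverse equivalences between the two subcategories. Here \cref{l:composition} is what keeps the bookkeeping consistent: it guarantees a priori that $\kappa^*\Lan_{\lambda}G$ always lies in $\omega\text{-}\colim[\Scott(L),\cat{C}]$ and $\lambda^*\Ran_{\kappa}F$ in $\omega\text{-}\lim[L^\op,\cat{C}]$, so the restricted functors have the stated codomains.

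I expect the main obstacle to be organisational rather than mathematical: keeping straight which of the four (co)units is already invertible by \cref{l:fixed-side-Kan} versus which invertibility is the genuine content carried by \cref{l:directed-sheaves-F}, and verifying that the two separately-constructed equivalences really do share the single middle category $\mathcal{D}$, so that their composite is the adjoint pair of \cref{e:adjunction} rather than merely some abstract equivalence. Once \cref{l:directed-sheaves-F} is in hand, no further limit or colimit computations should be required.
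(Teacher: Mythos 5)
Your proposal is correct and follows essentially the same route as the paper's proof: the paper likewise introduces the intermediate subcategory of $[\F,\cat{C}]$ cut out by the two equivalent conditions of \cref{l:directed-sheaves-F} (as $\cat{F}_1=\cat{F}_2$), shows each Kan-extension adjunction restricts to an equivalence with it using \cref{l:fixed-side-Kan} and the triangle identities, and composes. No gaps.
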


\begin{proof}
	Denote by $\cat{F}_1$ the full subcategory of $[\mathcal{F}, \cat{C}]$ defined by those functors $F$ such that $\lambda^* F\in \omega\text{-}\lim[L^\op, \cat{C}]$ and $\epsilon_F$ is an isomorphism. We claim that the adjunction 
	\[
	\Lan_{\lambda}\dashv \lambda^*\colon [\mathcal{F}, \cat{C}] \to [L^\op,\cat{C}]
	\] 
	restricts to an equivalence between $\cat{F}_1$ and $\omega\text{-}\lim[L^\op, \cat{C}]$. 
	
	By definition of $\cat{F}_1$, $\lambda^*$ restricts to a functor $\cat{F}_1 \to \omega\text{-}\lim[L^\op, \cat{C}]$. To see that $\Lan_{\lambda}$ restricts to a functor ${\omega\text{-}\lim[L^\op, \cat{C}]\to \cat{F}_1}$, consider an arbitrary $G\in \omega\text{-}\lim[L^\op, \cat{C}]$. Then $\eta_G\colon G\cong \lambda^*\Lan_{\lambda} G$ by \cref{second-adj} in \cref{l:fixed-side-Kan}, and so we see that $\lambda^*\Lan_{\lambda} G$ belongs to $\omega\text{-}\lim[L^\op, \cat{C}]$. Furthermore, using again the fact that $\eta$ is a natural isomorphism, it follows from the triangle identities for adjunctions that $\epsilon \Lan_{\lambda}$ is a natural isomorphism. In particular, $\epsilon_{\Lan_{\lambda} G}$ is an isomorphism. Hence, $\Lan_{\lambda} G \in \cat{F}_1$.
	
	This shows that the adjunction $\Lan_{\lambda}\dashv \lambda^*$ restricts to an adjunction between $\cat{F}_1$ and $\omega\text{-}\lim[L^\op, \cat{C}]$. In turn, the latter adjunction is an equivalence because all objects of $\cat{F}_1$ are fixed by definition, and all objects of $\omega\text{-}\lim[L^\op, \cat{C}]$ are fixed by \cref{second-adj} in \cref{l:fixed-side-Kan}.
	
	Now, let $\cat{F}_2$ be the full subcategory of $[\mathcal{F}, \cat{C}]$ defined by those functors $F$ such that $\kappa^* F\in \omega\text{-}\colim[\Scott(L), \cat{C}]$ and $\tilde{\eta}_F$ is an isomorphism. By similar reasoning to the one above, the adjunction $\kappa^*\dashv \Ran_{\kappa}\colon [\Scott(L),\cat{C}]\to [\mathcal{F}, \cat{C}]$ restricts to an equivalence between $\omega\text{-}\colim[\Scott(L), \cat{C}]$ and $\cat{F}_2$. 
	
	Finally, the statement follows by noting that $\cat{F}_1=\cat{F}_2$ by \cref{l:directed-sheaves-F}.
\end{proof}

Suppose for a moment that $\cat{C}$ is a bicomplete regular category and the domain $L$ is a stably continuous lattice. Then $\Scott(L)$ is also a stably continuous lattice; in particular, a complete lattice. So, we can consider the category $\KSh(\Scott(L)^\op,\cat{C})$ of $\cat{C}$-valued $\K$-sheaves over $\Scott(L)^\op$. This is a full subcategory of $[\Scott(L),\cat{C}]$ and we have a composite functor
\begin{equation*}
	\begin{tikzcd}[column sep = 4em]
	{\KSh(\Scott(L)^\op,\cat{C})} \arrow[hookrightarrow]{r} & {[\Scott(L),\cat{C}]} \arrow{r}{\lambda^*\circ \Ran_{\kappa}} & {[L^\op,\cat{C}]}.
	\end{tikzcd}
\end{equation*}

In order to characterise the image of this functor, we introduce the notion of \emph{$\Omega$-sheaf}.
Intuitively, whereas $\ksat$-sheaves on a (stably compact) space $X$ are defined on the lattice $\K(X)$ of compact saturated subsets of $X$, $\Omega$-sheaves are defined on the frame $\Omega(X)$ of opens of $X$---hence the nomenclature. The relation between $\Omega$-sheaves and ordinary sheaves is explained in \cref{p:sheaf-om-sheaf} and \cref{rem:Omega-sheaves-vs-sheaves-caveat}.

\begin{definition}\label{d:Om-sheaf}
Let $\cat{D}$ be a category and let $P$ be a complete lattice.
	A \emph{$\cat{D}$-valued $\Omega$-sheaf} on $P$ is a functor $F \colon P^\op \to \cat{D}$ that satisfies the following properties:
	\begin{enumerate}[label=\textnormal{($\Omega$\arabic*)}]
		\item\label{Om-empty} $F(\bot)$ is a subterminal object of $\cat{D}$, i.e.\ the unique arrow $F(\bot)\to\1$ is monic.
		\item\label{Om-pullback} For all $x, y \in P$, the following is a pullback square in $\cat{D}$:
		\[\begin{tikzcd}[column sep = 4em]
			F(x \vee y) \arrow{r}{\ros{F}{x \vee y}{ x}} \arrow[swap]{d}{\ros{F}{x \vee y}{ y}} \arrow[dr, phantom, "\lrcorner", very near start, xshift=-6pt] & F(x) \arrow{d}{\ros{F}{x}{ x \wedge y}}\\
			F(y) \arrow{r}{\ros{F}{y}{ x \wedge y}} & F(x \wedge y)
		\end{tikzcd}\]
		\item \label{Om-bigvee} $F$ preserves codirected limits. I.e., for all directed subsets $\D\subseteq P$, the~cone $(\ros{F}{\sup \D}{p} \colon F(\sup \D) \to F(p))_{p \in \D}$ is a limit of the restriction of $F$ to~$\D$.
	\end{enumerate}
	We denote by $\OSh(P,\cat{D})$ the full subcategory of $[P^\op, \cat{D}]$ defined by the $\Omega$-sheaves. 
\end{definition}

\begin{remark}
	Note that conditions~\ref{Om-empty} and~\ref{Om-pullback} coincide, respectively, with conditions~\ref{K-empty} and~\ref{K-pullback} in the definition of $\K$-sheaf. By contrast,~\ref{Om-bigvee} is \emph{dual} to~\ref{K-codirected-meet}.
\end{remark}

\begin{remark}
	By \cref{l:Om3-K3}, if $P$ is a continuous lattice and $F \colon P^\op \to \cat{C}$ is a functor, then condition \ref{Om-bigvee} can be equivalently stated as follows: For all $x \in P$, the cone $(\ros{F}{x}{y}\colon F(x) \to F(y))_{y \in \dd x}$ is a (codirected) limit of the restriction of $F$ to $\dd x$.
\end{remark}

\begin{lemma} \label{l:rewrite-2-3}
	Suppose that $\cat{D}$ is a category and $P$ is a complete lattice. The following statements are equivalent for all functors $F\colon P^\op\to\cat{D}$:
	\begin{enumerate}
		\item \label{i:om2-3} $F$ satisfies \ref{Om-pullback} and \ref{Om-bigvee}.
		\item \label{i:limit} For every non-empty subset $S\subseteq P$ that is closed under binary meets, the cone $(\ros{F}{\sup S}{y} \colon F(\sup S) \to F(y))_{y \in S}$ is a limit of the restriction of $F$ to $S$.
	\end{enumerate}
\end{lemma}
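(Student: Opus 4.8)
The plan is to establish the two implications separately; the substance lies in deriving \ref{i:limit} from \ref{i:om2-3}.

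For \ref{i:limit} $\Rightarrow$ \ref{i:om2-3}, I would feed well-chosen meet-closed sets into \ref{i:limit}. Taking $S = \{x, y, x\wedge y\}$ (meet-closed, with $\sup S = x\vee y$) identifies the limit of the cospan $F(x)\to F(x\wedge y)\leftarrow F(y)$ with $F(x\vee y)$, which is exactly \ref{Om-pullback}. For \ref{Om-bigvee}, given a directed $\D\subseteq P$ I would apply \ref{i:limit} to its meet-closure $\overline{\D}$, noting $\sup\overline{\D}=\sup\D$; it then suffices to check that the inclusion $\D\hookrightarrow\overline{\D}$ is initial (viewing both as subcategories of $P^\op$), so that the limit over $\overline{\D}$ restricts to one over $\D$. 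This is immediate: every $y = p_1\wedge\cdots\wedge p_n\in\overline{\D}$ satisfies $y\le p_i\in\D$, and directedness of $\D$ makes the relevant comma category connected.

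For \ref{i:om2-3} $\Rightarrow$ \ref{i:limit} I would first settle the case of \emph{finite} meet-closed $S$ by induction on $\lvert S\rvert$, and then pass to arbitrary $S$. In the inductive step, choose a maximal element $m\in S$ and set $S' = S\setminus\{m\}$ and $I = \{s\in S'\mid s\le m\}$; both are meet-closed, nonempty when $\lvert S\rvert\ge 2$, and strictly smaller, so the inductive hypothesis applies. Unwinding the definition of a cone over $F|_S$ (using that $m$, being maximal, is comparable in $S$ only to the elements of $I$) yields the pullback description
\[
\lim F|_S \;\cong\; \lim F|_{S'} \times_{\lim F|_{I}} F(m),
\]
where the maps are the restriction $\lim F|_{S'}\to\lim F|_{I}$ and the cone $(\ros{F}{m}{s})_{s\in I}\colon F(m)\to \lim F|_{I}$. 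By induction the three corners become $F(\sup S')$, $F(m)$ and $F(\sup I)$. Comparing this with the pullback square provided by \ref{Om-pullback} for the pair $(\sup S', m)$, namely $F(\sup S'\vee m)\cong F(\sup S')\times_{F(\sup S'\wedge m)}F(m)$, the whole step reduces to the identity $\sup I = \sup S'\wedge m$. Since $\sup S = \sup S'\vee m$, this gives $\lim F|_S\cong F(\sup S)$ through the canonical cone, as required.

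For arbitrary meet-closed $S$ I would use that the finite meet-closed subsets $T\subseteq S$ form a directed family (under inclusion) with union $S$. The finite case gives $\lim F|_T\cong F(\sup T)$, and the elements $\sup T$ form a directed family with supremum $\sup S$. The interchange $\lim F|_S\cong\lim_T\lim F|_T$ then combines with \ref{Om-bigvee}, which identifies $\lim_T F(\sup T)$ with $F(\sup S)$, to complete the proof. I expect the main obstacle to be the finite inductive step: matching the gluing pullback with the square from \ref{Om-pullback} rests on the identity $\sup I = \sup S'\wedge m$, i.e.\ on $\sup S'\wedge m = \bigvee_{s\in S'}(m\wedge s)$ together with $\{m\wedge s\mid s\in S'\} = I$; this is the one point where the distributivity of $P$, beyond mere completeness, is used.
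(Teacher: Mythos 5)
Your direction \ref{i:limit}~$\Rightarrow$~\ref{i:om2-3} is exactly the paper's: $S=\{x,y,x\wedge y\}$ for \ref{Om-pullback}, and cofinality of a directed $\D$ in its meet-closure for \ref{Om-bigvee}. For the converse the paper goes the other way around: it closes $S$ under binary \emph{joins} to obtain a directed set $T$ with $\sup T=\sup S$, applies \ref{Om-bigvee} to $T$, and argues that \ref{Om-pullback} lets every compatible cone over $F|_S$ extend uniquely to one over $F|_T$. Your route---induction on finite meet-closed sets by peeling off a maximal element $m$, then passing to arbitrary $S$ via the directed family of finite meet-closed subsets together with \ref{Om-bigvee}---is a genuinely different decomposition, and your bookkeeping is sound: $S'$ and $I$ are indeed non-empty and meet-closed, $I=\{m\wedge s\mid s\in S'\}$ because maximality of $m$ rules out $m\wedge s=m$, and the cone conditions involving $m$ concern only $I$. (One presentational caveat: since $\cat{D}$ is arbitrary, you should phrase the gluing step in terms of cones rather than of objects $\lim F|_S$ whose existence is not given in advance; the inductive hypothesis and \ref{Om-pullback} supply all the representing objects you need, so this is easily repaired.)

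The one real issue is the identity $\sup I=\sup S'\wedge m$ that you flag: it does require distributivity, and this cannot be argued away, because the lemma as stated---for an arbitrary complete lattice $P$---is false. Take $P$ to be the five-element non-distributive lattice $M_3=\{0,a,b,c,1\}$ with three pairwise incomparable atoms, and define $F\colon P^\op\to\Set$ by $F(0)=\{\ast\}$, $F(a)=F(b)=F(c)=\mathbb{Z}/2$, $F(1)=\mathbb{Z}/2\times\mathbb{Z}/2$, with $\ros{F}{1}{a}=\pi_1$, $\ros{F}{1}{b}=\pi_2$ and $\ros{F}{1}{c}=\pi_1+\pi_2$. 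Each of the three pairings is a bijection onto the corresponding product over $F(0)$, so \ref{Om-pullback} holds; \ref{Om-bigvee} is automatic since every directed subset of a finite poset has a greatest element. Yet for the meet-closed set $S=\{0,a,b,c\}$ with $\sup S=1$, the limit of $F|_S$ is the eight-element set $F(a)\times F(b)\times F(c)$, while the canonical cone from the four-element set $F(1)$ is $(x,y)\mapsto(x,y,x+y)$, which is not a bijection; so \ref{i:limit} fails. Note that the paper's own proof founders on the same point: in this example a compatible cone over $F|_S$ need not extend to the join-closure $T=S\cup\{1\}$, since the extension forced by the pair $(a,b)$ need not agree with the given component at $c$. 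All applications of the lemma in the paper take $P$ to be a frame, so nothing downstream is affected; but the hypothesis ``complete lattice'' should be strengthened to ``distributive complete lattice'' (e.g.\ a frame), and under that hypothesis your argument goes through.
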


\begin{proof}
	Suppose \ref{i:om2-3} holds and let $S\subseteq P$ be a non-empty subset closed under binary meets. The set $T\subseteq P$ obtained by closing $S$ under binary joins is directed and so, by \ref{Om-bigvee}, the cone $(\ros{F}{\sup T}{y} \colon F(\sup T) \to F(y))_{y \in T}$ is a limit of the restriction of $F$ to~$T$. It follows from \ref{Om-pullback} that the cone $(\ros{F}{\sup S}{y} \colon F(\sup S) \to F(y))_{y \in S}$ is a limit of the restriction of $F$ to $S$. Just observe that $\sup S = \sup T$ and condition \ref{Om-pullback} allows to extend any compatible cone over the restriction of $F$ to $S$ to a unique compatible cone over the restriction of $F$ to $T$.
	
	Conversely, assume that \ref{i:limit} holds. Then \ref{Om-pullback} follows by setting $S \coloneqq \{x,y, x \land y\}$. 
	With regards to \ref{Om-bigvee}, let $\D\subseteq P$ be a directed subset and let $E$ be the closure of $\D$ under binary meets.
	As $\D$ is non-empty, so is $E$. Also, since $\D$ is a cofinal subset of $E$, we have $\sup{\D} = \sup{E}$.
	By \cref{i:limit}, the cone $(\ros{F}{\sup \D}{y} \colon F(\sup \D) \to F(y))_{y \in E}$ is a limit of the restriction of $F$ to $E$.
	Moreover, a straightforward argument that uses the fact that $\D$ is a directed cofinal subset of $E$ shows that $(\ros{F}{\sup \D}{y} \colon F(\sup \D) \to F(y))_{y \in \D}$ is a limit of the restriction of $F$ to $\D$.
\end{proof}

Recall that a sheaf of sets on a topological space $X$ can be characterised as a presheaf $F \colon \Omega(X)^\op \to \Set$ such that, for every set of opens $S\subseteq \Omega(X)$ that is closed under binary intersections, the cone \[(\ros{F}{\bigcup S}{U} \colon F\big(\bigcup S\big) \to F(U))_{U \in S}\] is a limit of the restriction of $F$ to~$S$. For $S = \emptyset$, this amounts to saying that $F(\emptyset)$ is a terminal object of $\Set$, i.e.\ a one-element set, which is a strengthening of \ref{Om-empty}. More generally, for any frame $M$, a sheaf of sets over $M$ is a presheaf $F\colon M^\op \to\Set$ such that, for every set $S\subseteq M$ closed under binary meets, the cone \[(\ros{F}{\sup S}{a} \colon F\big(\sup S\big) \to F(a))_{a \in S}\] is a limit of the restriction of $F$ to~$S$.

The previous description of sheaves as ``limit-preserving presheaves'' remains valid when the category of sets is replaced with any variety of (finitary) algebras, and has been exploited to propose a notion of sheaf with values in an arbitrary category $\cat{D}$---simply by replacing $\Set$ with $\cat{D}$---see e.g.\ \cite{Gray1965}. Adopting this notion of $\cat{D}$-valued sheaf on a frame, we have the following immediate consequence of \cref{l:rewrite-2-3}:

\begin{proposition} \label{p:sheaf-om-sheaf}
	Let $\cat{D}$ be a category and let $M$ be a frame.
	The following statements are equivalent for all presheaves $F \colon M^\op \to \cat{D}$:
	\begin{enumerate}		
		\item $F$ is a $\cat{D}$-valued sheaf.
		\item $F$ is a $\cat{D}$-valued $\Omega$-sheaf such that $F(\emptyset)$ is a terminal object of $\cat{D}$.
	\end{enumerate}
\end{proposition}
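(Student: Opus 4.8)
The plan is to deduce the statement directly from \cref{l:rewrite-2-3}, the only subtlety being the treatment of the empty subset. Recall that a $\cat{D}$-valued sheaf on $M$ is required to satisfy the limit condition for \emph{every} subset $S\subseteq M$ closed under binary meets, whereas \cref{l:rewrite-2-3} quantifies only over \emph{non-empty} such subsets. So I would begin by splitting the sheaf condition into the case $S=\emptyset$ and the case $S\neq\emptyset$.

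For $S=\emptyset$ we have $\sup S=\bot$, and the restriction of $F$ to $S$ is the empty diagram, whose limit is by definition the terminal object of $\cat{D}$. Hence the sheaf condition for $S=\emptyset$ holds precisely when $F(\bot)=F(\emptyset)$ is a terminal object of $\cat{D}$. For non-empty $S$ closed under binary meets, \cref{l:rewrite-2-3} tells us that the conjunction of the corresponding limit conditions is equivalent to $F$ satisfying \ref{Om-pullback} and \ref{Om-bigvee}. Combining the two cases, $F$ is a $\cat{D}$-valued sheaf if and only if $F(\emptyset)$ is terminal \emph{and} $F$ satisfies \ref{Om-pullback} and \ref{Om-bigvee}.

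It then remains to match this characterisation with condition~(2). Since every terminal object is in particular subterminal, terminality of $F(\emptyset)$ subsumes \ref{Om-empty}; conversely, an $\Omega$-sheaf $F$ with $F(\emptyset)$ terminal is exactly a presheaf satisfying \ref{Om-pullback} and \ref{Om-bigvee} together with the requirement that $F(\emptyset)$ be terminal. Thus condition~(2) unwinds to precisely the same conjunction ``$F(\emptyset)$ is terminal, and \ref{Om-pullback} and \ref{Om-bigvee} hold'', and the equivalence follows.

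As every step is a direct unwinding of definitions together with a single appeal to \cref{l:rewrite-2-3}, I do not anticipate any genuine obstacle. The only point requiring care is the bookkeeping around the empty subset: one must note both that the instance $S=\emptyset$ of the sheaf condition is exactly the terminality of $F(\emptyset)$, and that this terminality makes the weaker subterminality requirement \ref{Om-empty} redundant, so that the two sides of the stated equivalence reduce to identical conditions.
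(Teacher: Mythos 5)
Your proof is correct and follows essentially the same route as the paper: the paper likewise derives the proposition as an immediate consequence of \cref{l:rewrite-2-3}, having already observed that the instance $S=\emptyset$ of the sheaf condition amounts to terminality of $F(\bot)$, which strengthens \ref{Om-empty}. Your explicit bookkeeping around the empty subset is exactly the point the paper leaves implicit.
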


Thus the notion of $\Omega$-sheaf of sets over a frame (and, in particular, over a topological space) coincides with the classical notion of sheaf of sets, with the only exception that we allow the empty presheaf. This is the content of the following remark:
\begin{remark}\label{rem:Omega-sheaves-vs-sheaves-caveat}
Let $M$ be a frame.
When $\cat{D} = \Set$, the only proper subterminal object is the empty set. Thus, any $\Omega$-sheaf $F \colon M^\op \to \Set$ that is not a sheaf satisfies $F(\bot) = \emptyset$. But for every $a\in M$ there is a restriction function $\ros{F}{a}{\bot}\colon F(a) \to F(\bot) = \emptyset$, which implies that $F(a) = \emptyset$. Thus, a presheaf of sets $M^\op \to \Set$ is an $\Omega$-sheaf if, and only if, it is either a sheaf or the initial presheaf (i.e., the constant presheaf of value~$\emptyset$). 

A similar reasoning applies when $\cat{D}$ is any variety of algebras whose signature contains no constant symbols, e.g.\ the variety of semigroups. On the other hand, if $\cat{D}$ is a variety of algebras whose signature contains at least one constant symbol, then it admits no proper subterminal objects (just observe that any algebra in $\cat{D}$ has at least one element). In this case, the notions of sheaf and $\Omega$-sheaf coincide for any presheaf $M^\op \to \cat{D}$.
\end{remark}

We will now proceed to compare the notions of $\K$-sheaf and $\Omega$-sheaf. To this end, we shall suppose from now on that $\cat{C}$ is a bicomplete regular category and $L$ is a stably continuous lattice.
\begin{definition}\label{def:A-Kappa-A-Omega}
	We consider the following full subcategories $\cat{A}_{\mathcal{K}}$ and $\cat{A}_{\Omega}$ of $[\F, \cat{C}]$:
	\begin{itemize}[leftmargin=*]
		\item\label{A-Kappa} $\cat{A}_{\mathcal{K}}$ consists of those $F$ such that $\kappa^*F$ is a $\K$-sheaf and $\tilde{\eta}_F$ is an isomorphism. 
		\item\label{A-Omega} $\cat{A}_{\Omega}$ consists of those $F$ such that $\lambda^*F$ is an $\Omega$-sheaf and $\epsilon_F$ is an isomorphism. 
		\end{itemize}
\end{definition}

The next lemma follows by reasoning as in the proof of \cref{p:directed-sheaves}:
\begin{lemma}\label{l:equiv-fixed}
The following statements hold:
\begin{enumerate}[label=(\alph*)]
\item\label{eq:A_K} The adjunction $\kappa^*\dashv \Ran_{\kappa}$ restricts to an equivalence ${\KSh(\Scott(L)^\op,\cat{C})\simeq \cat{A}_{\mathcal{K}}}$.
\item\label{eq:A_Omega} The adjunction $\Lan_{\lambda}\dashv \lambda^*$ restricts to an equivalence $\OSh(L,\cat{C})\simeq \cat{A}_{\Omega}$.
\end{enumerate}
\end{lemma}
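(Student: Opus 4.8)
The plan is to mirror, for each of the two parts, the ``restriction'' arguments already used in the proof of \cref{p:directed-sheaves}, the only change being that the conditions ``$\kappa^* F \in \omega\text{-}\colim[\Scott(L),\cat{C}]$'' and ``$\lambda^* F \in \omega\text{-}\lim[L^\op,\cat{C}]$'' are replaced by the stronger requirements that $\kappa^* F$ be a $\K$-sheaf and that $\lambda^* F$ be an $\Omega$-sheaf. The single fact that powers both arguments is \cref{l:fixed-side-Kan}: the counit $\tilde\epsilon$ of $\kappa^*\dashv\Ran_\kappa$ and the unit $\eta$ of $\Lan_\lambda\dashv\lambda^*$ are natural isomorphisms. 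Since being a $\K$-sheaf (resp.\ an $\Omega$-sheaf) is a property invariant under natural isomorphism---conditions \ref{K-empty}--\ref{K-codirected-meet} and \ref{Om-empty}--\ref{Om-bigvee} are phrased purely in terms of (co)limits and subterminal objects---these isomorphisms transport the sheaf conditions across the adjunctions.

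For \cref{eq:A_K}, I would first note that $\kappa^*$ restricts to a functor $\cat{A}_{\mathcal{K}}\to\KSh(\Scott(L)^\op,\cat{C})$ directly from the definition of $\cat{A}_{\mathcal{K}}$, which requires $\kappa^*F$ to be a $\K$-sheaf. For the opposite direction, given a $\K$-sheaf $F$ I would use the isomorphism $\tilde\epsilon_F\colon \kappa^*\Ran_\kappa F\cong F$ from \cref{first-adj} in \cref{l:fixed-side-Kan} to conclude that $\kappa^*\Ran_\kappa F$ is again a $\K$-sheaf, while the triangle identities combined with the fact that $\tilde\epsilon$ is a natural isomorphism force $\tilde\eta_{\Ran_\kappa F}$ to be an isomorphism; hence $\Ran_\kappa$ restricts to a functor $\KSh(\Scott(L)^\op,\cat{C})\to\cat{A}_{\mathcal{K}}$. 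The restricted adjunction is then an equivalence because its unit is an isomorphism on every object of $\cat{A}_{\mathcal{K}}$ (by definition) and its counit is an isomorphism on every $\K$-sheaf (again by \cref{first-adj} in \cref{l:fixed-side-Kan}).

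\Cref{eq:A_Omega} is proved by the dual argument: $\lambda^*$ restricts to $\cat{A}_{\Omega}\to\OSh(L,\cat{C})$ by the definition of $\cat{A}_{\Omega}$, and $\Lan_\lambda$ restricts to $\OSh(L,\cat{C})\to\cat{A}_{\Omega}$ because $\eta_G\colon G\cong\lambda^*\Lan_\lambda G$ from \cref{second-adj} in \cref{l:fixed-side-Kan} transports the $\Omega$-sheaf property to $\lambda^*\Lan_\lambda G$, while the triangle identities make $\epsilon_{\Lan_\lambda G}$ an isomorphism. The restricted adjunction is an equivalence since $\epsilon$ is an isomorphism on $\cat{A}_{\Omega}$ by definition and $\eta$ is an isomorphism on $\OSh(L,\cat{C})$ by \cref{second-adj} in \cref{l:fixed-side-Kan}. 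I do not expect a genuine obstacle here: in contrast to \cref{p:directed-sheaves}, neither part composes the two adjunctions, so \cref{l:composition} is never invoked and no preservation-of-(co)limits statement has to be re-established. The only point needing (minor) care is the transport of the defining sheaf conditions along the natural isomorphisms $\tilde\epsilon$ and $\eta$, which is entirely formal.
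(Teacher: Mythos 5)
Your proposal is correct and is essentially the paper's own argument: the paper proves this lemma simply by observing that it ``follows by reasoning as in the proof of \cref{p:directed-sheaves}'', and your write-up spells out exactly that reasoning (transport of the sheaf conditions along the natural isomorphisms $\tilde\epsilon$ and $\eta$ from \cref{l:fixed-side-Kan}, plus the triangle identities to see that the other (co)unit component is invertible). Nothing is missing.
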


\begin{lemma}\label{l:AK-included-in-AOm}
$\cat{A}_{\mathcal{K}}$ is a (full) subcategory of $\cat{A}_{\Omega}$.
\end{lemma}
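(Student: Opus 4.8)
The plan is to reduce the statement to a containment of objects. Since $\cat{A}_{\mathcal{K}}$ and $\cat{A}_{\Omega}$ are, by \cref{def:A-Kappa-A-Omega}, \emph{full} subcategories of $[\F,\cat{C}]$, it suffices to show that every object of $\cat{A}_{\mathcal{K}}$ lies in $\cat{A}_{\Omega}$; fullness is then automatic. So I would fix an $F\in[\F,\cat{C}]$ such that $\kappa^*F$ is a $\K$-sheaf and $\tilde{\eta}_F$ is an isomorphism, and prove that $\lambda^*F$ is an $\Omega$-sheaf and $\epsilon_F$ is an isomorphism.

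Two of the three $\Omega$-sheaf conditions, together with the invertibility of $\epsilon_F$, come directly from the machinery already developed. Being a $\K$-sheaf, $\kappa^*F$ satisfies~\ref{K-codirected-meet}, hence preserves directed colimits, i.e.\ $\kappa^*F\in\omega\text{-}\colim[\Scott(L),\cat{C}]$; combined with the hypothesis that $\tilde{\eta}_F$ is an isomorphism, this is exactly condition~\ref{i:K-like} of \cref{l:directed-sheaves-F}. The implication \ref{i:K-like}$\Rightarrow$\ref{i:Om-like} then yields at once that $\epsilon_F$ is an isomorphism and that $\lambda^*F\in\omega\text{-}\lim[L^\op,\cat{C}]$, the latter being precisely condition~\ref{Om-bigvee} for $\lambda^*F$. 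Condition~\ref{Om-empty} is equally immediate, since the principal filter $\lambda(\bot)=\u\bot=L$ coincides with the largest Scott-open filter $\kappa(\top)=L$; thus $\lambda^*F(\bot)=F(L)=\kappa^*F(\top)$, which is subterminal because $\kappa^*F$ satisfies~\ref{K-empty}. It therefore only remains to verify~\ref{Om-pullback} for $\lambda^*F$.

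For the pullback condition I would argue directly from the universal property, using the identification $F(\u z)\cong\lim_{z\in k}F(k)$ (the limit ranging over the Scott-open filters $k$ containing $z$, which form a codirected set by~\cref{U(s)-codirected} in \cref{l:Scott-filters-prop}) furnished by $\tilde{\eta}_F$ being an isomorphism, cf.\ \cref{r:explicitly}. Fixing $x,y\in L$ and rewriting the four maps of the candidate square as restrictions of the relevant codirected limits, I must show that $F(\u(x\vee y))$ is the pullback of $F(\u x)\to F(\u(x\wedge y))\leftarrow F(\u y)$. Given a test object $T$ with maps $u\colon T\to F(\u x)$ and $v\colon T\to F(\u y)$ agreeing in $F(\u(x\wedge y))$, I would build the mediating map $w\colon T\to F(\u(x\vee y))=\lim_{x\vee y\in k}F(k)$ componentwise: for each Scott-open filter $k\ni x\vee y$, \cref{l:wilker} produces Scott-open filters $k_1\ni x$ and $k_2\ni y$ with $k_1\cap k_2\subseteq k$. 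Since $x,y\in k_1\vee k_2$ we have $x\wedge y\in k_1\vee k_2$, so the hypothesis that $u$ and $v$ agree in $F(\u(x\wedge y))$ forces the composites $T\to F(k_1)\to F(k_1\vee k_2)$ and $T\to F(k_2)\to F(k_1\vee k_2)$ to coincide; the pullback square~\ref{K-pullback} for the pair $k_1,k_2$ (whose top-left corner is $F(k_1\cap k_2)$ and bottom-right corner $F(k_1\vee k_2)$) then yields a map $T\to F(k_1\cap k_2)$, and composing with the restriction $F(k_1\cap k_2)\to F(k)$ defines the $k$-component of $w$.

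The main obstacle will be the bookkeeping that turns this pointwise prescription into a genuine morphism of cones and establishes the triangle identities and uniqueness. Concretely, I expect to check: that each component is independent of the auxiliary filters $k_1,k_2$ chosen via \cref{l:wilker}; that the family $(w_k)_{x\vee y\in k}$ is compatible and hence defines $w$; and that $w$ restricts to $u$ and $v$ (this last step again uses \cref{l:wilker} to compare a given $k\in\{k\mid x\in k\}$ with the filters supplied for it). All of these rely on the codirectedness of the index sets $\{k\in\Scott(L)\mid z\in k\}$ and on the uniqueness clause of the universal property of the pullbacks~\ref{K-pullback}; uniqueness of $w$ itself follows because the limit projections $F(\u(x\vee y))\to F(k)$ are jointly monic while each $w_k$ is forced by that same pullback. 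This componentwise verification, bridging principal filters and Scott-open filters through Wilker's condition, is the technical heart of the argument.
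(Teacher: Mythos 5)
Your proposal is correct and follows essentially the same route as the paper: reduce to a containment of objects, get \ref{Om-empty} from the identity $\u\bot = L$, get \ref{Om-bigvee} and the invertibility of $\epsilon_F$ from \cref{l:directed-sheaves-F}, and verify \ref{Om-pullback} using the limit formula $\lambda^*F(z)\cong\lim_{z\in k}\kappa^*F(k)$, condition \ref{K-pullback} for $\kappa^*F$, and \cref{l:wilker}. The only difference is presentational: the paper packages your componentwise universal-property check as a chain of canonical isomorphisms (commuting the pullback past the two codirected limits and invoking coinitiality of $\{k\cap k'\mid x\in k,\ y\in k'\}$ in $\{l\mid x\vee y\in l\}$), which absorbs the bookkeeping you flag as the technical heart.
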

\begin{proof}
Fix an arbitrary functor $F\colon \F\to\cat{C}$ that belongs to $\cat{A}_{\mathcal{K}}$. We must prove that $\lambda^*F$ is an $\Omega$-sheaf and $\epsilon_F$ is an isomorphism. Since $\tilde{\eta}_F\colon F\to \Ran_{\kappa}\kappa^* F$ is an isomorphism, the formula for pointwise right Kan extensions implies that, for all $x\in L$, 
\[
\lambda^*F(x) \cong \lim_{x\in k} \kappa^* F(k)
\]
where $k\in\Scott(L)$. Just observe that $x\in k$ if and only if $\u x\subseteq k$ in $\F$. 

Condition \ref{Om-empty} in \cref{d:Om-sheaf} is trivially satisfied, since the unique (Scott-open) filter containing the bottom element of $L$ is the improper one, which is the bottom element of $\Scott(L)^\op$. Hence $\lambda^*F(\bot)\cong \kappa^* F(L)$, which is subterminal because $\kappa^* F$ satisfies \ref{K-empty}. 
For \ref{Om-pullback}, for all $x,y\in L$ we have
\begin{align*}
\lambda^*F(x) \times_{\lambda^*F(x\wedge y)} \lambda^*F(y) & \cong \lim_{x\in k}{\kappa^* F(k)} \times_{\lim{\kappa^* F(k\vee k')}} \lim_{y\in k'}{\kappa^* F(k')} \\
& \cong \lim_{x\in k, \, y\in k'}{(\kappa^* F(k) \times_{\kappa^* F(k\vee k')} \kappa^* F(k'))} \\
& \cong \lim_{x\in k, \, y\in k'}{\kappa^*F(k\wedge k')} \tag*{$\kappa^* F$ satisfies \ref{K-pullback}}\\
& \cong \lim_{x\vee y \in l}{\kappa^* F(l)} \tag*{\cref{l:wilker} and coinitiality\footnotemark}\\
& \cong \lambda^*F(x\vee y)
\end{align*}
where in the first step we used the fact that $\{l\mid x\wedge y\in l\} = \{k\vee k' \mid x\in k, \, y\in k'\}$ and so 
\[
\lim_{x\wedge y \in l}{\kappa^* F(l)} \cong \lim_{x\in k, \, y\in k'}{\kappa^* F(k\vee k')}.
\]
Finally, since $\kappa^* F$ preserves directed colimits (by definition of $\K$-sheaf) and $\tilde{\eta}_F$ is an isomorphism, \cref{l:directed-sheaves-F} entails that $\lambda^* F$ satisfies \ref{Om-bigvee} and $\epsilon_F$ is an isomorphism.
\footnotetext{By \emph{coinitiality} we understand the order-theoretic dual to the notion of cofinality.}
\end{proof}

We record the following immediate consequence of \cref{l:equiv-fixed,l:AK-included-in-AOm}. Recall that $L$ denotes an arbitrary stably continuous lattice, and $\cat{C}$ a bicomplete regular category.
\begin{proposition}\label{pr:inclusion-K-into-Omega}
There is a fully faithful functor 
\begin{equation}\label{eq:composite-ff}
\KSh(\Scott(L)^\op,\cat{C}) \to \OSh(L,\cat{C})
\end{equation}
given by the following composition:
\begin{equation*}
\KSh(\Scott(L)^\op,\cat{C}) \simeq \cat{A}_{\mathcal{K}} \hookrightarrow \cat{A}_{\Omega} \simeq \OSh(L,\cat{C}).
\end{equation*}
\end{proposition}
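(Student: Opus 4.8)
The plan is to observe that the statement is a purely formal consequence of the two immediately preceding lemmas: the functor in~\cref{eq:composite-ff} is assembled from three legs, each of which has already been shown to be fully faithful, and full faithfulness is stable under composition. I would begin by recalling the three elementary facts that underlie the argument: every equivalence of categories is fully faithful; the inclusion of a full subcategory is fully faithful by definition; and a composite of fully faithful functors is again fully faithful.

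With these in hand, I would decompose the asserted functor into its three constituent legs and identify each as fully faithful. The first leg $\KSh(\Scott(L)^\op,\cat{C}) \simeq \cat{A}_{\mathcal{K}}$ is the equivalence furnished by \cref{eq:A_K} in \cref{l:equiv-fixed} (realised by $\Ran_{\kappa}$), and is therefore fully faithful. The middle leg $\cat{A}_{\mathcal{K}} \hookrightarrow \cat{A}_{\Omega}$ is the full subcategory inclusion provided by \cref{l:AK-included-in-AOm}, and is fully faithful as such. The last leg $\cat{A}_{\Omega} \simeq \OSh(L,\cat{C})$ is the equivalence furnished by \cref{eq:A_Omega} in \cref{l:equiv-fixed} (realised by $\lambda^*$), and is again fully faithful. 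Composing the three, the functor in~\cref{eq:composite-ff} is fully faithful, as claimed.

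The only point requiring a little care, and essentially the only place where the bookkeeping could go wrong, is to ensure that the three legs are composable with matching source and target and are traversed in the correct directions: the equivalence of \cref{eq:A_K} is used in the direction $\KSh(\Scott(L)^\op,\cat{C}) \to \cat{A}_{\mathcal{K}}$, whereas the equivalence of \cref{eq:A_Omega}, stated as $\OSh(L,\cat{C}) \simeq \cat{A}_{\Omega}$, must be used in the reverse direction $\cat{A}_{\Omega} \to \OSh(L,\cat{C})$. Tracing these choices through, the resulting composite sends a $\K$-sheaf $F$ to $\lambda^*\Ran_{\kappa}F$, so it coincides, up to the canonical identifications, with the restriction of the functor $\lambda^*\circ\Ran_{\kappa}$ discussed just before \cref{d:Om-sheaf}. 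I do not anticipate any genuine obstacle here: all the substantive content resides in \cref{l:equiv-fixed,l:AK-included-in-AOm}, and the present statement is merely their assembly.
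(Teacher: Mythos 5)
Your proposal is correct and follows exactly the paper's argument: the proposition is stated there as an immediate consequence of \cref{l:equiv-fixed} and \cref{l:AK-included-in-AOm}, i.e.\ precisely the composition of two equivalences with a full subcategory inclusion, all of which are fully faithful. Your additional check that the equivalences are traversed in the right directions (via $\Ran_{\kappa}$ and $\lambda^*$ respectively, so that the composite is $F\mapsto\lambda^*\Ran_{\kappa}F$) is accurate and consistent with the discussion preceding \cref{d:Om-sheaf}.
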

Next, we shall see that the fully faithful functor in the previous proposition is an equivalence of categories whenever directed colimits in $\cat{C}$ commute with finite limits. 

\begin{remark}
Directed colimits commute with finite limits in $\Set$ (more generally, in any Grothendieck topos), as well as in algebraic categories (i.e., categories of models of Lawvere theories, or equivalently varieties of finitary algebras); see e.g.\ \cite[Corollary~3.4.3]{Borceux2}. 

Furthermore, suppose $\cat{D}$ is a Barr-exact category with a regular generator that admits all small copowers. Then, by a result of Vitale~\cite{Vitale1998}, directed colimits in $\cat{D}$ exist and commute with finite limits if, and only if, $\cat{D}$ is equivalent to the \emph{localization} (i.e., a reflective subcategory such that the reflector preserves finite limits) of an algebraic category.
\end{remark}

The following theorem provides an equivalence between $\K$-sheaves and $\Omega$-sheaves, and is akin to a result of Lurie for sheaves on locally compact Hausdorff spaces with values in $\infty$-categories \cite[Corollary~7.3.4.10]{Lurie2009}. The two results are incomparable: we work with ordinary categories but consider, more generally, sheaves on stably continuous lattices. 
\begin{theorem}\label{equiv-Om-K-sheaves}
If directed colimits in $\cat{C}$ commute with finite limits, there is an equivalence of categories $\KSh(\Scott(L)^\op,\cat{C}) \simeq \OSh(L,\cat{C})$.
\end{theorem}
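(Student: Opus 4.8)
The plan is to promote the fully faithful functor of \cref{pr:inclusion-K-into-Omega} to an equivalence by proving it is essentially surjective. Since that functor factors as
\[
\KSh(\Scott(L)^\op,\cat{C}) \simeq \cat{A}_{\mathcal{K}} \hookrightarrow \cat{A}_{\Omega} \simeq \OSh(L,\cat{C}),
\]
where the outer arrows are the equivalences of \cref{l:equiv-fixed} and the middle arrow is the full subcategory inclusion of \cref{l:AK-included-in-AOm}, it suffices to show that $\cat{A}_{\mathcal{K}}$ and $\cat{A}_{\Omega}$ have the same objects; equivalently, that the inclusion $\cat{A}_{\mathcal{K}}\subseteq\cat{A}_{\Omega}$ proved in \cref{l:AK-included-in-AOm} is an equality. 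This is the only step in which the hypothesis on directed colimits enters. So I fix $F\in\cat{A}_{\Omega}$ and aim to show $F\in\cat{A}_{\mathcal{K}}$, i.e.\ that $\kappa^*F$ is a $\K$-sheaf over $\Scott(L)^\op$ and $\tilde{\eta}_F$ is an isomorphism.

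Two of the three $\K$-sheaf axioms, together with the condition on $\tilde{\eta}_F$, come for free. Because $F\in\cat{A}_{\Omega}$, the presheaf $\lambda^*F$ is an $\Omega$-sheaf: in particular it satisfies \ref{Om-bigvee}, hence preserves codirected limits, and $\epsilon_F$ is an isomorphism. This is precisely condition~\ref{i:Om-like} of \cref{l:directed-sheaves-F}, so the equivalent condition~\ref{i:K-like} holds, giving that $\kappa^*F$ preserves directed colimits (which is exactly \ref{K-codirected-meet}) and that $\tilde{\eta}_F$ is an isomorphism. For \ref{K-empty}, note that the bottom element of $\Scott(L)^\op$ is the improper filter $L=\u\bot$, which lies in the images of both $\kappa$ and $\lambda$; thus $\kappa^*F(L)\cong\lambda^*F(\bot)$, and the right-hand side is subterminal since $\lambda^*F$ satisfies \ref{Om-empty}.

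The crux is therefore \ref{K-pullback} for $\kappa^*F$, and this is where I would use that directed colimits commute with finite limits in $\cat{C}$. Since $\epsilon_F$ is an isomorphism, \cref{r:explicitly} gives $\kappa^*F(k)\cong\colim_{x\in k}\lambda^*F(x)$ for every $k\in\Scott(L)$, a directed colimit of the restriction of $\lambda^*F$ to the filter $k$. Fixing $k,k'\in\Scott(L)$, I would index over the pairs $(x,y)\in k\times k'$ (a directed system, as $k,k'$ are filters) the diagram of cospans $\lambda^*F(x)\to\lambda^*F(x\wedge y)\leftarrow\lambda^*F(y)$, whose pullback is $\lambda^*F(x\vee y)$ by \ref{Om-pullback}. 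Because directed colimits commute with finite limits, the colimit of these pullbacks is the pullback of the colimits:
\[
\colim_{(x,y)}\lambda^*F(x\vee y)\;\cong\;\Big(\colim_{(x,y)}\lambda^*F(x)\Big)\times_{\colim_{(x,y)}\lambda^*F(x\wedge y)}\Big(\colim_{(x,y)}\lambda^*F(y)\Big).
\]

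It then remains to identify the four colimits by cofinality. As $k$ and $k'$ are directed index sets, the first two factors on the right are $\kappa^*F(k)$ and $\kappa^*F(k')$. The family $\{x\wedge y\mid x\in k,\ y\in k'\}$ is coinitial in the join $k\vee k'$ computed in $\Scott(L)$ — this is exactly the identity $k\vee k'=\u\{u\wedge v\mid u\in k,\ v\in k'\}$, valid because $\Scott(L)$ is stably continuous — so the pullback base is $\kappa^*F(k\vee k')$; and $\{x\vee y\mid x\in k,\ y\in k'\}$ equals $k\cap k'$, so the left-hand colimit is $\kappa^*F(k\cap k')$. Matching the structure maps, and recalling that joins in $\Scott(L)^\op$ are intersections and meets are joins in $\Scott(L)$, shows that the resulting square is exactly the one required by \ref{K-pullback}. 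Hence $F\in\cat{A}_{\mathcal{K}}$, completing the proof. I expect the main obstacle to be the bookkeeping in these cofinality identifications — verifying that the cocones of the four colimits assemble into the correct cospan — since the conceptual content is entirely carried by the interchange of directed colimits with the pullback.
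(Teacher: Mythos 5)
Your proposal is correct and follows essentially the same route as the paper's proof: reduce to showing $\cat{A}_{\Omega}\subseteq\cat{A}_{\mathcal{K}}$, obtain \ref{K-codirected-meet} and the invertibility of $\tilde{\eta}_F$ from \cref{l:directed-sheaves-F}, identify $\kappa^*F(L)$ with $\lambda^*F(\bot)$ for \ref{K-empty}, and derive \ref{K-pullback} by interchanging the directed colimits $\kappa^*F(k)\cong\colim_{x\in k}\lambda^*F(x)$ with the pullback, using the identities $k\vee k'=\u\{x\wedge y\mid x\in k,\ y\in k'\}$ and $k\wedge k'=\{x\vee y\mid x\in k,\ y\in k'\}$. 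The paper leaves the cofinality bookkeeping at the same level of detail as you do, so nothing is missing.
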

\begin{proof}
Consider the fully faithful functor $\KSh(\Scott(L)^\op,\cat{C}) \to \OSh(L,\cat{C})$ in \cref{eq:composite-ff}. In view of the definition of the latter, it suffices to show that~$\cat{A}_{\Omega}$ is a (full) subcategory of~$\cat{A}_{\mathcal{K}}$, for then $\cat{A}_{\mathcal{K}} = \cat{A}_{\Omega}$. To this end, fix an arbitrary functor $F\colon \F \to \cat{C}$ in~$\cat{A}_{\Omega}$. We must show that $\kappa^*F$ is a $\K$-sheaf and $\tilde{\eta}_F$ is an isomorphism.
Since the component of the counit $\epsilon_F\colon \Lan_{\lambda}\lambda^* F\to F$ is an isomorphism, for all $k\in\Scott(L)$ we have
\[
\kappa^* F(k) \cong \colim_{x\in k} \lambda^* F(x)
\]
by the formula for pointwise left Kan extensions. Note that the colimit above is directed because $k$ is codirected and $\lambda^* F$ is contravariant. 

Clearly, $\kappa^* F$ satisfies condition \ref{K-empty} in \cref{d:K-sheaf-on-complete-lattice}. 
Just observe that the bottom element of $\Scott(L)^\op$ is the improper filter $L$, and so every arrow in the colimit cocone 
\[
\{\lambda^* F(x) \to \kappa^* F(L)\mid x\in L\}
\]
factors through $\lambda^* F(\bot) \to \kappa^* F(L)$. Hence $\kappa^* F(L) \cong \lambda^* F (\bot)$, which is subterminal because $\lambda^* F$ satisfies \ref{Om-empty}.
For \ref{K-pullback}, using the fact that directed colimits in $\cat{C}$ commute with finite limits, for all $k,l\in \Scott(L)$ we have
\begin{align*}
\kappa^* F(k) \times_{\kappa^* F(k\vee l)} \kappa^* F(l) & \cong \colim_{x\in k}{\lambda^* F(x)} \times_{\colim{\lambda^* F(x\wedge y)}} \colim_{y\in l}{F(y)} \\
& \cong \colim_{x\in k, \, y\in l}{(\lambda^* F(x) \times_{\lambda^* F(x\wedge y)} \lambda^* F(y))} \\
& \cong \colim_{x\in k, \, y\in l}{\lambda^* F(x\vee y)}  \tag*{$\lambda^* F$ satisfies \ref{Om-pullback}} \\
& \cong \colim_{z\in k\wedge l}{\lambda^* F(z)} \\ 
& \cong \kappa^* F(k\wedge l)
\end{align*}
where in the first step we used the fact that the supremum of the Scott-open filters $k$ and $l$ is $\u \{x\wedge y\mid x\in k, \, y\in l\}$ and thus, by coinitiality,
\[
\colim_{z\in k\vee l}{\lambda^* F(z)} \cong \colim_{x\in k, \, y\in l}{\lambda^* F(x\wedge y)}.
\]
Similarly, the penultimate step holds because $k\wedge l = \{x\vee y\mid x\in k, \, y\in l\}$.
Moreover, because $\lambda^* F$ preserves codirected limits (by definition of $\Omega$-sheaf) and $\epsilon_F$ is an isomorphism, \cref{l:directed-sheaves-F} shows that $\kappa^* F$ satisfies \ref{K-codirected-meet} and $\tilde{\eta}_F$ is an isomorphism.
\end{proof}

The equivalence of categories in the previous theorem induces an equivalence of sheaf representations in the following sense. 

\begin{definition}
For every object $A$ of a category $\cat{D}$, an \emph{$\Omega$-sheaf representation} of $A$ over a complete lattice $P$ is a pair $(F,\phi)$ where $F\colon P^\op \to\cat{D}$ is an $\Omega$-sheaf and $\phi\colon A \to F(\top)$ an isomorphism in $\cat{D}$. 

We denote by $\OmShr{A}(P,\cat{D})$ the category of $\Omega$-sheaf representations of $A$ over $P$; a morphism $(F,\phi)\to (G,\psi)$ in this category is a natural transformation $\alpha\colon F\Rightarrow G$ such that $\alpha_{\top} \circ \phi = \psi$. 
\end{definition}

\begin{theorem}\label{thm:omega-k-representation}
If directed colimits in $\cat{C}$ commute with finite limits, then for any $A\in \cat{C}$ there is an equivalence of categories $\KShr{A}(\Scott(L)^\op,\cat{C}) \simeq \OmShr{A}(L,\cat{C})$.
\end{theorem}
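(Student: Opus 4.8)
The plan is to lift the sheaf equivalence of \cref{equiv-Om-K-sheaves} to the level of representations. The two representation categories are obtained from the respective sheaf categories by equipping each sheaf with an isomorphism from $A$ to its object of global sections, so it suffices to produce a natural isomorphism between the two ``global sections'' functors and then invoke the general principle that an equivalence compatible with global sections induces an equivalence of representation categories.

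First I would unwind the definitions to pin down the two global-sections functors. On the $\K$-side, an object of $\KShr{A}(\Scott(L)^\op,\cat{C})$ is a $\K$-sheaf $G\colon\Scott(L)\to\cat{C}$ together with an isomorphism $A\cong G(\top_{\Scott(L)^\op})$, where $\top_{\Scott(L)^\op}$ is the least element of $\Scott(L)$ under inclusion. Since $L$ is stably continuous we have $\top\ll\top$, so the principal filter $\u\top=\{\top\}$ is a Scott-open filter; as every filter contains $\top$, it is the least one, whence $\top_{\Scott(L)^\op}=\u\top$. On the $\Omega$-side, an object of $\OmShr{A}(L,\cat{C})$ is an $\Omega$-sheaf $F\colon L^\op\to\cat{C}$ with an isomorphism $A\cong F(\top_L)$. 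The crucial coincidence is that, under the order embeddings $\lambda\colon L^\op\emb\F$ and $\kappa\colon\Scott(L)\emb\F$, one has $\lambda(\top_L)=\u\top=\kappa(\{\top\})$, i.e.\ both top elements are carried to the \emph{same} object $\u\top\in\F$. Consequently, for any $F\in\cat{A}_{\mathcal{K}}=\cat{A}_{\Omega}$ the global sections of the $\K$-sheaf $\kappa^*F$ and of the $\Omega$-sheaf $\lambda^*F$ coincide, both being $F(\u\top)$.

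Next I would build the natural isomorphism. Write $\Phi\coloneqq\lambda^*\circ\Ran_{\kappa}\colon\KSh(\Scott(L)^\op,\cat{C})\to\OSh(L,\cat{C})$ for the equivalence of \cref{equiv-Om-K-sheaves} (recall that, under the hypothesis, the inclusion $\cat{A}_{\mathcal{K}}\hookrightarrow\cat{A}_{\Omega}$ of \cref{pr:inclusion-K-into-Omega} is the identity). For a $\K$-sheaf $G$ we compute $(\Phi G)(\top_L)=(\Ran_{\kappa}G)(\lambda(\top_L))=(\Ran_{\kappa}G)(\u\top)=(\kappa^*\Ran_{\kappa}G)(\top_{\Scott(L)^\op})$. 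The counit $\tilde{\epsilon}_G\colon\kappa^*\Ran_{\kappa}G\to G$ is a natural isomorphism by \cref{first-adj} in \cref{l:fixed-side-Kan}; evaluating it at $\top_{\Scott(L)^\op}$ and inverting gives an isomorphism $\tau_G\colon G(\top_{\Scott(L)^\op})\xrightarrow{\sim}(\Phi G)(\top_L)$, natural in $G$ because $\tilde{\epsilon}$ is. Denoting by $\Gamma_{\K}$ and $\Gamma_{\Omega}$ the functors sending a sheaf to its value at $\top_{\Scott(L)^\op}$, resp.\ $\top_L$, this exhibits a natural isomorphism $\tau\colon\Gamma_{\K}\Rightarrow\Gamma_{\Omega}\circ\Phi$.

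Finally I would carry out the lifting. Both $\KShr{A}$ and $\OmShr{A}$ are, by definition, the categories of pairs $(G,\phi\colon A\xrightarrow{\sim}\Gamma_{\K}G)$, resp.\ $(F,\psi\colon A\xrightarrow{\sim}\Gamma_{\Omega}F)$, with morphisms the sheaf morphisms commuting with the structure isomorphisms. The assignment $(G,\phi)\mapsto(\Phi G,\tau_G\circ\phi)$, acting on morphisms by $\Phi$, is then well defined on objects ($\tau_G\circ\phi$ is an isomorphism) and on morphisms (naturality of $\tau$ shows that the commutation condition $\Gamma_{\K}(f)\circ\phi=\phi'$ is transported to the corresponding condition for $\Phi f$). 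Since $\Phi$ is an equivalence and every component of $\tau$ and every structure map $\phi$ is invertible, this functor is fully faithful (fullness and faithfulness are inherited from $\Phi$, with the commutation constraints matched via $\tau$) and essentially surjective (pull an arbitrary $(F,\psi)$ back along a chosen isomorphism $F\cong\Phi G$). Hence it is the desired equivalence $\KShr{A}(\Scott(L)^\op,\cat{C})\simeq\OmShr{A}(L,\cat{C})$. The one genuinely new ingredient is the identification of the two global-section objects through $\lambda(\top_L)=\kappa(\{\top\})$; the lifting itself is a routine diagram chase, where the only point requiring care is keeping track of the direction of $\tau$ and of the morphism-level commutation conditions.
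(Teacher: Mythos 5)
Your proposal is correct and follows essentially the same strategy as the paper: identify the top elements via $\lambda(\top_L)=\u\top=\kappa(\{\top\})$, use the unit/counit of the Kan-extension adjunction to transport the structure isomorphism, and then lift the equivalence of \cref{equiv-Om-K-sheaves} to the representation categories by a routine fullness/faithfulness/essential-surjectivity check. The only (immaterial) difference is that you work with $\lambda^*\Ran_{\kappa}$ and the counit $\tilde{\epsilon}$ in the direction $\KShr{A}\to\OmShr{A}$, whereas the paper uses the quasi-inverse $\kappa^*\Lan_{\lambda}$ and the unit $\eta$ in the opposite direction.
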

\begin{proof}
Fix an arbitrary object $A$ of $\cat{C}$ and an $\Omega$-sheaf representation $(F,\phi)$ of $A$ over $L$. The top element of $\Scott(L)^\op$ is the filter $\{\top\}$, where $\top$ is the top element of $L$ (note that $\{\top\}$ is Scott-open because $\top \ll \top$ in a stably continuous lattice). We have
\[
\lambda^* \Lan_{\lambda} F (\top) = \Lan_{\lambda} F (\{\top\}) = \kappa^* \Lan_{\lambda} F (\{\top\})
\]
and so, by \cref{second-adj} in \cref{l:fixed-side-Kan}, we obtain an isomorphism $(\eta_F)_{\top}$ from $F(\top)$ to $\kappa^* \Lan_{\lambda} F (\{\top\})$.
The composite arrow
\[\begin{tikzcd}
\phi^*\colon A \arrow{r}{\phi} & F(\top) \arrow{r}{(\eta_F)_{\top}} & \kappa^* \Lan_{\lambda} F (\{\top\})
\end{tikzcd}\] 
is then an isomorphism and the pair $(\kappa^* \Lan_{\lambda} F, \phi^*)$ is a $\K$-sheaf representation of $A$ over $\Scott(L)^\op$. Further, if $\alpha\colon(F,\phi)\to (G,\psi)$ is a morphism in $\OmShr{A}(P,\cat{C})$ then 
\[
\kappa^* \Lan_{\lambda}(\alpha)\colon (\kappa^* \Lan_{\lambda} F, \phi^*)\to (\kappa^* \Lan_{\lambda} G, \psi^*)
\] 
is a morphism in $\KShr{A}(\Scott(L)^\op,\cat{C})$. Just observe that the square in the following diagram commutes by naturality of $\eta$,
\[\begin{tikzcd}[row sep = 1em, column sep = 3em]
{} & F(\top) \arrow{r}{(\eta_F)_{\top}} \arrow{dd}[swap]{\alpha_{\top}} & \kappa^* \Lan_{\lambda} F (\{\top\}) \arrow{dd}{(\kappa^* \Lan_{\lambda}(\alpha))_{\{\top\}}} \\
A \arrow{ur}{\phi} \arrow{dr}[swap]{\psi} & & \\
{} & G(\top) \arrow{r}{(\eta_G)_{\top}} & \kappa^* \Lan_{\lambda} G(\{\top\})
\end{tikzcd}\]
and so $(\kappa^* \Lan_{\lambda}(\alpha))_{\{\top\}} \circ \phi^* = \psi^*$.

This yields a functor 
\begin{equation}\label{eq:equiv-sheaf-repr}
\OmShr{A}(L,\cat{C})\to \KShr{A}(\Scott(L)^\op,\cat{C}).
\end{equation}
We claim that the latter is an equivalence of categories.
It follows from (the proof of) \cref{equiv-Om-K-sheaves} that $\kappa^* \Lan_{\lambda}\colon \OSh(L,\cat{C}) \to \KSh(\Scott(L)^\op,\cat{C})$ is an equivalence. 
Hence the functor in \cref{eq:equiv-sheaf-repr} is faithful. To see that it is full, fix an arbitrary morphism 
\[
\beta\colon (\kappa^* \Lan_{\lambda} F, \phi^*)\to (\kappa^* \Lan_{\lambda} G, \psi^*)
\] 
in $\KShr{A}(\Scott(L)^\op,\cat{C})$. Since $\kappa^* \Lan_{\lambda}$ is full, there is $\alpha\colon F\Rightarrow G$ such that ${\kappa^* \Lan_{\lambda}(\alpha)= \beta}$. 
Note that
\begin{align*}
(\eta_G)_{\top} \circ \alpha_{\top} \circ \phi &= (\kappa^* \Lan_{\lambda}(\alpha))_{\{\top\}} \circ (\eta_F)_{\top} \circ \phi \tag*{Naturality of $\eta$} \\
&= \beta_{\{\top\}} \circ \phi^* \\
&= \psi^* \\
&= (\eta_G)_{\top} \circ \psi
\end{align*}
and so $ \alpha_{\top} \circ \phi = \psi$ because $(\eta_G)_{\top}$ is an isomorphism. It follows that $\alpha$ is a morphism of $\Omega$-sheaf representations and thus the functor in \cref{eq:equiv-sheaf-repr} is full. Finally, fix an arbitrary object $(G,\psi)\in \KShr{A}(\Scott(L)^\op,\cat{C})$. Because $\kappa^* \Lan_{\lambda}$ is essentially surjective, there exist an $\Omega$-sheaf $F\in \OSh(L,\cat{C})$ and a natural isomorphism $\delta\colon \kappa^* \Lan_{\lambda} F \Rightarrow G$. Let 
\[
\phi\coloneqq (\delta_{\{\top\}}\circ (\eta_F)_{\top})^{-1}\circ\psi\colon A \to F(\top).
\] 
Then $(F,\phi)$ is an object of $\OmShr{A}(L,\cat{C})$ whose image under the functor in \cref{eq:equiv-sheaf-repr} is isomorphic to $(G,\psi)$. We conclude that the latter functor is an equivalence of categories.
\end{proof}

The classical notion of soft sheaf can be extended to arbitrary $\Omega$-sheaves:
\begin{definition}\label{def:soft-Omega-sheaf}
Let $P$ be a complete lattice and let $\cat{D}$ be a category admitting directed colimits. An $\Omega$-sheaf $F\colon P^\op\to\cat{D}$ is \emph{soft} if, for all Scott-open filters $k\in\Scott(P)$, the canonical colimit arrow 
\[
F(\top) \to \colim_{x\in k}{F(x)}
\]
is a regular epimorphism.

An $\Omega$-sheaf representation $(F,\phi)$ is said to be \emph{soft} if $F$ is a soft $\Omega$-sheaf. The full subcategory of $\OmShr{A}(P,\cat{D})$ defined by the soft $\Omega$-sheaf representations is denoted by 
\[
\sOmShr{A}(P,\cat{D}).
\]
\end{definition}

\begin{remark}
	In the case where $P=\Omega(X)$ for a sober space $X$, and $\cat{D}$ is $\Set$---or, more generally, any variety of finitary algebras---the Hofmann--Mislove theorem implies that the notion of softness introduced in \cref{def:soft-Omega-sheaf} coincides with the ordinary one (always with the \emph{caveat} outlined in \cref{rem:Omega-sheaves-vs-sheaves-caveat}). Cf.\ e.g.\ \cite[Remark~3.3]{GehrkeGool2018}.
\end{remark}

\begin{remark}\label{rem:soft-sh-repr-stalks-as-quotients}
Let $\cat{D}$ be $\Set$ or, more generally, any variety of algebras. Fix an object $A\in \cat{D}$ and assume that $F\colon \Omega(X)^\op\to\cat{D}$ is a soft $\Omega$-sheaf representation of $A$ over a sober space $X$. Reasoning as in \cref{rem:K-sheaf-encodes-stalks}, we see that for all $x\in X$ the canonical arrow
\[
A \to \colim_{x\in U}{F(U)}
\]
whose codomain is the stalk of $F$ at $x$ is a regular epimorphism. This exhibits each stalk of $F$ as a quotient of $A$.
\end{remark}

\begin{lemma}\label{lem:soft-soft}
Suppose that directed colimits in $\cat{C}$ commute with finite limits.
The following statements are equivalent for all functors $F\colon L^\op \to\cat{C}$:
\begin{enumerate}
\item $F$ is a soft $\Omega$-sheaf.
\item $\kappa^*\Lan_{\lambda} F$ is a soft $\K$-sheaf.
\end{enumerate}
\end{lemma}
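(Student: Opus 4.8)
The plan is to reduce the statement to a direct comparison of the two softness conditions, after observing that the underlying sheaf conditions already match. By \cref{equiv-Om-K-sheaves} the assignment $F \mapsto \kappa^*\Lan_{\lambda}F$ identifies $\Omega$-sheaves on $L$ with $\K$-sheaves on $\Scott(L)^\op$, so in proving the equivalence of (1) and (2) I may treat ``being an $\Omega$-sheaf'' and ``$\kappa^*\Lan_{\lambda}F$ being a $\K$-sheaf'' as interchangeable hypotheses and concentrate entirely on transferring the regular-epimorphism requirement. The engine of the argument is the explicit formula of \cref{r:explicitly}: writing $G \coloneqq \kappa^*\Lan_{\lambda}F$, for every $k \in \Scott(L)$ one has $G(k) \cong \colim_{x \in k}F(x)$, the directed colimit of the restriction of $F$ to $k \subseteq L$.

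First I would pin down the distinguished values and restriction maps. The top element of the complete lattice $\Scott(L)^\op$ is the principal filter $\{\top\}$, which is Scott-open precisely because $\top \ll \top$ in the stably continuous lattice $L$; evaluating the colimit formula on this one-element filter gives $G(\{\top\}) \cong F(\top)$. For an arbitrary $k \in \Scott(L)$, the relation $\{\top\} \leq k$ in $\Scott(L)^\op$ induces a restriction $\ros{G}{\{\top\}}{k} \colon G(\{\top\}) \to G(k)$, and under the identifications above this is exactly the canonical cocone component $F(\top) \to \colim_{x \in k}F(x)$ at the index $\top \in k$, that is, the canonical colimit arrow appearing in \cref{def:soft-Omega-sheaf}. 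By \cref{l:soft-weakening}, softness of the $\K$-sheaf $G$ over $\Scott(L)^\op$ is equivalent to requiring that every map $\ros{G}{\{\top\}}{k}$ out of the top object be a regular epimorphism. Comparing with \cref{def:soft-Omega-sheaf}, this is word for word the requirement that $F$ be a soft $\Omega$-sheaf, the indexing set $\Scott(L) = \Scott(P)$ (with $P = L$) being literally the same on both sides.

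Thus, once the identifications are in place, the two softness conditions reduce to the same family of regular-epimorphism requirements indexed by the Scott-open filters $k$, and the equivalence of (1) and (2) follows at once. I expect the only delicate points to be bookkeeping rather than substance: keeping track of the order reversal in $\Scott(L)^\op$ so that $\{\top\}$---and not the improper filter $L$---is recognised as the top element, verifying that $\{\top\}$ is genuinely Scott-open via $\top \ll \top$, and confirming that $\ros{G}{\{\top\}}{k}$ really is the canonical colimit arrow of \cref{def:soft-Omega-sheaf} and not some other comparison morphism. The sheaf-condition half of each implication is not reproved here but inherited from \cref{equiv-Om-K-sheaves}, and the reduction via \cref{l:soft-weakening} is what makes the match exact, since it lets one test softness using only the morphisms emanating from the top.
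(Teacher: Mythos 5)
Your proposal is correct and follows essentially the same route as the paper's proof: reduce to the softness comparison via the $\Omega$-sheaf/$\K$-sheaf equivalence, then use the pointwise left Kan extension formula of \cref{r:explicitly} to identify $\ros{\kappa^*\Lan_{\lambda}F}{\{\top\}}{k}$ with the canonical colimit arrow $F(\top)\to\colim_{x\in k}F(x)$, so that the two softness conditions coincide over the same index set $\Scott(L)$. The bookkeeping points you flag (that $\{\top\}$ is the top of $\Scott(L)^\op$ and is Scott-open since $\top\ll\top$) are exactly the ones the paper records.
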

\begin{proof}
Fix an arbitrary functor $F\colon L^\op \to\cat{C}$. In view of \cref{thm:omega-k-representation}, $F$ is an $\Omega$-sheaf over $L$ precisely when $\kappa^*\Lan_{\lambda} F$ is a $\K$-sheaf over $\Scott(L)^\op$. Thus, it suffices to show that $F$ is soft (as an $\Omega$-sheaf) if, and only if, $\kappa^*\Lan_{\lambda} F$ is soft (as a $\K$-sheaf). 

Recall that the top element of $\Scott(L)^\op$ is $\{\top\}$, where $\top$ is the top element of $L$. Hence, $\kappa^*\Lan_{\lambda} F$ is soft precisely when, for all $k\in\Scott(L)$, the arrow 
\[
\ros{\kappa^*\Lan_{\lambda} F}{\{\top\}}{k}\colon (\kappa^*\Lan_{\lambda} F)(\{\top\}) \to (\kappa^*\Lan_{\lambda} F)(k)
\]
is a regular epimorphism. Moreover, recall from \cref{r:explicitly} that, for all $l\in\Scott(L)$, $(\kappa^*\Lan_{\lambda} F)(l) \cong \colim_{x\in l}{F(x)}$. Under this isomorphism, $\ros{\kappa^*\Lan_{\lambda} F}{\{\top\}}{k}$ can be identified with the canonical colimit arrow
\[
F(\top) \to \colim_{x\in k}{F(x)}.
\]
Therefore, $\kappa^*\Lan_{\lambda} F$ is soft if, and only if, so is $F$.
\end{proof}

\begin{remark}\label{rem:one-dir-softness-preserved}
Even if direct colimits in $\cat{C}$ fail to commute with finite limits, by \cref{pr:inclusion-K-into-Omega} there is a full and faithful functor $\KSh(\Scott(L)^\op,\cat{C}) \hookrightarrow \OSh(L,\cat{C})$. The second part of the proof of \cref{lem:soft-soft}, combined with \cref{p:directed-sheaves}, then shows that this functor sends soft $\K$-sheaves to soft $\Omega$-sheaves.
\end{remark}

Combining the previous observations, we obtain an equivalence between soft $\K$-sheaf representations and soft $\Omega$-sheaf representations:
\begin{proposition}\label{pr:soft-sheaf-repr-equiv}
If directed colimits in $\cat{C}$ commute with finite limits, then for any $A\in \cat{C}$ there is an equivalence of categories $\sKShr{A}(\Scott(L)^\op,\cat{C}) \simeq \sOmShr{A}(L,\cat{C})$.
\end{proposition}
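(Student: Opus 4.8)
The plan is to observe that the equivalence of \cref{thm:omega-k-representation} restricts to the full subcategories cut out by softness, using \cref{lem:soft-soft} to match the softness condition on the two sides. Concretely, recall from (the proof of) \cref{thm:omega-k-representation} that the functor
\[
\Psi\colon \OmShr{A}(L,\cat{C})\to \KShr{A}(\Scott(L)^\op,\cat{C}), \quad (F,\phi)\mapsto (\kappa^* \Lan_{\lambda} F, \phi^*)
\]
is an equivalence of categories. Since $\sOmShr{A}(L,\cat{C})$ and $\sKShr{A}(\Scott(L)^\op,\cat{C})$ are, by definition, the full subcategories of $\OmShr{A}(L,\cat{C})$ and $\KShr{A}(\Scott(L)^\op,\cat{C})$ defined by the soft representations, the crux is to check that $\Psi$ and its quasi-inverse preserve softness.

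First I would note that the underlying $\K$-sheaf of $\Psi(F,\phi)$ is precisely $\kappa^* \Lan_{\lambda} F$, so that $(F,\phi)$ is soft (i.e.\ $F$ is a soft $\Omega$-sheaf) if, and only if, $\Psi(F,\phi)$ is soft (i.e.\ $\kappa^*\Lan_{\lambda} F$ is a soft $\K$-sheaf). This is exactly the content of \cref{lem:soft-soft}, which applies since directed colimits in $\cat{C}$ commute with finite limits by hypothesis. Next I would record that softness is invariant under isomorphism of $\Omega$-sheaves (and, likewise, of $\K$-sheaves): a natural isomorphism identifies the relevant canonical colimit arrows $F(\top)\to \colim_{x\in k}F(x)$, so one is a regular epimorphism exactly when the other is. Hence both $\sOmShr{A}(L,\cat{C})$ and $\sKShr{A}(\Scott(L)^\op,\cat{C})$ are \emph{replete} full subcategories.

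Putting these together, the softness-matching above shows that $\Psi$ restricts to a functor $\sOmShr{A}(L,\cat{C})\to \sKShr{A}(\Scott(L)^\op,\cat{C})$, which is full and faithful as a restriction of the full and faithful $\Psi$ to full subcategories. For essential surjectivity I would take a soft $\K$-sheaf representation $(G,\psi)$; by essential surjectivity of $\Psi$ there is $(F,\phi)\in\OmShr{A}(L,\cat{C})$ with $\Psi(F,\phi)\cong (G,\psi)$, and since softness is isomorphism-invariant and $(G,\psi)$ is soft, so is $\Psi(F,\phi)$, whence $(F,\phi)$ is soft by the equivalence of softness. Thus $(F,\phi)\in\sOmShr{A}(L,\cat{C})$, and the restriction of $\Psi$ is essentially surjective, yielding the desired equivalence $\sKShr{A}(\Scott(L)^\op,\cat{C}) \simeq \sOmShr{A}(L,\cat{C})$. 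I do not expect a substantive obstacle: the genuine work has already been carried out in \cref{thm:omega-k-representation} and \cref{lem:soft-soft}, and this statement is simply their combination via the general principle that an equivalence restricts to replete full subcategories defined by a property that corresponds under the equivalence.
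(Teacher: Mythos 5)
Your proposal is correct and follows exactly the paper's route: the paper's proof is precisely ``by (the proof of) \cref{thm:omega-k-representation} combined with \cref{lem:soft-soft}'', i.e.\ the equivalence of \cref{thm:omega-k-representation} restricts to the soft subcategories because \cref{lem:soft-soft} matches the two softness conditions. Your additional remark that softness is isomorphism-invariant (needed for essential surjectivity of the restricted functor) is a detail the paper leaves implicit, and it is handled correctly.
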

\begin{proof}
By (the proof of) \cref{thm:omega-k-representation}, combined with \cref{lem:soft-soft}.
\end{proof}

\begin{remark}
Under the assumptions of \cref{pr:soft-sheaf-repr-equiv}, the category $\sOmShr{A}(L,\cat{C})$ of soft $\Omega$-sheaf representations of $A$ over $L$ is a (large) preorder. This follows from \cref{pr:soft-sheaf-repr-equiv}, recalling that $\sKShr{A}(\Scott(L)^\op,\cat{C})$ is a preorder by \cref{soft-thin} in \cref{l:soft-repr-thin}.
\end{remark}

We can finally state our main result, which characterises soft $\Omega$-sheaf representations for a broad class of regular categories. Recall that $L$ denotes an arbitrary stably continuous lattice, and $\cat{C}$ a bicomplete regular category.
\begin{theorem}\label{t:gen}
Suppose that directed colimits in $\cat{C}$ commute with finite limits, and let $A\in\cat{C}$. 
Let $\cat{M}$ be the (large) sub-preorder of $[\Scott(L),\RegEpi{A}]$ consisting of those maps that preserve finite infima and arbitrary suprema, and whose images consist of pairwise ker-commuting elements. There is an equivalence of categories
\[
\cat{M} \simeq \sOmShr{A}(L,\cat{C}).
\]
\end{theorem}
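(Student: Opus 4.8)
The plan is to obtain the desired equivalence by \emph{composing} two results already established in this section: the isomorphism of \cref{th:equiv-soft-sheaf-repr} between $\cat{M}$ and a category of soft $\K$-sheaf representations, and the equivalence of \cref{pr:soft-sheaf-repr-equiv} between soft $\K$-sheaf representations over $\Scott(L)^\op$ and soft $\Omega$-sheaf representations over $L$. No new construction is needed; the entire argument is a matter of instantiating and chaining these two statements.

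First I would observe that, since $L$ is a stably continuous lattice, so is $\Scott(L)$; in particular $\Scott(L)$ is a complete lattice, and hence so is its opposite $\Scott(L)^\op$. This is precisely the hypothesis required to invoke \cref{th:equiv-soft-sheaf-repr} with the complete lattice $P \coloneqq \Scott(L)^\op$. Applying that theorem yields an isomorphism of (large) preorders between $\sKShr{A}(\Scott(L)^\op,\cat{C})$ and the sub-preorder of $[(\Scott(L)^\op)^\op,\RegEpi{A}] = [\Scott(L),\RegEpi{A}]$ consisting of the maps that preserve finite infima and arbitrary suprema and whose images consist of pairwise ker-commuting elements. Here I must verify that this is exactly the preorder $\cat{M}$ of the present statement: this is purely a matter of unwinding variance, since a map $\Scott(L) \to \RegEpi{A}$ is the same thing as a map $(\Scott(L)^\op)^\op \to \RegEpi{A}$, and the finite infima and arbitrary suprema in question are those of $\Scott(L)$. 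Thus $\cat{M} \cong \sKShr{A}(\Scott(L)^\op,\cat{C})$.

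Next I would invoke \cref{pr:soft-sheaf-repr-equiv}, which applies because directed colimits in $\cat{C}$ commute with finite limits by hypothesis; it provides an equivalence $\sKShr{A}(\Scott(L)^\op,\cat{C}) \simeq \sOmShr{A}(L,\cat{C})$. Composing the isomorphism from the previous step with this equivalence gives $\cat{M} \simeq \sOmShr{A}(L,\cat{C})$, as required.

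I do not expect a genuine obstacle, since all the substantive work has already been carried out: the correspondence between order-theoretic properties of a monotone map $H$ and the sheaf conditions on $\gamma_* H$ (\cref{p:properties-H-presheaf,l:monic-prod-implies-pullback}), the $\K$-sheaf/$\Omega$-sheaf equivalence (\cref{equiv-Om-K-sheaves}), and the preservation of softness in both directions (\cref{lem:soft-soft}). The only point demanding care is the bookkeeping of variances: one must consistently track that the domain lattice is $\Scott(L)^\op$ (so that the finite infima and arbitrary suprema relevant to $\cat{M}$ are computed in $\Scott(L)$), and confirm that $\Scott(L)$ being a complete lattice is exactly what legitimises the appeal to \cref{th:equiv-soft-sheaf-repr}.
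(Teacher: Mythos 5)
Your proposal is correct and follows exactly the paper's own argument: the proof there is precisely the composition of \cref{th:equiv-soft-sheaf-repr} (applied with $P=\Scott(L)^\op$, which is legitimate since $\Scott(L)$ is a stably continuous, hence complete, lattice) with \cref{pr:soft-sheaf-repr-equiv}. Your additional care about variance and the identification of $\cat{M}$ inside $[\Scott(L),\RegEpi{A}]$ is exactly the right bookkeeping, which the paper leaves implicit.
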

\begin{proof}
By \cref{th:equiv-soft-sheaf-repr} and \cref{pr:soft-sheaf-repr-equiv}.
\end{proof}

In the same vein of \cref{cor:MAIN}, we state a consequence of \cref{t:gen} obtained by taking the poset reflections of the categories involved, assuming that $\cat{C}$ is Barr-exact. Recall that $\llbracket \sOmShr{A}(L,\cat{C}) \rrbracket$ denotes the poset reflection of $\sOmShr{A}(L,\cat{C})$; the objects of $\llbracket \sOmShr{A}(L,\cat{C}) \rrbracket$ are isomorphism classes of soft $\Omega$-sheaf representations of $A$ over $L$.

\begin{corollary}\label{cor:pos-refl-omega} 
	Suppose that $\cat{C}$ is Barr-exact and directed colimits in $\cat{C}$ commute with finite limits, and let $A\in\cat{C}$. 
	Let $\cat{N}$ be the (large) sub-poset of $[\Scott(L),\Equiv{A}]$ consisting of those maps that preserve finite infima and arbitrary suprema, and whose images consist of pairwise commuting equivalence relations. There is an order isomorphism 
\[
\cat{N}\cong\llbracket \sOmShr{A}(L,\cat{C}) \rrbracket. 
\]
\end{corollary}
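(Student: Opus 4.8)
The plan is to deduce the statement by composing two isomorphisms, running exactly parallel to the proof of \cref{cor:MAIN} but reading $\Scott(L)$ as the ``base lattice''. The crucial preliminary observation is that, since $L$ is stably continuous, so is $\Scott(L)$; in particular $\Scott(L)$ is a complete lattice, and hence so is its order-dual $P \coloneqq \Scott(L)^\op$. This is what allows the black-box results for a general complete lattice to be applied.

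First I would instantiate \cref{cor:MAIN} at the complete lattice $P = \Scott(L)^\op$. As $\cat{C}$ is assumed Barr-exact, \cref{cor:MAIN} provides an order isomorphism between $\llbracket \sKShr{A}(\Scott(L)^\op,\cat{C}) \rrbracket$ and the sub-poset of $[(\Scott(L)^\op)^\op,\Equiv{A}] = [\Scott(L),\Equiv{A}]$ consisting of the maps preserving finite infima and arbitrary suprema whose images are pairwise commuting equivalence relations. This sub-poset is, by definition, precisely $\cat{N}$. Hence $\cat{N} \cong \llbracket \sKShr{A}(\Scott(L)^\op,\cat{C}) \rrbracket$.

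Second, I would invoke \cref{pr:soft-sheaf-repr-equiv}, whose hypothesis that directed colimits in $\cat{C}$ commute with finite limits is part of the present assumptions. It yields an equivalence of categories $\sKShr{A}(\Scott(L)^\op,\cat{C}) \simeq \sOmShr{A}(L,\cat{C})$. Both sides are (large) preorders, by \cref{soft-thin} in \cref{l:soft-repr-thin} and the remark following \cref{pr:soft-sheaf-repr-equiv}; since an equivalence of preorders is fully faithful and essentially surjective, it induces a bijection on isomorphism classes that preserves and reflects the order, and therefore descends to an order isomorphism of poset reflections. Thus $\llbracket \sKShr{A}(\Scott(L)^\op,\cat{C}) \rrbracket \cong \llbracket \sOmShr{A}(L,\cat{C}) \rrbracket$. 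Composing the two isomorphisms gives $\cat{N} \cong \llbracket \sOmShr{A}(L,\cat{C}) \rrbracket$, as required.

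The argument is essentially formal, so there is no serious obstacle; the only points requiring care are bookkeeping. In the first step one must verify that the sub-poset of $[\Scott(L),\Equiv{A}]$ produced by \cref{cor:MAIN} applied to $P = \Scott(L)^\op$ coincides verbatim with the poset $\cat{N}$ of the statement, and that $\Scott(L)^\op$ genuinely meets the completeness hypothesis of \cref{cor:MAIN}. A more self-contained alternative would sidestep \cref{cor:MAIN} and instead pass to poset reflections directly in \cref{t:gen}, reproducing the final paragraph of the proof of \cref{cor:MAIN}: using that in a Barr-exact category the map $\ker\colon (\Quo{A})^\op\to \Equiv{A}$ is an order isomorphism carrying ker-commuting pairs to commuting pairs, one identifies $\llbracket \cat{M} \rrbracket$ with $\cat{N}$ and combines this with the isomorphism $\llbracket \cat{M} \rrbracket \cong \llbracket \sOmShr{A}(L,\cat{C}) \rrbracket$ coming from \cref{t:gen}.
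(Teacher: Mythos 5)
Your proposal is correct and follows essentially the same route as the paper: the paper's own proof is precisely your ``self-contained alternative'' (apply \cref{t:gen}, pass to poset reflections, and identify $\llbracket \cat{M} \rrbracket$ with $\cat{N}$ via the argument from the proof of \cref{cor:MAIN}). Your primary route merely rebrackets the same two ingredients---composing \cref{cor:MAIN} at $P=\Scott(L)^\op$ with \cref{pr:soft-sheaf-repr-equiv} instead of composing \cref{th:equiv-soft-sheaf-repr} with \cref{pr:soft-sheaf-repr-equiv} first---and all the bookkeeping you flag (completeness of $\Scott(L)^\op$, the identification of the sub-posets) checks out.
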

\begin{proof}
Note that two (large) posets that are equivalent as categories must be order isomorphic. Thus, by \cref{t:gen}, there is an order isomorphism $\llbracket \cat{M} \rrbracket \cong \llbracket \sOmShr{A}(L,\cat{C}) \rrbracket$. In turn, as pointed out in the proof of \cref{cor:MAIN}, $\llbracket \cat{M} \rrbracket \cong \cat{N}$.
\end{proof}

\Cref{t:gen,cor:pos-refl-omega} are generalisations of \cite[Theorem~3.10]{GehrkeGool2018} from the framework of varieties of finitary algebras to that of regular and Barr-exact categories, respectively. We end this section with some remarks concerning the previous results.
	
\begin{remark}\label{rem:well-powered-sheaf-rep}
As with \cref{cor:MAIN}, if $\cat{C}$ is a well-powered category then all posets in the statement of \cref{cor:pos-refl-omega} are small (cf.\ \cref{rem:well-powered}).
\end{remark}

\begin{remark}\label{rem:M-into-Omega-shvs}
Even when directed colimits in $\cat{C}$ do not commute with finite limits, and so \cref{t:gen} does not apply, by \cref{pr:inclusion-K-into-Omega} there is a full and faithful functor 
\[
\KSh(\Scott(L)^\op,\cat{C}) \hookrightarrow \OSh(L,\cat{C}).
\]
The latter induces, for every object $A$ of $\cat{C}$, a full and faithful functor 
\[
\sKShr{A}(\Scott(L)^\op,\cat{C}) \hookrightarrow \sOmShr{A}(L,\cat{C}). 
\]
Cf.~\cref{rem:one-dir-softness-preserved}.
Hence, in view of \cref{th:equiv-soft-sheaf-repr}, there is an order embedding 
\[
\cat{M} \hookrightarrow \sOmShr{A}(L,\cat{C}).
\]

In particular, every map $\Scott(L)\to\RegEpi{A}$ that preserves finite infima and arbitrary suprema, and whose image consists of pairwise ker-commuting elements, induces a soft $\Omega$-sheaf representation of $A$ over $L$. In a similar fashion, if $\cat{C}$ is Barr-exact and there exists a map $\Scott(L)\to \Equiv{A}$ that preserves finite infima and arbitrary suprema, and whose image consists of pairwise commuting equivalence relations, then $A$ admits a soft $\Omega$-sheaf representation over $L$.
\end{remark}

\begin{remark}
Throughout this paper we have adopted an ``external'' perspective, whereby an object of a regular category $\cat{C}$ is studied via (pre)sheaves $L^\op\to \cat{C}$. However it is well known that, in many situations, the appropriate notion of sheaf of $\cat{C}$-objects is given by an ``internal $\cat{C}$-object'' in the category of sheaves of sets. Whereas the internal and external perspectives coincide when $\cat{C}$ is the category of models of a finitary algebraic theory (i.e., $\cat{C}$ is a variety of finitary algebras), they may differ for arbitrary first-order theories---consider e.g.\ the elementary theory of fields.\footnote{This is due to the fact that the global section functor does not preserve the validity of all first-order sentences, but only of \emph{Cartesian theories}, cf.\ e.g.\ \cite[\S V.1.12]{Johnstone1986}.} We do not know if, in general, our results can be adapted to the internal perspective---whenever the latter is available.
\end{remark}

\section{Examples}\label{s:examples}

\subsection{The dual of compact ordered spaces}\label{s:CompOrd}
A \emph{compact ordered space} is a compact space $X$ equipped with a partial order that is closed in the product topology of $X\times X$. Compact ordered spaces were first introduced by Nachbin in his monograph~\cite{Nachbin}.

Let $\CompOrd$ denote the category of compact ordered spaces and continuous monotone maps between them. For any object $X$ of $\CompOrd$ we write $X_*$ for the same object, but this time regarded as an object of the opposite category $\CompOrd^\op$. In this subsection, we shall investigate soft sheaf representations of objects of $\CompOrd^\op$.

Recall that regular monomorphisms in $\CompOrd$ can be identified, up to isomorphism, with the closed subsets with the induced order \cite[Theorem~2.6]{HofmannNevesEtAl2018}.
Moreover, it follows from the main result of~\cite{Abbadini2019} that $\CompOrd^\op$ is a Barr-exact category (see also~\cite{AbbadiniReggio2020} for a direct proof).
Thus, for any compact ordered space $X$, there is an order isomorphism between $(\Equiv{X_*})^\op$ and the coframe of closed subsets of $X$. Equivalently, \[\Equiv{X_*}\cong \Omega(X).\]

For the next lemma recall that, by \cref{cor:QuoA-bounded-lattice} and \cref{rem:Barr-exact-ker-iso}, for any object $A$ of a Barr-exact category admitting pushouts, the poset $\Equiv{A}$ is a (bounded) lattice. Further, given $\theta\in\Equiv{A}$, we write \[A \repi A/{\theta}\] for its coequaliser---provided it exists. If $\theta_1\leq \theta_2$ in $\Equiv{A}$, the universal property of the coequaliser entails the existence of a unique regular epimorphism $A/{\theta_1}\repi A/{\theta_2}$ through which $A \repi A/{\theta_2}$ factors.

\begin{lemma} \label{l:abstract-characterisation}
Let $A$ be an object of a finitely cocomplete Barr-exact category, and let $\theta_1,\theta_2\in\Equiv{A}$. Then $\theta_1$ and $\theta_2$ commute if, and only if, the following diagram (whose arrows are the canonical ones) is a pullback.
	\begin{equation} \label{e:square-pb}
		\begin{tikzcd}
			A/{(\theta_1 \wedge \theta_2)} \arrow[two heads]{r}{} \arrow[two heads]{d}{} & A/{\theta_1} \arrow[two heads]{d}{}\\
			A/{\theta_2} \arrow[two heads]{r}{} & A/{(\theta_1 \vee \theta_2)}
		\end{tikzcd}
	\end{equation}
\end{lemma}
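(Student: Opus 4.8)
The plan is to translate the statement into the language of the preorder $\RegEpi{A}$ and then apply the regular-pushout criterion of \cref{l:commuting}. Write $q_i \colon A \repi A/\theta_i$ for the canonical quotients, so that $\ker q_i = \theta_i$ by Barr-exactness, and let $p \colon A \repi A/(\theta_1 \wedge \theta_2)$ and $A \repi A/(\theta_1 \vee \theta_2)$ be the quotients by the meet and join (these exist because $\cat{C}$ is finitely cocomplete, so by \cref{cor:QuoA-bounded-lattice} and \cref{rem:Barr-exact-ker-iso} the poset $\Equiv{A}$ is a lattice). First I would identify the bottom-right cospan of \cref{e:square-pb} as a pushout: since $\cat{C}$ is finitely cocomplete the pushout of $q_1$ along $q_2$ exists, and by \cref{l:binary-infima} the composite $\eta_1 \circ q_1 = \eta_2 \circ q_2$ is a supremum of $q_1$ and $q_2$ in $\RegEpi{A}$. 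Under the order isomorphism $\ker \colon (\Quo{A})^\op \to \Equiv{A}$ of \cref{rem:Barr-exact-ker-iso}, this supremum corresponds to $\theta_1 \vee \theta_2$, so its codomain is $A/(\theta_1 \vee \theta_2)$ and the pushout injections are forced (as $q_1, q_2$ are epic) to be the canonical projections $A/\theta_i \repi A/(\theta_1 \vee \theta_2)$.

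Next I would form the pullback $Q$ of the cospan $A/\theta_1 \to A/(\theta_1\vee\theta_2) \leftarrow A/\theta_2$ and consider two comparison maps into it: the mediating morphism $c \colon A \to Q$ induced by $q_1$ and $q_2$ (the one appearing in the definition of a regular pushout), and the comparison $d \colon A/(\theta_1 \wedge \theta_2) \to Q$ induced by the two canonical projections out of $A/(\theta_1\wedge\theta_2)$. By construction $c = d \circ p$. Two observations then drive the argument. First, \cref{e:square-pb} is a pullback precisely when $d$ is an isomorphism. Second, since $Q$ embeds into $A/\theta_1 \times A/\theta_2$ and a mono does not alter kernel pairs, the kernel pair of $c$ equals that of $\langle q_1,q_2\rangle$, namely $\ker q_1 \wedge \ker q_2 = \theta_1 \wedge \theta_2$; crucially this holds unconditionally, with no commutativity hypothesis.

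For the equivalence itself: by \cref{rem:Barr-exact-ker-iso} the relations $\theta_1, \theta_2$ commute if and only if $q_1, q_2$ ker-commute, which by \cref{l:commuting} holds if and only if the pushout above is a regular pushout, i.e.\ if and only if $c$ is a regular epimorphism. If \cref{e:square-pb} is a pullback then $d$ is an isomorphism, so $c = d \circ p$ is a composite of regular epimorphisms and hence a regular epimorphism by \cref{composition} in \cref{regular-properties}; thus $\theta_1$ and $\theta_2$ commute. Conversely, if they commute then $c$ is a regular epimorphism whose kernel pair is $\theta_1 \wedge \theta_2$; since in a regular category every regular epimorphism is the coequaliser of its own kernel pair, both $c$ and $p$ are coequalisers of $\theta_1 \wedge \theta_2$, whence the unique factorisation $d$ satisfying $d \circ p = c$ is an isomorphism, and \cref{e:square-pb} is a pullback.

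The main obstacle I anticipate is the upgrade, in the converse direction, from ``$c$ is a regular epimorphism'' to ``$d$ is an isomorphism'': the regular-pushout criterion only delivers surjectivity of the comparison $c$ out of the corner $A$, and one must exploit the unconditional computation $\ker c = \theta_1 \wedge \theta_2$ together with the uniqueness of coequalisers to conclude that it is $d$, and not merely $c$, that is invertible. (Alternatively, $d$ is a regular epimorphism by \cref{cancellation} in \cref{regular-properties} applied to $c = d \circ p$, and one then checks separately that $d$ is monic, again using $\ker c = \ker p$.)
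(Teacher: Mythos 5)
Your proof is correct and follows essentially the same route as the paper's: both identify the bottom-right cospan as the pushout of $q_1$ and $q_2$ via \cref{l:binary-infima}, reduce commutativity to the regular-pushout criterion of \cref{l:commuting}, and then identify the mediating morphism $A \to Q$ with the quotient $A \repi A/(\theta_1 \wedge \theta_2)$. The only (immaterial) difference is that the paper makes this last identification by recognising a regular-epi $A \to Q$ as the infimum of $q_1,q_2$ in $\RegEpi{A}$ via the (regular epi, mono) factorisation of $\langle q_1,q_2\rangle$, whereas you compute $\ker c = \theta_1 \wedge \theta_2$ and invoke uniqueness of the coequaliser of a kernel pair.
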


\begin{proof}
The square on the left-hand side below is a pushout by~\cref{l:binary-infima}, where $f$ and $g$ are the coequalisers of $\theta_1$ and $\theta_2$, respectively, and $\eta_1,\eta_2$ are the canonical arrows induced by the universal property of $f$ and $g$, respectively.
\begin{center}
\begin{tikzcd}
		A \arrow[two heads]{r}{f} \arrow[two heads]{d}[swap]{g} & A/{\theta_1} \arrow[two heads]{d}{\eta_1}\\
		A/{\theta_2} \arrow[two heads]{r}{\eta_2} & A/{(\theta_1 \vee \theta_2)} \arrow[ul, phantom, "\ulcorner", very near start]
\end{tikzcd}
\ \ \ \ \ \ \ \ \
\begin{tikzcd}
		A \arrow[bend left = 30, looseness=1, two heads]{rrd}[description]{f} \arrow[swap, bend right = 30, looseness=1, two heads]{ddr}[description]{g}\arrow[dashed]{rd}[description]{q} && \\
		& P \arrow[dr, phantom, "\lrcorner", very near start] \arrow[twoheadrightarrow]{r}{} \arrow[twoheadrightarrow]{d}[swap]{} & A/\theta_1 \arrow[twoheadrightarrow]{d}{\eta_1}\\
		&A/\theta_2 \arrow[twoheadrightarrow]{r}{\eta_2}&A/(\theta_1 \vee \theta_2)
\end{tikzcd}	
\end{center}
Let $P$ be the pullback of $\eta_1$ along $\eta_2$, and let $q \colon A \to P$ denote the unique arrow making the right-hand diagram above commute.
	By~\cref{l:commuting}, $\theta_1$ and $\theta_2$ commute precisely when $q$ is a regular epimorphism.
	
Now, if $q$ is a regular epimorphism then it is an infimum of $f$ and $g$ in $\RegEpi{A}$; so, up to an isomorphism, it coincides with the coequaliser $A \repi A/(\theta_1 \wedge \theta_2)$. Thus, \eqref{e:square-pb} is a pullback square.
Conversely, suppose \eqref{e:square-pb} is a pullback.
Then $q$ is a coequaliser of $\theta_1\wedge \theta_2$, hence a regular epimorphism.
\end{proof}

In \cite[Lemma~5.4]{GehrkeGool2018}, Gehrke and van Gool characterised the pairs of commuting congruences on a bounded distributive lattice $A$ in terms of the corresponding closed subsets of the dual \emph{Priestley space} of $A$~\cite{Priestley1970}. The next result extends their characterisation from Priestley spaces to compact ordered spaces.

\begin{proposition}\label{p:char-commuting}
Let $X$ be a compact ordered space. Let $\theta_1,\theta_2\in \Equiv{X_*}$ and let $C_1,C_2$ be the corresponding closed subsets of $X$. The following statements are equivalent:
	\begin{enumerate}
		\item \label{i:def-commute}
		The equivalence relations $\theta_1$ and $\theta_2$ commute.
		
		\item \label{i:char-commute}
		For any $x_1 \in C_1$, $x_2 \in C_2$, if $\{i,j\} = \{1,2\}$ and $x_i \leq x_j$, there exists $z \in C_1 \cap C_2$ such that $x_i \leq z \leq x_j$.
	\end{enumerate}
\end{proposition}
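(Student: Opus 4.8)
The plan is to reduce the statement, via \cref{l:abstract-characterisation} and the identification $\Equiv{X_*}\cong\Omega(X)$, to a concrete pushout computation in $\CompOrd$. Under this isomorphism an equivalence relation $\theta_i$ corresponds to a closed subset $C_i\subseteq X$, in such a way that the canonical quotient $X_*\repi X_*/\theta_i$ is carried in $\CompOrd$ to the closed embedding $C_i\emb X$; since the isomorphism sends meets and joins in $\Equiv{X_*}$ to intersections and unions of opens, taking complements yields $X_*/(\theta_1\wedge\theta_2)\leftrightarrow C_1\cup C_2$ and $X_*/(\theta_1\vee\theta_2)\leftrightarrow C_1\cap C_2$. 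Dualising the square of \cref{l:abstract-characterisation}, I obtain that $\theta_1,\theta_2$ commute if and only if the square of closed embeddings
\[
\begin{tikzcd}
C_1\cap C_2 \arrow[rightarrowtail]{r} \arrow[rightarrowtail]{d} & C_1 \arrow[rightarrowtail]{d}\\
C_2 \arrow[rightarrowtail]{r} & C_1\cup C_2
\end{tikzcd}
\]
(all maps inclusions, with $C_1\cup C_2$ carrying the order induced by $X$) is a \emph{pushout} in $\CompOrd$.

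Next I would describe the candidate pushout order explicitly. Let $R$ be the reflexive relation on $C_1\cup C_2$ obtained by joining the two induced orders, i.e.\ $R\coloneqq({\le}\cap(C_1\times C_1))\cup({\le}\cap(C_2\times C_2))$, and let $R^t$ be its transitive closure. Since $R\subseteq{\le}$ and $\le$ is antisymmetric, $R^t$ is a partial order contained in $\le$. The key technical step, which I expect to be the main obstacle, is to show that the transitive closure stabilises after two steps, namely $R^t=R\circ R$. I would prove this by a path-shortening argument: given $(x,a),(a,b),(b,y)\in R$, a case distinction on which of $C_1,C_2$ contains each consecutive pair—using that a colour change forces the intermediate point into $C_1\cap C_2$, and that two $\le$-comparable points of $C_1\cap C_2$ are comparable inside a single $C_i$—collapses the length-three chain to a length-two one. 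Since $R$ is a closed relation on the compact space $C_1\cup C_2$, the composite $R\circ R$ is the projection of a closed subset of $(C_1\cup C_2)^3$, hence closed; thus $R^t=R\circ R$ is closed and $(C_1\cup C_2,R^t)$ is a genuine compact ordered space, with the inclusions $C_i\emb(C_1\cup C_2,R^t)$ monotone.

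With $R^t$ in hand I would avoid verifying the universal property from scratch and argue directly. Any cocone $f_1\colon C_1\to T$, $f_2\colon C_2\to T$ in $\CompOrd$ agreeing on $C_1\cap C_2$ determines a unique set map $g\colon C_1\cup C_2\to T$, which is continuous by the pasting lemma (both $C_i$ being closed) and automatically $R^t$-monotone, as it preserves each $R$-step. Hence $(C_1\cup C_2,\le)$ with the inclusions is the pushout precisely when every such $g$ is also $\le$-monotone, equivalently when $R^t={\le}$ on $C_1\cup C_2$. For the forward direction one tests the universal property against the cocone $T=(C_1\cup C_2,R^t)$ with the inclusions: the mediating morphism $(C_1\cup C_2,\le)\to(C_1\cup C_2,R^t)$ is then the identity on underlying sets and must be monotone, forcing ${\le}\subseteq R^t$; the reverse direction is immediate.

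Finally I would match $R^t={\le}$ with statement~\ref{i:char-commute}. As $R^t=R\circ R\subseteq{\le}$ always holds, only ${\le}\subseteq R\circ R$ is in question. Unwinding $R\circ R$, for $x\le y$ in $C_1\cup C_2$ membership in $R\circ R$ is the existence of $z$ with $x\le z\le y$ lying in a common $C_i$ with $x$ and in a common $C_j$ with $y$; when $x\in C_i\setminus C_j$ and $y\in C_j\setminus C_i$ this forces $z\in C_1\cap C_2$, which is exactly~\ref{i:char-commute}, whereas the cases in which $x$ or $y$ already lies in $C_1\cap C_2$ are witnessed by $z\in\{x,y\}$ and hold automatically. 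Carrying out this bookkeeping in both directions establishes the equivalence of~\ref{i:def-commute} and~\ref{i:char-commute}.
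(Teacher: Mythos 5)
Your proposal is correct and follows essentially the same route as the paper: reduce via \cref{l:abstract-characterisation} and the isomorphism $\Equiv{X_*}\cong\Omega(X)$ to the question of whether the square of closed embeddings with vertex $C_1\cup C_2$ is a pushout in $\CompOrd$, and then identify that pushout condition with the interpolation property~\ref{i:char-commute}. The only difference is that the paper delegates this last equivalence to a citation (\cite[Remark~6]{AbbadiniReggio2020}), whereas you prove it directly via the transitive closure $R^t=R\circ R$; your stabilisation, closedness, and universal-property arguments all check out, so this simply makes the proof self-contained.
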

\begin{proof}
	The category $\CompOrd^\op$ is cocomplete and Barr-exact (cf.~\cite{AbbadiniReggio2020}), thus \cref{l:abstract-characterisation} entails that $\theta_1$ and $\theta_2$ commute if and only if the following is a pushout in $\CompOrd$.
	\begin{equation*}
		\begin{tikzcd}
			C_1 \cap C_2 \arrow[hookrightarrow]{r}{} \arrow[hookrightarrow]{d}{} & C_1 \arrow[hookrightarrow]{d}{} \\
			C_2 \arrow[swap, hookrightarrow]{r}{} & C_1 \cup C_2
		\end{tikzcd}
	\end{equation*}
	In turn, this is equivalent to the condition in \cref{i:char-commute}. Cf.\ e.g.\ \cite[Remark 6]{AbbadiniReggio2020}.
\end{proof}

If $X$ is a compact ordered space, we can consider the collection of all open subsets of $X$ that are downwards closed in the partial order of $X$. The latter forms a topology on the underlying set of $X$, and we shall denote by $X^\downarrow$ the ensuing topological space. In fact, $X^\downarrow$ is a stably compact space (cf.~\cref{ex:stably-compact}) and every stably compact space arises in this manner, cf.\ e.g.\ \cite[Proposition~2.10]{Lawson2011}. Similarly for the space $X^\uparrow$ obtained by considering the topology consisting of the open subsets of $X$ that are upwards closed. The space $X^\uparrow$ is called the \emph{co-compact dual} of $X^\downarrow$ and there is a frame isomorphism $\Omega(X^\downarrow) \cong \K(X^\uparrow)^\op$ sending an open subset of $X^\downarrow$ to its complement, cf.\ e.g.\ \cite[\S 2.2]{Jung2004}.

The following is a direct generalisation of \cite[Definition~5.5]{GehrkeGool2018}, from the setting of Priestley spaces to that of compact ordered spaces.
\begin{definition} \label{d:interpolating}
Let $X, Y$ be compact ordered spaces. An \emph{interpolating decomposition of $X$ over $Y$} is a continuous function $q \colon X \to Y^\downarrow$ such that, for all $x_1, x_2 \in X$, if $x_1 \leq x_2$ then there is $z \in X$ such that $x_1 \leq z \leq x_2$, $q(x_1) \leq q(z)$ and $q(x_2) \leq q(z)$.\footnote{The inequalities $q(x_1) \leq q(z)$ and $q(x_2) \leq q(z)$ refer to the partial order of $Y$.}
\end{definition}

If $X, Y$ are compact ordered spaces and ${q \colon X \to Y^\downarrow}$ is a continuous map, denote by 
\[
\psi_q \colon \Omega(Y^\downarrow) \to \Equiv{X_*}
\] 
the composition of the frame homomorphism $\Omega(q) \colon \Omega(Y^\downarrow) \to \Omega(X)$ with the order isomorphism $\Omega(X) \cong \Equiv{X_*}$. It is useful to consider the map $\phi_q$ order-dual to $\psi_q$. Since $\Omega(Y^\downarrow)^\op \cong \K(Y^\uparrow)$, we can assume that this order-dual map has type
\[
\phi_q \colon \K(Y^\uparrow) \to (\Equiv{X_*})^\op.
\]
Note that any compact ordered space is Hausdorff, hence the poset $\K(X)$ of compact saturated subsets of $X$ coincides with the coframe of its closed subsets. Thus, $\phi_q$ can be equivalently described as the composite of the inverse image map $q^{-1} \colon \K(Y^\uparrow) \to \K(X)$ with the order isomorphism $\K(X) \cong (\Equiv{X_*})^\op$.

\begin{proposition} \label{p:interpolating}
Let $X, Y$ be compact ordered spaces and let ${q \colon X \to Y^\downarrow}$ be a continuous function. The following statements are equivalent:
	\begin{enumerate}
		\item \label{i:decomp}
		The function $q$ is an interpolating decomposition of $X$ over $Y$.
		\item \label{i:commute}
		Any two equivalence relations in the image of $\psi_q \colon \Omega(Y^\downarrow) \to \Equiv{X_*}$ commute.
	\end{enumerate}
\end{proposition}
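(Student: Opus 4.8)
The plan is to reduce both conditions to the combinatorial criterion of \cref{p:char-commuting} and then read off the equivalence. First I would record the explicit description of the image of $\psi_q$ furnished by the discussion preceding the statement: an equivalence relation lies in the image of $\psi_q$ exactly when its associated closed subset of $X$ (under $\K(X)\cong(\Equiv{X_*})^\op$) has the form $q^{-1}(K)$ for some $K\in\K(Y^\uparrow)$, i.e.\ for some closed upper subset $K\subseteq Y$; indeed this is precisely the description of $\phi_q$ as $q^{-1}$ followed by $\K(X)\cong(\Equiv{X_*})^\op$. Thus two equivalence relations $\theta_1,\theta_2$ in the image correspond to closed sets $C_1=q^{-1}(K_1)$ and $C_2=q^{-1}(K_2)$ with $K_1,K_2$ closed upper sets, whence $C_1\cap C_2=q^{-1}(K_1\cap K_2)$. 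Feeding this into \cref{p:char-commuting}, commutation of $\theta_1$ and $\theta_2$ translates to the following: for all $x_1\in q^{-1}(K_1)$ and $x_2\in q^{-1}(K_2)$ with $x_i\leq x_j$ (where $\{i,j\}=\{1,2\}$), there is $z$ with $x_i\leq z\leq x_j$ and $q(z)\in K_1\cap K_2$.

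For the implication from \cref{i:decomp} to \cref{i:commute}, I would take closed upper sets $K_1,K_2$ and points $x_1\in q^{-1}(K_1)$, $x_2\in q^{-1}(K_2)$ with, say, $x_1\leq x_2$ (the reverse order is handled symmetrically). Applying \cref{d:interpolating} to this comparable pair yields $z$ with $x_1\leq z\leq x_2$, $q(x_1)\leq q(z)$ and $q(x_2)\leq q(z)$. Since $q(x_1)\in K_1$ and $q(x_2)\in K_2$ while both $K_1$ and $K_2$ are upper sets, we get $q(z)\in K_1$ and $q(z)\in K_2$; hence $z\in q^{-1}(K_1\cap K_2)=C_1\cap C_2$, which is exactly the condition extracted from \cref{p:char-commuting}. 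Therefore every pair of equivalence relations in the image of $\psi_q$ commutes.

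For the converse, from \cref{i:commute} to \cref{i:decomp}, the key idea is the choice of test closed sets. Given $x_1\leq x_2$ in $X$, I would feed into \cref{i:commute} the principal upper sets $K_1\coloneqq\u q(x_1)$ and $K_2\coloneqq\u q(x_2)$. These are legitimate members of $\K(Y^\uparrow)$: in a compact ordered space the order is closed in the product, so every principal upper set is closed (and it is upward closed by definition). By construction $x_1\in q^{-1}(K_1)$, $x_2\in q^{-1}(K_2)$ and $x_1\leq x_2$, so the criterion obtained from \cref{p:char-commuting} supplies $z$ with $x_1\leq z\leq x_2$ and $q(z)\in K_1\cap K_2$, i.e.\ $q(x_1)\leq q(z)$ and $q(x_2)\leq q(z)$. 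This is precisely the interpolating-decomposition condition of \cref{d:interpolating}.

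The main obstacle is conceptual rather than computational: once the image of $\psi_q$ is identified with the preimages $q^{-1}(K)$ of closed upper sets and \cref{p:char-commuting} is invoked, both implications are short. The one fact that must be handled with care is that principal upper sets in a compact ordered space are closed, as this is exactly what makes them available as arguments of $\psi_q$ (equivalently, as elements of $\K(Y^\uparrow)$) in the harder direction.
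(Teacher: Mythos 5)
Your proposal is correct and follows essentially the same route as the paper's proof: both directions reduce to the criterion of \cref{p:char-commuting} via the identification of the image of $\psi_q$ (equivalently $\phi_q$) with the preimages $q^{-1}(K)$ of closed upper sets, with the forward direction using upward-closedness of $K_1,K_2$ and the converse testing against the principal upper sets $\u q(x_1)$ and $\u q(x_2)$. Your explicit remark that principal upper sets are closed because the order is closed in the product topology is a point the paper leaves implicit, but it is exactly the right justification.
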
  

\begin{proof}
	We prove, equivalently, that \ref{i:decomp} holds if and only if any two elements in the image of $\phi_q$ (the map order-dual to $\psi_q$) commute.
	
	Assume \ref{i:decomp} holds and let $K_1, K_2 \in \K(Y^\uparrow)$.
	To show that $\phi_q(K_1)$ and $\phi_q(K_2)$ commute, it suffices to prove that the closed sets $C_i \coloneqq q^{-1}(K_i)$, for $i \in \{1,2\}$, satisfy the property in \cref{i:char-commute} of \cref{p:char-commuting}.
	Fix arbitrary elements $x_1\in C_1$ and $x_2 \in C_2$ such that $x_1 \leq x_2$ (if $x_2\leq x_1$ the proof is the same, mutatis mutandis). Because $q$ is an interpolating decomposition, there is $z\in X$ such that $x_1 \leq z \leq x_2$, $q(x_1) \leq q(z)$ and $q(x_2) \leq q(z)$. It remains to show that $z \in C_1 \cap C_2$.
	As $K_1$ and $K_2$ are compact saturated subsets of~$Y^\uparrow$, they are upwards closed in the order of $Y$ (cf.\ e.g.\ \cite[\S 2.2]{Jung2004}).
	Since $q(x_i) \in K_i$ for $i \in \{1,2\}$, we get $q(z) \in K_1 \cap K_2$ and so $z \in C_1 \cap C_2$.
		
	Conversely, suppose any two elements in the image of $\phi_q$ commute and let $x_1, x_2 \in X$ satisfy $x_1 \leq x_2$. Write $y_i \coloneqq q(x_i)$ and $C_i \coloneqq q^{-1}(\u y_i$) for $i \in \{1,2\}$. Clearly, $x_i \in C_i$ for $i \in \{1,2\}$. Moreover, by definition of $\phi_q$, $C_i$ is the closed subset of $X$ corresponding to the equivalence relation $\phi_q(\u y_i)$ on~$X_*$. Since $\phi_q(\u y_1)$ and $\phi_q(\u y_2)$ commute, by \cref{p:char-commuting} there is $z \in C_1 \cap C_2$ such that $x_1 \leq z \leq x_2$.
	Note that, for each $i \in \{1,2\}$, $z\in C_i$ implies that $q(x_i) \leq q(z)$.
\end{proof}

Let $X,Y$ be compact ordered spaces and let $(F, \phi)$ be a soft $\K$-sheaf representation of $X_*$ over the coframe $\K(Y^\uparrow)$. 
Under the order isomorphism in \cref{cor:MAIN}, the isomorphism class of $(F, \phi)$ corresponds to a frame homomorphism ${\K(Y^\uparrow)^\op \to \Equiv{X_*}}$. Identifying $\Equiv{X_*}$ with $\Omega(X)$, and $\K(Y^\uparrow)^\op$ with $\Omega(Y^\downarrow)$, we shall denote this frame homomorphism by $\psi_{(F, \phi)} \colon \Omega(Y^\downarrow) \to \Omega(X)$. Note that $Y^\downarrow$ and $X$ are sober spaces, so there is a unique continuous function 
\[
q_{(F, \phi)} \colon X \to Y^\downarrow
\] 
such that $\Omega(q_{(F, \phi)}) = \psi_{(F, \phi)}$.
The following theorem extends Gehrke and van Gool's result \cite[Theorem~5.7]{GehrkeGool2018} from distributive lattices to the dual of compact ordered spaces.

\begin{theorem}\label{thm:sheaf-repr-interp-bijection}
	Let $X, Y$ be compact ordered spaces. The assignment \[(F, \phi) \mapsto q_{(F, \phi)}\] yields a bijection between isomorphism classes of soft $\K$-sheaf representations of $X_*$ over $\K(Y^\uparrow)$ and interpolating decompositions of $X$ over $Y$.
\end{theorem}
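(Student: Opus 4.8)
The plan is to realise the stated assignment as a composite of three bijections, the substantive one being the order isomorphism of \cref{cor:MAIN}. Since $\CompOrd^\op$ is Barr-exact, applying \cref{cor:MAIN} to the object $A = X_*$ and the complete lattice $P = \K(Y^\uparrow)$ yields a bijection between $\llbracket \sKShr{X_*}(\K(Y^\uparrow),\CompOrd^\op)\rrbracket$ — the isomorphism classes of soft $\K$-sheaf representations of $X_*$ over $\K(Y^\uparrow)$ — and the elements of the poset $\cat{N}$ of those maps $\K(Y^\uparrow)^\op \to \Equiv{X_*}$ that preserve finite infima and arbitrary suprema and whose image consists of pairwise commuting equivalence relations. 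As $\Equiv{X_*}\cong\Omega(X)$ is a set, all the posets involved are small (cf.\ \cref{rem:well-powered}), so these are genuine bijections of sets.

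Next I would transport $\cat{N}$ along the two order isomorphisms already in play. The co-compact duality supplies a frame isomorphism $\Omega(Y^\downarrow)\cong\K(Y^\uparrow)^\op$, while Barr-exactness of $\CompOrd^\op$ gives $\Omega(X)\cong\Equiv{X_*}$; both are isomorphisms of complete lattices, hence preserve all meets and all joins. Reindexing along the former and reading the target through the latter, an element of $\cat{N}$ is exactly the datum of a frame homomorphism $\psi\colon\Omega(Y^\downarrow)\to\Omega(X)$ (a map between frames preserving finite infima and arbitrary suprema is precisely a frame homomorphism) whose image, viewed inside $\Equiv{X_*}$, consists of pairwise commuting equivalence relations. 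Because $Y^\downarrow$ is sober, such frame homomorphisms are in natural bijection with continuous functions $q\colon X\to Y^\downarrow$ via $\psi=\Omega(q)$. Under this identification $\psi$ becomes, after composing with $\Omega(X)\cong\Equiv{X_*}$, exactly the map $\psi_q\colon\Omega(Y^\downarrow)\to\Equiv{X_*}$ defined before the theorem, so the commutativity condition on the image of $\psi$ is precisely the condition that the image of $\psi_q$ consist of pairwise commuting equivalence relations.

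The final step is to recognise this last condition geometrically. By \cref{p:interpolating}, a continuous map $q\colon X\to Y^\downarrow$ has the property that any two equivalence relations in the image of $\psi_q$ commute if and only if $q$ is an interpolating decomposition of $X$ over $Y$. Chaining the three bijections then identifies the isomorphism classes of soft $\K$-sheaf representations of $X_*$ over $\K(Y^\uparrow)$ with the interpolating decompositions of $X$ over $Y$. To conclude that the composite bijection is the stated assignment, I would simply unwind the definitions preceding the theorem: the frame homomorphism attached in the first two steps to the class of $(F,\phi)$ is exactly $\psi_{(F,\phi)}$, and by definition $q_{(F,\phi)}$ is the unique continuous map with $\Omega(q_{(F,\phi)})=\psi_{(F,\phi)}$, which is the image of that frame homomorphism under the soberness bijection.

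I expect no serious obstacle, since the mathematical content has already been isolated: \cref{cor:MAIN} handles the sheaf-theoretic side, the spatiality of the sober space $Y^\downarrow$ converts frame homomorphisms into continuous maps, and \cref{p:interpolating} converts the commutativity condition into interpolation. The only genuine care required is bookkeeping the various order-duals — keeping track of the $(-)^\op$'s relating $\K(Y^\uparrow)$, $\Omega(Y^\downarrow)$, $\Quo{X_*}$ and $\Equiv{X_*}$ — and checking that the three bijections are mutually compatible, so that their composite is literally the map $(F,\phi)\mapsto q_{(F,\phi)}$ rather than merely some bijection with the same domain and codomain.
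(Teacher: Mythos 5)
Your proposal is correct and follows essentially the same route as the paper, which proves the theorem by combining \cref{cor:MAIN} (applied to $A=X_*$ and $P=\K(Y^\uparrow)$, with the identifications $\K(Y^\uparrow)^\op\cong\Omega(Y^\downarrow)$ and $\Equiv{X_*}\cong\Omega(X)$, plus sobriety to pass from frame homomorphisms to continuous maps) with \cref{p:interpolating}. The only difference is that you spell out the bookkeeping that the paper leaves implicit in the paragraph preceding the theorem statement.
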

\begin{proof}
	This follows from \cref{cor:MAIN} and \cref{p:interpolating}.
\end{proof}

Observe that, for every compact ordered space $X$, there is a fully faithful functor 
\[
\sKShr{X_*}(\K(Y^\uparrow),\CompOrd^\op) \hookrightarrow \sOmShr{X_*}(\Omega(Y^\uparrow),\CompOrd^\op)
\]
from soft $\K$-sheaf representations of $X_*$ over $\K(Y^\uparrow)$ to soft sheaf representations of $X_*$ over $Y^\uparrow$, and this is an equivalence provided that directed colimits commute with finite limits in $\CompOrd^\op$ (cf.~\cref{pr:inclusion-K-into-Omega} and \cref{equiv-Om-K-sheaves}). We do not know if the category $\CompOrd^\op$ satisfies the latter property and conjecture that it does not. 

Nevertheless, replacing $\K$-sheaves over $\K(Y^\uparrow)$ with ordinary sheaves over $Y^\uparrow$ in \cref{thm:sheaf-repr-interp-bijection}, we obtain an injective assignment from interpolating decompositions of $X$ over $Y$ into isomorphism classes of soft sheaf representations of $X_*$ over $Y^\uparrow$ (cf.\ \cref{rem:M-into-Omega-shvs}).
This allows us to construct soft sheaf representations for all objects of $\CompOrd^\op$:
\begin{proposition} 
	Let $X$ be a compact ordered space. Then $X_*$ admits a soft sheaf representation over the stably compact space $X^\uparrow$ induced by the interpolating decomposition $X\to X^\downarrow$ given by the identity function.
\end{proposition}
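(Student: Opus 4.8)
The plan is to verify that the identity function $\mathrm{id}_X\colon X\to X^\downarrow$ qualifies as an interpolating decomposition of $X$ over itself (in the sense of \cref{d:interpolating} with $Y=X$ and $q=\mathrm{id}_X$), and then to feed this into the injective assignment from interpolating decompositions to soft sheaf representations set up in the paragraph immediately preceding the statement (cf.\ \cref{rem:M-into-Omega-shvs} and \cref{thm:sheaf-repr-interp-bijection}). All of the substance is already packaged into those earlier results; the only genuinely new verification is the interpolation property, which is immediate.

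First I would check that $\mathrm{id}_X$ is continuous as a map $X\to X^\downarrow$: by definition the opens of $X^\downarrow$ are precisely the open downward-closed subsets of $X$, hence are open in $X$, so the topology of $X^\downarrow$ is coarser than that of $X$ and the identity is continuous. Next I would verify the interpolation condition. Given $x_1\leq x_2$ in $X$, I must exhibit $z\in X$ with $x_1\leq z\leq x_2$, $\mathrm{id}_X(x_1)\leq\mathrm{id}_X(z)$ and $\mathrm{id}_X(x_2)\leq\mathrm{id}_X(z)$. Taking $z\coloneqq x_2$ works at once: $x_1\leq x_2=z\leq x_2$, while $x_1\leq z$ holds by hypothesis and $x_2\leq z$ holds trivially. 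Hence $\mathrm{id}_X$ is an interpolating decomposition of $X$ over $X$.

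Finally I would convert this into the desired representation. By \cref{p:interpolating}, the equivalence relations in the image of $\psi_{\mathrm{id}_X}\colon\Omega(X^\downarrow)\to\Equiv{X_*}$ pairwise commute; being the composite of the frame homomorphism $\Omega(\mathrm{id}_X)$ with the order isomorphism $\Omega(X)\cong\Equiv{X_*}$, the map $\psi_{\mathrm{id}_X}$ preserves finite infima and arbitrary suprema. Transporting $\psi_{\mathrm{id}_X}$ along the composite isomorphism $\Omega(X^\downarrow)\cong\K(X^\uparrow)^\op\cong\Scott(\Omega(X^\uparrow))$---the first arrow being complementation and the second the Hofmann--Mislove isomorphism for the sober space $X^\uparrow$---yields a map $\Scott(\Omega(X^\uparrow))\to\Equiv{X_*}$ that preserves finite infima and arbitrary suprema and whose image consists of pairwise commuting equivalence relations. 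Since $\CompOrd^\op$ is Barr-exact and, $X^\uparrow$ being stably compact (cf.\ \cref{ex:stably-compact}), the lattice $\Omega(X^\uparrow)$ is stably continuous, \cref{rem:M-into-Omega-shvs} (applied with $L=\Omega(X^\uparrow)$ and $A=X_*$) produces a soft $\Omega$-sheaf representation of $X_*$ over $X^\uparrow$, as required. I expect no real analytic obstacle here: the interpolation check is trivial, so the only point demanding care is keeping this chain of order isomorphisms consistent and matching it to the hypotheses of \cref{rem:M-into-Omega-shvs}, which is purely bookkeeping.
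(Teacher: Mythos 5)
Your proposal is correct and follows essentially the same route the paper intends (the proposition is stated as an immediate consequence of the preceding paragraph): you verify that $\mathrm{id}_X\colon X\to X^\downarrow$ is a continuous interpolating decomposition---with the right witness $z=x_2$---and then invoke \cref{p:interpolating} together with the assignment from interpolating decompositions to soft sheaf representations built from \cref{thm:sheaf-repr-interp-bijection} and \cref{rem:M-into-Omega-shvs}. The bookkeeping with the isomorphisms $\Omega(X^\downarrow)\cong\K(X^\uparrow)^\op\cong\Scott(\Omega(X^\uparrow))$ is exactly as in the paper, so there is nothing to add.
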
 
Of course, the category $\CompOrd^\op$ can be replaced with any equivalent category $\cat{D}$ (e.g., following~\cite{Abbadini2019}, with an appropriate variety of infinitary algebras; cf.\ also~\cite{HofmannNevesEtAl2018}), thus obtaining soft sheaf representations of objects of $\cat{D}$.

\subsection{Commutative Gelfand rings}\label{s:comm-Gelfand-rings}
In this subsection, we shall assume that the reader is familiar with basic notions of point-free topology, see e.g.\ \cite{PP2012}.

Let $\CRing$ be the category of commutative rings with unit and ring homomorphisms preserving the unit, and fix an arbitrary $A\in\CRing$.
Denote by $X_A$ the Zariski spectrum of $A$ (that is, $X_A$ is the set of prime ideals of $A$ equipped with the Zariski or \emph{hull-kernel} topology). A classical result by Grothendieck~\cite{EGA1} states that $A$ is isomorphic to the ring of global sections of a sheaf $F\colon \Omega(X_A)^\op\to\CRing$ whose stalks are local rings. In fact, the stalk of $F$ at $\mathfrak{p}\in X_A$ is isomorphic to the localization $A_{\mathfrak{p}}$ of $A$ at~$\mathfrak{p}$. Note that this is \emph{not} a soft sheaf representation because the stalks of $F$ are not quotients of $A$ (cf.\ \cref{rem:soft-sh-repr-stalks-as-quotients}). In more detail, for all $\mathfrak{p}\in X_A$, the set 
\[
k_{\mathfrak{p}}\coloneqq \{U\in \Omega(X_A)\mid \mathfrak{p}\in U\}
\]
is a filter on $\Omega(X_A)$ and is Scott-open because $X_A$ is locally compact. Thus, the canonical colimit arrow $F(X_A)\to \colim_{U\in k_{\mathfrak{p}}}{F(U)}$ into the stalk of $F$ at $\mathfrak{p}$ can be identified with the localization map
\[
A \to A_{\mathfrak{p}}.
\]
The latter is an epimorphism but, in general, fails to be surjective (i.e., a regular epimorphism in the category $\CRing$).
\begin{remark}
	On the other hand, the \emph{Pierce representation} of a commutative ring $A$ \emph{is} a soft sheaf representation. In fact, it is induced by the monotone map \[\ID{E(A)} \to \ID{A},\] where $E(A)$ is the Boolean ring of idempotent elements of $A$, that sends an ideal $J$ of $E(A)$ to the ideal of $A$ generated by $J$. See e.g.\ \cite[\S V.2]{Johnstone1986}. In this case, $\ID{E(A)}$ can be identified with the frame of opens of the Boolean (i.e., compact, Hausdorff and zero-dimensional) space corresponding to $E(A)$ under Stone duality for Boolean algebras~\cite{Stone1936}. More generally, if $\cat{V}$ is a variety of finitary algebras whose signature contains a constant symbol, \emph{every} sheaf of $\cat{V}$-algebras over a Boolean space is soft, cf.\ \cite[Lemma~3.3]{Pierce1967}.
\end{remark}

However, Grothendieck's sheaf representation induces a soft sheaf representation for a smaller class of commutative rings as we shall now explain. For any $A\in\CRing$, the frame $\RID{A}$ of \emph{radical} ideals\footnote{An ideal is \emph{radical} if it is the intersection of all prime ideals in which it is contained.} of~$A$ (ordered by inclusion) is compact and \emph{coherent}, i.e.\ the subset of $\RID{A}$ consisting of the compact elements forms a join-dense sublattice. 
Assuming the Prime Ideal Theorem, it can be proved that $\RID{A}$ is a spatial frame isomorphic to $\Omega(X_A)$. This observation was exploited by Banaschewski~\cite{Banaschewki2004} to give a point-free version of Grothendieck's sheaf representation of $A$, replacing the spatial frame $\Omega(X_A)$ with~$\RID{A}$. 

Now, recall that a \emph{(commutative) Gelfand ring} is a (commutative) ring with unit satisfying the following condition:
\[
\forall x,y. \, (x+y =1 \ \Longrightarrow \ \exists a,b. \, (1+ xa) (1 + yb) = 1).
\]
Again assuming the Prime Ideal Theorem, commutative Gelfand rings are exactly the commutative rings with unit in which every prime ideal is contained in a unique maximal ideal.
Even in the absence of the Prime Ideal Theorem, we have that a commutative ring $A$ with unit is a commutative Gelfand ring precisely when the frame $\RID{A}$ is normal \cite[Proposition~1]{Banaschewki2004} (recall that a frame $L$ is \emph{normal} if, for all $g,h \in L$ such that $g \lor h = \top$, there are $u,v \in L$ such that $u \lor g = v \lor h = \top$ and $u \wedge v = \bot$). In that case, $\RID{A}$ retracts onto its compact regular subframe $\JRID{A}$ consisting of the \emph{Jacobson radical ideals}, i.e.\ those ideals $J$ such that, for every $a \in A$, if $1+ra$ is invertible modulo $J$ for all $r \in A$, then $a \in J$. See \cite[Lemma~1 and p.~27]{Banaschewki2004}. Since $\JRID{A}$ is a compact regular frame, in view of the following remark there is an isomorphism $\Scott(\JRID{A}) \cong \JRID{A}$. 

\begin{remark}\label{rem:Lawson-dual-of-kreg-frame}
	If $L$ is a compact regular frame, then $\Scott(L)\cong L$. An explicit isomorphism is given by
	\[
	L \to \Scott(L), \ \ x\mapsto \{y\in L\mid x\vee y = 1\},
	\]
	whose inverse sends $k\in\Scott(L)$ to $\sup \{x \in L \mid \exists y \in k. \ x \land y = 0\}$.
	In the particular case of spatial compact regular frames (recall that, assuming the Axiom of Choice, every compact regular frame is spatial, see e.g.\ \cite[Proposition~III.1.10]{Johnstone1986}), this reduces to the observation that a compact Hausdorff topology coincides with its \emph{patch topology}.
\end{remark}

The category $\CRing$ is a well-powered, bicomplete Barr-exact category in which directed colimits commute with finite limits. 
Hence, by \cref{cor:pos-refl-omega} and \cref{rem:well-powered-sheaf-rep}, there is an order isomorphism of small posets
\[
\cat{N}\cong\llbracket \sOmShr{A}(\JRID{A},\CRing) \rrbracket
\]
where $\cat{N}$ consists of the maps $\JRID{A}\to\ID{A}$ preserving finite infima and arbitrary suprema (just observe that any two congruences on~$A$ commute). Because $\CRing$ has no proper subterminal objects, it follows from \cref{rem:Omega-sheaves-vs-sheaves-caveat} that $\CRing$-valued (soft) $\Omega$-sheaves coincide with ordinary (soft) sheaves. Therefore the inclusion
\[
\JRID{A} \hookrightarrow \ID{A},
\]
which preserves finite infima and arbitrary suprema because so does $\RID{A}\hookrightarrow \ID{A}$, induces a soft sheaf representation of the commutative Gelfand ring $A$ over the compact regular frame $\JRID{A}$. This sheaf representation was first obtained by Banaschewski and Vermeulen~\cite{BV2011}, improving on results of Mulvey~\cite{Mulvey1979} and \mbox{Banaschewski~\cite{Banaschewksi2000,Banaschewki2004}.}

If the Prime Ideal Theorem is assumed, for any commutative Gelfand ring $A$ the frame $\JRID{A}$ can be identified with $\Omega(\Max{A})$, where $\Max{A}$ is the subspace of $X_A$ consisting of the maximal ideals of $A$ (i.e., the closed points of $X_A$). The space $\Max{A}$ is compact and Hausdorff, and in view of the previous paragraph $A$ is isomorphic to the ring of global sections of a soft sheaf $F\colon \Omega(\Max{A})^\op\to\CRing$ (this sheaf representation can also be derived as a special case of~\cite[Corollary~3.11]{GehrkeGool2018}, see p.~2178 in \emph{op.\ cit.}). 

Note that, in contrast with the case of arbitrary commutative rings, for Gelfand rings we get a \emph{soft} sheaf representation. In fact, the stalk of $F$ at a maximal ideal $\mathfrak{m}\in\Max{A}$ is isomorphic to the quotient ring $A/O_{\mathfrak{m}}$, where the ideal $O_{\mathfrak{m}}$ is defined by 
\[
O_{\mathfrak{m}}\coloneqq \{a\in A\mid \exists b\in A\setminus \mathfrak{m} \ \text{such that} \ ab=0\}.
\]
See e.g.\ \cite[Lemma~V.3.8]{Johnstone1986}. The unique maximal ideal of $A$ containing $O_{\mathfrak{m}}$ is $\mathfrak{m}$, hence $A/O_{\mathfrak{m}}$ is a local ring (equivalently, note that $A/O_{\mathfrak{m}}\cong A_{\mathfrak{m}}$). The canonical colimit arrow $F(\Max{A})\to \colim_{x\in U}{F(U)}$ can then be identified with the quotient map $A\to A/O_{\mathfrak{m}}$, which is a regular epimorphism. This shows that every local section over a point of $\Max{A}$ can be extended to a global section. A similar argument shows that every local section defined on a closed subset of $\Max{A}$ can be extended to a global section, i.e.\ $F$ is soft.

\subsection*{Acknowledgements}
We are grateful to Pino Rosolini for a number of valuable comments on a preliminary version of this work, and to the anonymous referee for further suggestions that allowed us to improve the presentation of our results.

\bibliographystyle{plain}

\end{document}